\documentclass[a4paper,11pt]{article}

\usepackage{amsmath,amssymb,amsthm,bm,mathtools,mathrsfs}
\usepackage{enumitem}
\usepackage[margin=30mm]{geometry}
\usepackage{graphicx,xcolor}
\usepackage{float}
\usepackage{amsmath,amssymb,bm,amsthm}
\usepackage{ascmac}
\usepackage{array}
\usepackage[hang,small,bf]{caption}
\usepackage[subrefformat=parens]{subcaption}
\usepackage{natbib}
\usepackage{hyperref}
\hypersetup{
  colorlinks=true,
	citecolor=blue,
	linkcolor=red,
	urlcolor=orange,
}

\numberwithin{equation}{section}
\allowdisplaybreaks[3]

\newtheorem{theorem}{Theorem}[section]
\newtheorem{proposition}[theorem]{Proposition}
\newtheorem{lemma}[theorem]{Lemma}
\newtheorem{corollary}[theorem]{Corollary}

\theoremstyle{definition}
\newtheorem{definition}[theorem]{Definition}
\theoremstyle{remark}
\newtheorem{remark}[theorem]{Remark}

\newcommand{\Cov}{\operatorname{Cov}}

\title{Finite-Dimensional Recursions for Small-Noise Expansions in Nonlinear Filtering}
\author{Masahiro Kurisaki}
\date{}

\begin{document}
\maketitle

\begin{abstract}
This paper provides a recursive formula for computing the
coefficients in a small-system-noise asymptotic expansion for nonlinear
filtering. The expansion, obtained from the Kallianpur--Striebel formula, was
justified in the author's previous work. Our main contribution is to reduce
the coefficient calculation to a finite-dimensional system extending the
Kalman--Bucy filter by applying Fubini's theorem and Wick's formula and
differentiating the resulting terms. For each fixed expansion order, the number of variables
grows at most polynomially, rather than exponentially, with the system
dimension. To justify the construction, we define the required non-adapted
integrals as limits of discrete sums and establish a generalized It\^o
formula. We also extend the expansion from conditional expectations to
conditional characteristic functions and provide a numerical illustration of
the method.
\end{abstract}

\noindent\textbf{Keywords:} nonlinear filtering; asymptotic expansion;
Kallianpur--Striebel formula; non-adapted stochastic integral; recursive
formula; Kalman--Bucy filter.

\section{Introduction}\label{section:introduction}

Estimating hidden states from noisy observations is a fundamental problem in
signal processing, control theory, data assimilation, finance, and many other
fields. In continuous-time filtering, an unobserved stochastic process
\(\{X_t\}_{t\geq0}\) is inferred from the data generated by a related
observation process \(\{Y_t\}_{t\geq0}\). The principal objects are the
conditional expectations
\begin{align*}
  E[f(X_t)\mid\mathcal Y_t],
\end{align*}
where \(\mathcal Y_t\) is the \(\sigma\)-field generated by the observations up
to time \(t\). For linear Gaussian models, these quantities are determined by
the finite-dimensional Kalman--Bucy equations
\citep{kalman1960,kalman1961,bain_crisan2009filtering}.

For a general nonlinear model, the conditional distribution instead satisfies
the nonlinear filtering equation of Stratonovich and Kushner
\citep{stratonovich1960,kushner1964} or the
equivalent Zakai equation \citep{Zakai1969}. These equations are
infinite-dimensional in general, and finite-dimensional nonlinear filters
exist only under restrictive structures
\citep{HAZEWINKEL1983331,bain_crisan2009filtering}. A wide variety of
approximations have therefore been developed. They can be broadly divided into
the following three categories.

\textbf{(i) Linearization and assumed-density methods.}
The extended Kalman filter \citep{picard1986,picard1991}, the ensemble Kalman
filter \citep{2003OcDyn..53..343E}, the unscented Kalman filter
\citep{882463}, Gaussian assumed-density filters \citep{847749}, and
projection filters \citep{Brigo1995,armstrong2019optimal} replace the original
filter by a tractable finite-dimensional approximation. These methods are
often computationally efficient, but their accuracy relies on the quality of
the imposed linearization, Gaussian approximation, or finite-dimensional
projection.

\textbf{(ii) PDE and chaos-based methods.}
Finite-difference \citep{Gyongy2003}, finite-element
\citep{Germani01021988}, spectral, and Wiener-chaos methods
\citep{lototsky2011chaos} approximate the Zakai equation or its random-field
solution directly. They provide systematic approximation frameworks, but
standard tensor-product discretizations require a number of grid points or
chaos modes that grows exponentially with the dimension.

\textbf{(iii) Simulation-based methods.}
Particle filters \citep{gordon1993,4378823,6530707} approximate the posterior
distribution by weighted particles and resampling. They are flexible and
apply to strongly nonlinear and non-Gaussian models. On the other hand, they
are affected by weight degeneracy, and in standard high-dimensional settings
the ensemble size required to prevent collapse can grow exponentially with an
effective dimension \citep{snyder2008obstacles}.

These approaches exhibit a familiar trade-off: tractability is often obtained
through a strong approximation of the posterior, whereas methods retaining a
general posterior law can become prohibitively expensive as the dimension
increases. 

Small-noise expansions offer a different compromise. In our
previous work \citep{kurisaki2026ks}, we considered
\begin{align*}
  dX_t^\epsilon
    &={}\alpha(X_t^\epsilon)\,dt
      +\epsilon\beta(X_t^\epsilon)\,dV_t,\\
  dY_t^\epsilon
    &={}h(X_t^\epsilon)\,dt+\sigma(t)\,dW_t,
\end{align*}
and established an asymptotic expansion of $E[X_t^\epsilon|\mathcal{Y}_t^\epsilon]$ with respect to small $\epsilon$, where $\{\mathcal{Y}_t^\epsilon\}_{t\geq 0}$ is the filtration generated by $Y^\epsilon$, and provided a uniform-in-time probabilistic estimate of the truncation remainder. However, its coefficients are expressed as conditional expectations of polynomial functionals and iterated observation integrals, and the construction of an explicit recursive procedure for evaluating them was left to subsequent
work.

The present paper supplies that procedure and is therefore the computational
continuation of \citet{kurisaki2026ks}. In this work, we prove that the calculation of the expansion coefficients can be reduced to a finite-dimensional stochastic differential equation driven by the observation $Y^\epsilon$, and provide an algorithm for constructing this equation. The resulting procedure has two main features:
\begin{itemize}
  \item it systematically incorporates higher-order corrections beyond the leading Gaussian approximation.
  \item for fixed expansion order and representation structure, the dimension of the equation grows at most polynomially in the state, observation, and driving-noise dimensions.
\end{itemize}

The main technical difficulty is that applying conditional Fubini's theorem and Wick's formula produces integrands that depend on observations beyond the integration time. These integrands are therefore non-adapted, and deriving recursive equations requires a generalized It\^o rule for differentiating the resulting observation integrals with respect to the terminal observation time.

We resolve these points by first defining the relevant objects through discrete sums and proving
their \(L^2\)-convergence. In order to prove the convergence, the structure of the smoothing distribution in the linear model given in \citet{KURISAKI2026104997} is crucial. The desired convergence and the generalized It\^o formula are then proved simultaneously by induction using this structure.

The same finite closure also applies beyond conditional means. We show that
the conditional characteristic function is, up to any fixed order in
\(\epsilon\), the characteristic function of the leading Gaussian
approximation multiplied by a finite polynomial. Thus the conditional
distribution can be reconstructed in a finite Gaussian--Hermite, or
Edgeworth-type, form whenever the corresponding Fourier inversion is
justified. 

A numerical experiment for an A\"{\i}t--Sahalia-type model with
nonlinear drift, state-dependent diffusion, and nonlinear observation
illustrates that the higher-order corrections improve the first-order
approximation.

The remainder of the paper is organized as follows.
Section~\ref{section:preliminaries} recalls the Kallianpur--Striebel expansion
and the Gaussian smoothing structure used throughout the paper.
Section~\ref{section:1dim-example} demonstrates the recursive calculation in a
one-dimensional model. Section~\ref{section:generalized-calculus} constructs
the non-adapted integrals, proves the generalized It\^o formula, and establishes
finite closure and polynomial dimensional growth.
Section~\ref{section:expansion-of-distribution} derives the finite
Gaussian--polynomial representation of the conditional characteristic
function. Section~\ref{section:numerical-illustration} presents the numerical
example, and Section~\ref{section:discussion} concludes the paper.

  \section{Problem setup and preliminary results}\label{section:preliminaries}
  \subsection{Setup}
Let $(\Omega,\mathcal{F},\{\mathcal{F}_t\}_{t\geq 0},P)$ be a filtered probability space, and let $\{X_{t}^\epsilon\}_{t\geq 0}$ and $\{Y_t^\epsilon\}$ be $d_1$- and $d_2$-dimensional processes, respectively, satisfying
\begin{align}
  \label{eq-eta}&dX_t^\epsilon=\alpha(X_t^\epsilon)dt+\epsilon \beta(X_t^\epsilon)dV_t,~~X_0=x_0,\\
  \label{eq-Y-theta}&dY_t^\epsilon=h(X_t^\epsilon)dt+\sigma(t)dW_t,~~Y_0=0.
\end{align}
where $\alpha:\mathbb{R}^{d_1}\to \mathbb{R}^{d_1}$, $\beta:\mathbb{R}^{d_1}\to M_{d_1,m_1}(\mathbb{R})$ and $h:\mathbb{R}^{d_1}\to \mathbb{R}^{d_2}$ are of class $C^\infty$, and $x_0 \in \mathbb{R}^{d_1}$ is a constant. 

Furthermore, we assume that for any $k \in \mathbb{Z}_+$
\begin{align*}
  \sup_{x \in \mathbb{R}^{d_1}}|\partial^k\alpha(x)|+\sup_{x \in \mathbb{R}^{d_1}}|\partial^k\beta(x)|<\infty,
\end{align*}
and there exist constants $C$ and $q$ such that
\begin{align}
  \label{eq-assumption-h}\sup_{x \in \mathbb{R}^{d_1}}|\partial^kh(x)|\leq C(1+|x|^q).
\end{align}
Here, for $\phi \in C^k(\mathbb{R}^m; \mathbb{R}^n)$, the $k$-th derivative $\partial^k \phi(x)$ is regarded as an $\mathbb{R}^n$-valued $k$-tensor whose $(i_1,\ldots,i_k)$-th component is given by
  \begin{align*}
      \frac{\partial}{\partial x_{i_1}} \cdots \frac{\partial}{\partial x_{i_k}} \phi(x_1, \cdots, x_m).
  \end{align*}  
Furthermore, assume that for \( t \geq 0 \),
\begin{align}
  \int_0^t |\sigma(s)|^2 \, ds < \infty,
\end{align}
and that there exists a constant \( C > 0 \) such that for every \( t \geq 0 \),
\begin{align}
  \label{eq-assumption-sigma-2} \lambda_{\min}(\sigma(t) \sigma(t)^\top) > C,
\end{align}
where \( \lambda_{\min}(\sigma(t) \sigma(t)^\top) \) denotes the smallest eigenvalue of \( \sigma(t) \sigma(t)^\top \).
Here, $|A|$ denotes the Frobenius norm for a matrix $A$.

  Under this setup, we consider the asymptotic expansion of 
  \begin{align}
    \label{eq-conditional-expectation}E[f(X_t^\epsilon)|\mathcal{Y}_t^\epsilon]
  \end{align}
  where $\{\mathcal{Y}_t^\epsilon\}_{t\geq 0}$ is the filtration generated by $\{Y_t^\epsilon\}_{t\geq 0}$ augmented by null sets, and $f:\mathbb{R}^{d_1}\to \mathbb{R}$ is a polynomial.

  \subsection{Kallianpur--Striebel formula}
  We use the Kallianpur--Striebel formula to expand
  \eqref{eq-conditional-expectation}. It gives an explicit expression for the
  conditional expectation.

  Let us fix $T>0$, and define a probability measure $Q^\epsilon$ by
\begin{align*}
  Q^\epsilon(A) = E\left[ 1_A (Z_T^\epsilon)^{-1} \right]
\end{align*}
for \( A \in \mathcal{F} \), where
\begin{align*}
  \begin{split}
    Z_t^\epsilon &= \exp\left( \int_0^t h(X_s^\epsilon)^\top (\sigma(s)\sigma(s)^\top)^{-1} \sigma(s) \, dW_s \right. \\
    &\quad \left. + \frac{1}{2} \int_0^t h(X_s^\epsilon)^\top (\sigma(s)\sigma(s)^\top)^{-1} h(X_s^\epsilon) \, ds \right) \\    
    &= \exp\left( \int_0^t h(X_s^\epsilon)^\top (\sigma(s)\sigma(s)^\top)^{-1} \, dY_s^\epsilon \right. \\
    &\quad \left. - \frac{1}{2} \int_0^t h(X_s^\epsilon)^\top (\sigma(s)\sigma(s)^\top)^{-1} h(X_s^\epsilon) \, ds \right).
  \end{split}
\end{align*}
Then, the conditional expectation (\ref{eq-conditional-expectation}) can be represented by the following Kallianpur-Striebel formula.
\begin{proposition}[Kallianpur-Striebel formula]
  For any $0\leq t\leq T$ and any polynomial $f:\mathbb{R}^{d_1}\to \mathbb{R}$, we have
  \begin{align}
  \label{eq-Kallianpur-Striebel-epsilon}
  E[f(X_t^\epsilon)|\mathcal{Y}_t^\epsilon] =
  \frac{\displaystyle E_{Q^\epsilon}\left[ f(X_t^\epsilon) Z_t^\epsilon \middle| \mathcal{Y}_t^\epsilon \right]}
       {\displaystyle E_{Q^\epsilon}\left[ Z_t^\epsilon \middle| \mathcal{Y}_t^\epsilon \right]},
\end{align}
where $E_{Q^\epsilon}$ is the expectation under $Q^\epsilon$.
\end{proposition}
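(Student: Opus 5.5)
The plan is the standard change-of-measure argument. First we make sure \(Q^\epsilon\) is a probability measure and that, under \(Q^\epsilon\), the process \(\tilde W_t := \int_0^t \sigma(s)^{-1}\text{-type whitening of } dY^\epsilon_s\) is a Brownian motion independent of \(X^\epsilon\). Concretely, set \(\theta_s := -\sigma(s)^\top(\sigma(s)\sigma(s)^\top)^{-1}h(X_s^\epsilon)\). Then \((Z_t^\epsilon)^{-1}=\mathcal E\bigl(\int_0^\cdot \theta_s^\top dW_s\bigr)_t\).

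To show that this stochastic exponential is a true martingale on \([0,T]\), we use a localization argument rather than Novikov's condition directly, since \(h\) may grow polynomially. The point is that \(X^\epsilon\) does not depend on \(W\): the coefficients \(\alpha,\beta\) are bounded with bounded derivatives, so \(X^\epsilon\) has all moments bounded on \([0,T]\), and \(|\theta_s|\le C(1+|X_s^\epsilon|^{q})\) by \eqref{eq-assumption-h} and \eqref{eq-assumption-sigma-2}. We condition on the \(\sigma\)-field generated by \((x_0,V)\), using independence of \(V\) and \(W\); implicitly we assume \(W\) is independent of \(V\), which is the standing filtering convention. Under this conditioning, \(\theta\) is a deterministic square-integrable path, so the exponential has conditional expectation \(1\). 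Hence \(E[(Z_T^\epsilon)^{-1}]=1\), and the process \(t\mapsto E[(Z_T^\epsilon)^{-1}\mid\mathcal F_t]=(Z_t^\epsilon)^{-1}\) is a martingale. This martingale property is the step I expect to be the main technical obstacle, and the conditioning-on-\(V\) trick is how I would resolve it.

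Next we apply Girsanov's theorem. Under \(Q^\epsilon\), the process \(\bar W_t = W_t - \int_0^t\theta_s ds\) is a Brownian motion, and \(dY^\epsilon_t=\sigma(t)d\bar W_t\). Moreover, the law of \(V\) is unchanged. Independence of \(Y^\epsilon\) and \(X^\epsilon\) under \(Q^\epsilon\) follows by the same conditioning argument: given \(V\), the Girsanov shift turns \(\sigma\,dW+h\,dt\) into \(\sigma\,d\bar W\), whose conditional law does not depend on \(V\). Alternatively, one can compute joint characteristic functionals.

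Finally we run the abstract Bayes formula. For \(\mathcal Y^\epsilon_t\)-measurable bounded \(G\), we have \(dP/dQ^\epsilon|_{\mathcal F_T}=Z_T^\epsilon\), and \(E_{Q^\epsilon}[Z_T^\epsilon\mid\mathcal F_t]=Z_t^\epsilon\) by the martingale property above. Hence
\begin{align*}
E[f(X_t^\epsilon)G]=E_{Q^\epsilon}[f(X_t^\epsilon)Z_t^\epsilon G]=E_{Q^\epsilon}\bigl[E_{Q^\epsilon}[f(X_t^\epsilon)Z_t^\epsilon\mid\mathcal Y_t^\epsilon]G\bigr].
\end{align*}
Taking \(f\equiv1\) identifies \(E_{Q^\epsilon}[Z_t^\epsilon\mid\mathcal Y_t^\epsilon]\) as the density of \(P\) relative to \(Q^\epsilon\) on \(\mathcal Y_t^\epsilon\). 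This density is strictly positive, and dividing gives \eqref{eq-Kallianpur-Striebel-epsilon}.

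Integrability of \(f(X_t^\epsilon)Z_t^\epsilon\) under \(Q^\epsilon\) is needed to justify these steps. It reduces to \(E[|f(X_t^\epsilon)|]<\infty\), which holds because \(f\) is a polynomial and \(X^\epsilon\) has moments of all orders. The non-bounded \(f\) case is handled by truncating \(f\) and using monotone or dominated convergence for conditional expectations.
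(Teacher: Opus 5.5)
Your argument is correct. It is the standard change-of-measure proof of the Kallianpur--Striebel formula: you show $(Z^\epsilon)^{-1}$ is a $P$-martingale by conditioning on $V$, then use $E_{Q^\epsilon}[Z_T^\epsilon\mid\mathcal F_t]=Z_t^\epsilon$ and the abstract Bayes formula. The paper states the formula as a classical result and gives no proof.

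Only the martingale property of $(Z^\epsilon)^{-1}$ and the Bayes step are needed for the identity as stated. The Girsanov description of $Y^\epsilon$ and the independence of $X^\epsilon$ and $Y^\epsilon$ under $Q^\epsilon$ are never used. No truncation of $f$ is needed either, since you already have $E_{Q^\epsilon}[|f(X_t^\epsilon)|Z_t^\epsilon]=E[|f(X_t^\epsilon)|]<\infty$.
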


\subsection{Expansion of the system}\label{section:expansion-of-system}
In order to consider the expansion of the quotient (\ref{eq-Kallianpur-Striebel-epsilon}), let us recall the differentiation of the system process $\{X_t^\epsilon\}_{t\geq 0}$ with respect to $\epsilon$.

  \begin{proposition}\label{prop-expansion-of-SDE}
  For any fixed $t>0$ and $p \ge 1$, the mapping $\epsilon \to \{X_s^\epsilon\}_{0\leq s\leq t}$ is of class $C^\infty$ in the sense of $L^p(\Omega,C([0,t];\mathbb{R}^{d_1}))$, and the derivatives $X_t^{[k],\epsilon} := \partial_\epsilon^k X_t^\epsilon~(k\geq 1)$ satisfy the stochastic differential equation obtained by formal differentiation of \eqref{eq-eta}:
 \begin{align*}
  &dX_t^{[k],\epsilon}=\sum_{j=1}^k \sum_{(i_1,\cdots,i_j)\in \Lambda(k,j)}\nu(k,(i_1,\cdots,i_j))\partial^{j}\alpha(X_t^\epsilon)[X_t^{[i_1],\epsilon}\otimes \cdots \otimes X_t^{[i_j],\epsilon}]dt\\
  &+\epsilon\sum_{j=1}^k \sum_{(i_1,\cdots,i_j)\in \Lambda(k,j)}\nu(k,(i_1,\cdots,i_j))\partial^{j}\beta(X_t^\epsilon)[X_t^{[i_1],\epsilon}\otimes \cdots \otimes X_t^{[i_j],\epsilon}]dV_t\\
  &+k\sum_{j=1}^{k-1} \sum_{(i_1,\cdots,i_j)\in \Lambda(k-1,j)}\nu(k-1,(i_1,\cdots,i_j))\partial^{j}\beta(X_t^\epsilon)[X_t^{[i_1],\epsilon}\otimes \cdots \otimes X_t^{[i_j],\epsilon}]dV_t.
\end{align*}
with $X_0^{[k],\epsilon}=0$. Here,
\begin{align*}
  \Lambda(k,j)=\left\{ (i_1,\cdots,i_j)\in \mathbb{N}^j;i_1\leq \cdots \leq i_j,~i_1+\cdots+i_j=k  \right\},
\end{align*}
and $\nu(k,(i_1,\cdots,i_j))$ represents the number of partitions of the set $\{1,\cdots,k\}$ into $j$ subsets with $(i_1,\cdots,i_j)$ elements, and 
\begin{align*}
  &\partial^{j}\alpha(x_1,\cdots,x_{d_1})[\xi^1\otimes \cdots \otimes \xi^j]\\
  =&\sum_{i_1,\cdots,i_j=1}^{d_1}\frac{\partial}{\partial x_{i_1}} \cdots \frac{\partial}{\partial x_{i_j}} \alpha(x_1, \cdots, x_{d_1})\xi_{i_1}^1\cdots \xi_{i_j}^j
\end{align*}
for $\xi^{i}=(\xi_1^i,\cdots,\xi_{d_1}^i)~(i=1,\cdots,j)$.
\end{proposition}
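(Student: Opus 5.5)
The proof follows the classical theory of differentiability of SDE solutions with respect to a parameter. The argument runs by induction on $k$, using the $L^p$ moment estimates for SDEs with Lipschitz, bounded-derivative coefficients. The parameter $\epsilon$ enters only through the diffusion term $\epsilon\beta(x)$.

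First, enlarge the state so the parameter becomes part of a standard SDE: consider $(X_t^\epsilon,\epsilon)$, or equivalently view the coefficient $b(\epsilon,x)=\epsilon\beta(x)$ as smooth in $(\epsilon,x)$. Its derivatives of every order are bounded on $\{|\epsilon|\le \epsilon_0\}\times\mathbb{R}^{d_1}$, because $\partial^k\beta$ is bounded for all $k$. The standard result (Kunita's theorem on smooth dependence of SDE solutions on parameters) then gives that $\epsilon\mapsto X^\epsilon$ is $C^\infty$ in $L^p(\Omega,C([0,t]))$. To make this self-contained, the plan is as follows.

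(i) Prove the $L^p$-Lipschitz estimate $E\sup_{s\le t}|X_s^\epsilon-X_s^{\epsilon'}|^p\le C|\epsilon-\epsilon'|^p$, using the Burkholder--Davis--Gundy inequality and Gronwall's lemma.

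(ii) Define the candidate derivative $X^{[1],\epsilon}$ as the solution of the linear SDE obtained by formal differentiation, $dX^{[1]}=\partial\alpha(X)[X^{[1]}]dt+\epsilon\partial\beta(X)[X^{[1]}]dV+\beta(X)dV$. Then show that the difference quotient $R^h=(X^{\epsilon+h}-X^\epsilon)/h-X^{[1],\epsilon}$ satisfies a linear SDE whose inhomogeneity is bounded via Taylor remainders by $C|X^{\epsilon+h}-X^\epsilon|^2/|h|+|X^{\epsilon+h}-X^\epsilon|$. By step (i) this inhomogeneity is $O(|h|)$ in $L^p$, so Gronwall and BDG give $R^h\to0$ in $L^p(C)$.

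(iii) Iterate. Assume $X^{[1]},\dots,X^{[k-1]}$ exist and satisfy the stated equations. The equation for $X^{[k-1]}$ is linear in $X^{[k-1]}$, with coefficients that are smooth functions of $(\epsilon,X^\epsilon,X^{[1]},\dots,X^{[k-2]})$ having polynomially bounded derivatives. Augment the system with these lower derivatives and repeat the argument of step (ii). The a priori moment bounds $\sup_{|\epsilon|\le\epsilon_0}E\sup_s|X_s^{[j],\epsilon}|^p<\infty$ follow inductively. This is because each equation is linear in the highest derivative, with bounded coefficient $\partial\alpha,\partial\beta$, and has a forcing that is a polynomial in lower derivatives.

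It remains to identify the coefficients. Formally differentiating $\alpha(X^\epsilon)$ $k$ times gives Faà di Bruno's formula: $\partial_\epsilon^k\alpha(X^\epsilon)=\sum_{\pi}\partial^{|\pi|}\alpha(X)[\bigotimes_{B\in\pi}X^{[|B|]}]$, where $\pi$ ranges over set partitions of $\{1,\dots,k\}$. Grouping partitions by their sorted block sizes $(i_1\le\dots\le i_j)$ produces exactly the multiplicity $\nu(k,(i_1,\dots,i_j))$, using the symmetry of $\partial^j\alpha$. For $\epsilon\beta(X^\epsilon)$, apply the Leibniz rule: $\partial_\epsilon^k(\epsilon g(\epsilon))=\epsilon g^{(k)}+k g^{(k-1)}$ with $g=\beta(X^\epsilon)$. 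Applying Faà di Bruno to $g^{(k)}$ and $g^{(k-1)}$ gives the second and third lines of the stated equation. Note that in the third line $j$ runs only up to $k-1$, since $\Lambda(k-1,j)$ is empty for $j>k-1$. Interchanging $\partial_\epsilon$ with the stochastic integral is justified by the $L^p$ convergence from step (iii) together with BDG.

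The main obstacle is step (iii): controlling the Taylor remainders uniformly when the coefficients of the augmented system have polynomial rather than bounded growth in the lower-order derivatives. I would handle this with Hölder's inequality combined with the uniform moment bounds on all $X^{[j]}$ for $j<k$. This works because the highest-order derivative always appears linearly, multiplied by the bounded factors $\partial\alpha(X^\epsilon)$ and $\epsilon\,\partial\beta(X^\epsilon)$, so Gronwall still closes.
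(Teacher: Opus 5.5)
Your proposal is correct and follows the paper's route. The paper gives no argument of its own beyond citing Kunita's theorem on smooth dependence of SDE solutions on a parameter (Theorem~3.3.2 of Kunita's book) and Proposition~3.1 of the earlier paper; your inductive $L^p$ difference-quotient argument and your Leibniz/Fa\`a di Bruno identification of the coefficients are what those citations supply. One caveat, about the statement rather than your argument: for $k=1$ your term $k g^{(k-1)}=\beta(X_t^\epsilon)$ is a $j=0$ term that the literal sum $\sum_{j=1}^{k-1}$ omits, so the third line must be read with the convention $\Lambda(0,0)=\{\emptyset\}$, $\partial^0\beta=\beta$. This is the form you correctly use in step (ii), and it matches the paper's own equation for $X^{[1]}$.
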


For a proof, see \citet[Proposition~3.1]{kurisaki2026ks} and
\citet[Theorem~3.3.2]{kunita2019stochastic}.

Let us write $X_t^{[k]}=X_t^{[k],0}$. Then, $X^{[0]}$ is the solution of the deterministic equation 
\begin{align*}
  \frac{dX_t^{[0]}}{dt}=\alpha(X_t^{[0]}),~~X_0^{[0]}=x_0,
\end{align*}
and $X^{[1]}$ is a Gaussian process given by
\begin{align}
  \label{eq:X1}dX_t^{[1]}=\partial\alpha(X_t^{[0]})X_t^{[1]}dt+\beta(X_t^{[0]})dV_t.
\end{align}
In general, $X_t^{[p]}~(p\geq 2)$ is the solution of
\begin{align*}
  dX_t^{[p]}=&\partial\alpha(X_t^{[0]})[X_t^{[p]}]dt\\
  &+\sum_{j=2}^p \sum_{(i_1,\cdots,i_j)\in \Lambda(p,j)}\nu(p,(i_1,\cdots,i_j))\partial^{j}\alpha(X_t^{[0]})[X_t^{[i_1]}\otimes \cdots \otimes X_t^{[i_j]}]dt\\
  &+p\sum_{j=1}^{p-1} \sum_{(i_1,\cdots,i_j)\in \Lambda(p-1,j)}\nu(p-1,(i_1,\cdots,i_j))\partial^{j}\beta(X_t^{[0]})[X_t^{[i_1]}\otimes \cdots \otimes X_t^{[i_j]}]dV_t,
\end{align*}
and thus it can be written as 
\begin{align}
  \label{eq:Xp}
  \begin{split}
    &X_t^{[p]}=\sum_{j=2}^p \sum_{(i_1,\cdots,i_j)\in \Lambda(p,j)}\nu(p,(i_1,\cdots,i_j))\int_0^t \Phi(s,t)\partial^{j}\alpha(X_s^{[0]})[X_s^{[i_1]}\otimes \cdots \otimes X_s^{[i_j]}]ds\\
  &+p\sum_{j=1}^{p-1} \sum_{(i_1,\cdots,i_j)\in \Lambda(p-1,j)}\nu(p-1,(i_1,\cdots,i_j))\int_0^t \Phi(s,t)\partial^{j}\beta(X_s^{[0]})[X_s^{[i_1]}\otimes \cdots \otimes X_s^{[i_j]}]dV_s,
  \end{split}  
\end{align}
where $\Phi(s,t)$ is the solution of the $d_1\times d_1$-matrix equation 
\begin{align*}
  \frac{\partial}{\partial t}\Phi(s,t)
  =\partial\alpha(X_t^{[0]})\Phi(s,t),~~\Phi(s,s)=I_{d_1}.
\end{align*}

In particular, it is important to observe that $X^{[p]}~(p\geq 2)$ can be expressed as a polynomial functional of $(X^{[1]},V)$. For example, the second and third derivatives are
\begin{align}
  X_t^{[2]}=&\int_0^t \Phi(s,t)\partial^2 \alpha(X_s^{[0]})[(X_s^{[1]})^{\otimes 2}]ds+2\int_0^t \Phi(s,t)\partial \beta(X_s^{[0]})[X_s^{[1]}]dV_s,\label{eq:expression-X2}\\
  X_t^{[3]}=&3\int_0^t \Phi(s,t)\partial^2 \alpha(X_s^{[0]})[X_s^{[1]}\otimes X_s^{[2]}]ds+\int_0^t \Phi(s,t)\partial^3 \alpha(X_s^{[0]})[(X_s^{[1]})^{\otimes 3}]ds\nonumber\\
  &+3\int_0^t \Phi(s,t)\partial \beta(X_s^{[0]})[X_s^{[2]}]dV_s+3\int_0^t \Phi(s,t)\partial^2 \beta(X_s^{[0]})[(X_s^{[1]})^{\otimes 2}]dV_s\nonumber\\
  =&3\int_0^t\Phi(s,t)\partial^2\alpha(X_s^{[0]})
    \left[
      X_s^{[1]}\otimes
      \int_0^s\Phi(r,s)\partial^2\alpha(X_r^{[0]})
        [(X_r^{[1]})^{\otimes2}]\,dr
    \right]ds\nonumber\\
  &+6\int_0^t\Phi(s,t)\partial^2\alpha(X_s^{[0]})
    \left[
      X_s^{[1]}\otimes
      \int_0^s\Phi(r,s)\partial\beta(X_r^{[0]})
        [X_r^{[1]}]\,dV_r
    \right]ds\nonumber\\
  &+\int_0^t\Phi(s,t)\partial^3\alpha(X_s^{[0]})
    [(X_s^{[1]})^{\otimes3}]\,ds\nonumber\\
  &+3\int_0^t\Phi(s,t)\partial\beta(X_s^{[0]})
    \left[
      \int_0^s\Phi(r,s)\partial^2\alpha(X_r^{[0]})
        [(X_r^{[1]})^{\otimes2}]\,dr
    \right]dV_s\nonumber\\
  &+6\int_0^t\Phi(s,t)\partial\beta(X_s^{[0]})
    \left[
      \int_0^s\Phi(r,s)\partial\beta(X_r^{[0]})
        [X_r^{[1]}]\,dV_r
    \right]dV_s\nonumber\\
  &+3\int_0^t\Phi(s,t)\partial^2\beta(X_s^{[0]})
    [(X_s^{[1]})^{\otimes2}]\,dV_s.\label{eq:expression-m3}
\end{align}

\subsection{Expansion of the conditional expectation}
To describe the expansion of the conditional expectation (\ref{eq-conditional-expectation}), let us write 
\begin{align*}
  F_t^{[k]}:=\left.\frac{1}{k!}\frac{\partial^k}{\partial \epsilon^k}f(X_t^\epsilon)\right|_{\epsilon=0},~~\left.H_t^{[k]}=\frac{1}{k!}\frac{\partial^k}{\partial \epsilon^k}h(X_t^\epsilon)\right|_{\epsilon=0}~~(k\geq 0).
\end{align*}
Note that $f(X_t^{[0]})$ and $h(X_t^{[0]})$ are deterministic processes, and for $k\geq 1$, we have the expression
\begin{align}
  \label{eq:Fk}&F_t^{[k]}=\frac{1}{k!}\sum_{j=1}^k\sum_{(i_1,\cdots,i_j)\in \Lambda(k,j)}\nu(k,(i_1,\cdots,i_j))\partial^jf(X_t^{[0]})[X_t^{[i_1]}\otimes \cdots \otimes X_t^{[i_j]}],\\
  \label{eq:Hk}&H_t^{[k]}=\frac{1}{k!}\sum_{j=1}^k\sum_{(i_1,\cdots,i_j)\in \Lambda(k,j)}\nu(k,(i_1,\cdots,i_j))\partial^jh(X_t^{[0]})[X_t^{[i_1]}\otimes \cdots \otimes X_t^{[i_j]}].
\end{align}
Furthermore, write
\begin{align*}
  \tilde{Z}_t^\epsilon=&\exp\left( \int_0^t (H_s^{[0]}+\epsilon H_s^{[1]})^\top (\sigma(s)\sigma(s)^\top)^{-1} \, dY_s^\epsilon \right. \\
    &\quad \left. - \frac{1}{2} \int_0^t (H_s^{[0]}+\epsilon H_s^{[1]})^\top (\sigma(s)\sigma(s)^\top)^{-1} (H_s^{[0]}+\epsilon H_s^{[1]}) \, ds \right),
\end{align*}
and
\begin{align}
  \tilde{E}_t[U]=\frac{E_{Q^\epsilon}[U\tilde{Z}_t^\epsilon|\mathcal{Y}_t^\epsilon]}{E_{Q^\epsilon}[\tilde{Z}_t^\epsilon|\mathcal{Y}_t^\epsilon]}\label{eq:def-E-tilde}
\end{align}
for any tensor-valued random variable with $E[|U|]<\infty$. Note that the right-hand side is well-defined under $E[|U|]<\infty$.

Then, the asymptotic expansion of (\ref{eq-conditional-expectation}) can be described as follows.
\begin{theorem}\label{theorem:filter-expansion}
  Define
  \begin{align}
    \label{def-J-i}\begin{split}
      &\mathcal{J}_t^{(k_1,\cdots,k_i)}=\int_0^t\int_0^{t_{i}}\cdots \int_{0}^{t_2} (H_{t_1}^{[k_1]})^\top (\sigma(t_1)\sigma(t_1)^\top)^{-1} \{dY_{t_1}^\epsilon-(H_{t_1}^{[0]}+\epsilon H_{t_1}^{[1]})dt_1\}\\
    &\qquad \qquad \qquad \times \cdots \times (H_{t_i}^{[k_i]})^\top (\sigma(t_i)\sigma(t_i)^\top)^{-1} \{dY_{t_i}^\epsilon-(H_{t_i}^{[0]}+\epsilon H_{t_i}^{[1]})dt_i\},
    \end{split}\\
    &\mathcal{I}_t^{[0],\epsilon}=1,~\mathcal{I}_t^{[1],\epsilon}=0,~\mathcal{I}_t^{[k],\epsilon}=\sum_{i=1}^{[k/2]}\sum_{\substack{k_1,\cdots,k_i\geq 2\\k_1+\cdots+k_i=k}}\mathcal{J}_t^{(k_1,\cdots,k_i),\epsilon}~(k\geq 2).\nonumber
  \end{align}
  Then, we have
  \begin{align*}
    &E[f(X_t^\epsilon) |\mathcal{Y}_t^\epsilon]\\
    =&\Biggl(\sum_{k=0}^{n}\sum_{i=0}^k\tilde{E}_t^\epsilon[F_t^{[k-i]} \mathcal{I}_t^{[i],\epsilon}]\epsilon^k\Biggr)
    \Biggl\{1+\sum_{j=1}^{n}(-1)^{j}\Biggl(\sum_{k=1}^{n}\tilde{E}_t^\epsilon[\mathcal{I}_t^{[k],\epsilon}]\epsilon^k\Biggr)^{j}\Biggr\}+O_P^T(\epsilon^{n+1}),
  \end{align*}
  for every $n \in \mathbb{N}$ and $0\leq t\leq T$. Here, $O_P^T(\epsilon^k)$ represents a family of random variables $\{\eta_t^\epsilon\}_{0\leq \epsilon<1,t\geq 0}$ such that 
  \begin{align*}
    \sup_{0\leq t\leq T}\sup_{0<\epsilon<1}P(|\eta_t^\epsilon|> K\epsilon^k)\to 0~~(K\to \infty).
  \end{align*}
\end{theorem}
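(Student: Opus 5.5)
This result comes from the author's earlier work \citet{kurisaki2026ks}, so the natural route is to rebuild that argument in outline. We start from the Kallianpur--Striebel formula \eqref{eq-Kallianpur-Striebel-epsilon} and factor $Z_t^\epsilon=\tilde Z_t^\epsilon\,R_t^\epsilon$. The remaining factor is
\begin{align*}
R_t^\epsilon=\exp\Bigl(\int_0^t (h(X_s^\epsilon)-H_s^{[0]}-\epsilon H_s^{[1]})^\top(\sigma\sigma^\top)^{-1}\{dY_s^\epsilon-(H_s^{[0]}+\epsilon H_s^{[1]})ds\}-\tfrac12\int_0^t|\cdots|^2_{(\sigma\sigma^\top)^{-1}}ds\Bigr).
\end{align*}
Dividing numerator and denominator by $E_{Q^\epsilon}[\tilde Z_t^\epsilon\mid\mathcal Y_t^\epsilon]$ gives
\begin{align*}
E[f(X_t^\epsilon)\mid\mathcal Y_t^\epsilon]=\frac{\tilde E_t^\epsilon[f(X_t^\epsilon)R_t^\epsilon]}{\tilde E_t^\epsilon[R_t^\epsilon]}.
\end{align*}
Under $\tilde Z^\epsilon\,dQ^\epsilon$ the innovation $dY-(H^{[0]}+\epsilon H^{[1]})dt$ is a martingale increment, with $\sigma\,dW$ replaced by a Girsanov-shifted Brownian motion. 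Hence $R^\epsilon$ is the stochastic exponential of $M_t=\int_0^t\delta_s^\top(\sigma\sigma^\top)^{-1}d\tilde Y_s$, where $\delta_s=h(X_s^\epsilon)-H_s^{[0]}-\epsilon H_s^{[1]}$.

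The next step is to write $R_t^\epsilon=1+\int_0^t R_s\,dM_s$ and iterate this identity. This produces the series of iterated integrals $\sum_i\int_{0<t_1<\dots<t_i<t}dM_{t_1}\cdots dM_{t_i}$. By Proposition~\ref{prop-expansion-of-SDE} we can Taylor-expand $\delta_s=\sum_{k\ge2}\epsilon^kH_s^{[k]}+O(\epsilon^{n+1})$ in every $L^p$. Substituting this and collecting powers of $\epsilon$ gives $R_t^\epsilon=\sum_{k=0}^n\epsilon^k\mathcal I_t^{[k],\epsilon}+\epsilon^{n+1}\rho_t^\epsilon$. Each $H^{[k_j]}$ carries at least $\epsilon^2$, so at most $[k/2]$ integrals contribute at order $k$, and we recover exactly the definition \eqref{def-J-i}. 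In the same way, $f(X_t^\epsilon)=\sum_{k\le n}\epsilon^kF_t^{[k]}+O(\epsilon^{n+1})$. Multiplying the two expansions gives the numerator $\sum_k\epsilon^k\sum_i\tilde E[F^{[k-i]}\mathcal I^{[i]}]$ up to remainder. The denominator equals $1+\sum_{k\ge1}\epsilon^k\tilde E[\mathcal I^{[k]}]$ up to remainder, because $\mathcal I^{[1]}=0$ and $\mathcal I^{[0]}=1$. Finally, the geometric expansion $1/(1+x)=\sum_{j\le n}(-x)^j+O(x^{n+1})$ yields the stated form, with all cross terms of order $>n$ absorbed into the remainder.

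The main obstacle is controlling the remainders in the $O_P^T$ sense, uniformly in $t\le T$ and $\epsilon$, after conditioning. The conditional expectations $\tilde E_t^\epsilon$ are ratios. So we will bound numerator remainders in $L^1(P)$, or more precisely in $L^p$ under $Q^\epsilon$ together with the moments of $\tilde Z$. Conditional Jensen's inequality and Markov's inequality then convert these bounds into probability bounds. The denominator $E_{Q^\epsilon}[\tilde Z_t\mid\mathcal Y_t]$ is strictly positive, so we only need $P$-tightness of its reciprocal, uniformly in $t$, which again comes from Markov's inequality applied to the unnormalised ratio.

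For the exponential remainder $\rho^\epsilon$ we would use the integral form of Taylor's theorem in $\epsilon$ applied to $\log R$. The moment bounds for $R^\epsilon$ and its $\epsilon$-derivatives follow from the boundedness of the derivatives of $\alpha,\beta$, the polynomial growth of $h$ in \eqref{eq-assumption-h}, and the nondegeneracy \eqref{eq-assumption-sigma-2}. Here $\mathcal J$ is defined as an adapted iterated It\^o integral, so BDG inequalities apply. The one delicate point is exponential moments of $\int|\delta|^2ds$. Since $\delta=O(\epsilon^2)$ with polynomial growth, we would localize on the event $\{\sup_s|X^\epsilon_s-X^{[0]}_s|\le 1\}$, whose complement has probability that is $O(\epsilon^{m})$ for every $m$. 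This localization is exactly why the conclusion is stated in probability rather than in $L^p$.
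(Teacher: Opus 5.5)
The paper gives no argument here and defers entirely to \citet{kurisaki2026ks}. Your reconstruction factors $Z_t^\epsilon=\tilde Z_t^\epsilon R_t^\epsilon$ and writes $R_t^\epsilon$ as the stochastic exponential of the innovation integral under $\tilde Z_T^\epsilon\,dQ^\epsilon$, so that the $\mathcal I_t^{[k],\epsilon}$ are exactly its order-$\epsilon^k$ iterated-integral terms. It then inverts $\tilde E_t^\epsilon[R_t^\epsilon]$ by a geometric series and controls the remainders through conditional moments of $\tilde Z_t^\epsilon$ plus localization. This is the argument the statement's structure encodes, and it uses the same ingredients the paper imports from that work (Proposition~\ref{prop-expansion-of-SDE} and the conditional moment bound behind Proposition~\ref{prop-convergence-under-tilde-E}), so it is essentially the same approach.

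One detail to tighten is the localization. On $\{\sup_s|X_s^\epsilon-X_s^{[0]}|\le1\}$ the term $\epsilon H_s^{[1]}$ inside $\delta_s$ is still an unbounded Gaussian, so $\int_0^t|\delta_s|^2ds$ is not bounded on that event. Either add $\{\sup_s\epsilon|X_s^{[1]}|\le1\}$ to the localizing event, or use that $\exp\bigl(c\,\epsilon^2\int_0^T|X_s^{[1]}|^2ds\bigr)$ is integrable once $\epsilon$ is small relative to $c$.
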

\begin{proof}
  See \citet{kurisaki2026ks}.
\end{proof}

For instance, the expansion up to third order in the case $f(x)=x$ is written as
\begin{align}
  \begin{split}
    &E[X_t^\epsilon|\mathcal{Y}_t^\epsilon]=X_t^{[0]}+\tilde{E}_t[X_t^{[1]}]\epsilon+\frac{1}{2}\tilde{E}_t[X_t^{[2]}]\epsilon^2\\
  &+\left( \frac{1}{6}\tilde{E}_t[X_t^{[3]}]+{\rm Cov}_{\tilde E_t}\left( X_t^{[1]},\int_0^t (H_{s}^{[2]})^\top (\sigma(s)\sigma(s)^\top)^{-1} \{dY_{s}^\epsilon-(H_s^{[0]}+\epsilon H_s^{[1]})ds\} \right) \right)\epsilon^3\\
  &+O_P^T(\epsilon^4),
  \end{split}\label{eq:expansion-upto-third}  
\end{align}
where
\begin{align*}
  &H_s^{[0]}=h(X_s^{[0]}),~~H_s^{[1]}=\partial h(X_s^{[0]})X_s^{[1]},\\
  &H_s^{[2]}=\frac{1}{2}\left\{ \partial h(X_s^{[0]})X_s^{[2]}+\partial^2 h(X_s^{[0]})[(X_s^{[1]})^{\otimes 2}] \right\}.
\end{align*}
These coefficients coincide with the formal derivatives of \eqref{eq-conditional-expectation} at $\epsilon=0$.

In what follows, we assume that $H^{[0]}=0$ by replacing $Y_t$ with $Y_t-\int_0^t H_s^{[0]}ds$ if necessary.

\subsection{Distribution of \texorpdfstring{$(X^{[1]},V)$ under $\tilde{E}_t$}{(X[1], V) under the tilted expectation}}
According to Theorem \ref{theorem:filter-expansion}, the calculation of coefficients is reduced to computing
\begin{align*}
  \tilde{E}_t\left[ F_t^{[j]}\mathcal{J}_t^{(k_1,\cdots,k_i),\epsilon}\right].
\end{align*}
By substituting \eqref{eq:Fk}, \eqref{eq:Hk} and \eqref{def-J-i}, this is further reduced to the calculation of 
\begin{align}
  \begin{split}\label{eq:coeff}
    &\tilde{E}_t\Biggl[\partial^jf(X_t^{[0]})
      [X_t^{[i_1]}\otimes \cdots \otimes X_t^{[i_j]}]\\
    &\times \int_0^t\int_0^{t_i}\cdots\int_0^{t_2}
      \bigl(\partial^{j_1}h(X_{t_1}^{[0]})
      [X_{t_1}^{[p_{1,1}]}\otimes\cdots\otimes
       X_{t_1}^{[p_{j_1,1}]}]\bigr)^\top(\sigma(t_1)\sigma(t_1)^\top)^{-1}
      (dY_{t_1}^\epsilon-\epsilon H_{t_1}^{[1]}dt_1)\\
    &\times\cdots\times
      \bigl(\partial^{j_i}h(X_{t_i}^{[0]})
      [X_{t_i}^{[p_{1,i}]}\otimes\cdots\otimes
       X_{t_i}^{[p_{j_i,i}]}]\bigr)^\top(\sigma(t_i)\sigma(t_i)^\top)^{-1}
      (dY_{t_i}^\epsilon-\epsilon H_{t_i}^{[1]}dt_i)\Biggr].
  \end{split}  
\end{align}
Furthermore, Section \ref{section:expansion-of-system} shows that $X^{[p]}~(p\geq 2)$ can be expressed as a polynomial functional of $\bar{X}=(X^{[1]},V)$. Therefore, we need to examine the distribution of the process $\bar{X}$ under $\tilde{E}_t$, which is given by the following theorem.

\begin{theorem}\label{theorem:linear-smoothing}
  For any $0\leq t\leq T$, the process $\{\bar X_s\}_{0\leq s\leq t}=\{(X_s^{[1]},V_s)\}_{0\leq s\leq t}$ is Gaussian under $\tilde E_t$; that is,
  for every $0\leq t_1\leq\cdots\leq t_k\leq t$, all joint cumulants of order
  three and higher of $(\bar X_{t_1},\ldots,\bar X_{t_k})$ vanish.

  Furthermore, there is a continuous process $\{\mu_{s;t}\}_{0\leq s\leq t}$ and a deterministic function $\Gamma:\mathbb{R}_+^3 \to M_{d_1+m_1}(\mathbb{R})$ such that 
  \begin{align*}
    &\mu_{s;t}=\tilde{E}_t[\bar{X}_s],~~\Gamma(s,u;t)={\rm Cov}_{\tilde{E}_t}(\bar{X}_s,\bar{X}_u)
  \end{align*}
  almost surely, and for fixed $0\leq s,u<t$ we have
  \begin{align}
    d\mu_t
    ={}&\bar a(t)\mu_t\,dt
      +\Gamma(t)\bar c(t)^\top R(t)^{-1}
       \{dY_t^\epsilon-\bar c(t)\mu_t\,dt\},\\
       \frac{d\Gamma(t)}{dt}
    ={}&\bar a(t)\Gamma(t)+\Gamma(t)\bar a(t)^\top
       +\bar b(t)\bar b(t)^\top-\Gamma(t)\bar c(t)^\top R(t)^{-1}\bar c(t)\Gamma(t),\\
    d_t\mu_{s;t}
    ={}&\Gamma(t,s;t)^\top\bar c(t)^\top R(t)^{-1}
       \{dY_t^\epsilon-\bar c(t)\mu_t\,dt\},\label{eq:d-mu-s}\\
    \frac{\partial \Gamma(s,u;t)}{\partial t}
    ={}&-\Gamma(t,s;t)^\top\bar c(t)^\top R(t)^{-1}\bar c(t)
       \Gamma(t,u;t),\label{eq:d-gamma-sut}\\
    \frac{\partial \Gamma(t,s;t)}{\partial t}
    ={}&\{\bar a(t)-\Gamma(t)\bar c(t)^\top R(t)^{-1}\bar c(t)\}
       \Gamma(t,s;t).\label{eq:d-gamma-tut}
  \end{align}
  Here,
  \begin{align*}
    \mu_t=\mu_{t;t},\qquad \Gamma(t)=\Gamma(t,t;t),\qquad
    R(t)=\sigma(t)\sigma(t)^\top.
  \end{align*}
  and
  \begin{align*}
    \bar a(t)&=
    \begin{pmatrix}
      \partial\alpha(X_t^{[0]})&0_{d_1\times m_1}\\
      0_{m_1\times d_1}&0_{m_1\times m_1}
    \end{pmatrix},\\
    \bar b(t)&=
    \begin{pmatrix}
      \beta(X_t^{[0]})\\ I_{m_1}
    \end{pmatrix},\qquad
    \bar c(t)=
    \begin{pmatrix}
      \epsilon\,\partial h(X_t^{[0]})&0_{d_2\times m_1}
    \end{pmatrix}.
  \end{align*}
  Furthermore, we have
  \begin{align}
    \mu_{s;t}
      =E[\bar X_s]+\int_0^t\Gamma(s,r;t)\bar c(r)^\top R(r)^{-1}
        \{dY_r^\epsilon-\bar c(r)E[\bar X_r]\,dr\}.\label{eq:mu-expression}
  \end{align}
\end{theorem}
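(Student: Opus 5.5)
The plan is to recognize $\tilde E_t$ as a genuine conditional expectation in an auxiliary linear Gaussian model, and then to quote the classical Kalman--Bucy filtering and smoothing theory. Under $Q^\epsilon$, Girsanov's theorem makes $Y^\epsilon$ a martingale with quadratic variation $\int_0^\cdot R(s)\,ds$ that is independent of $(X^{[1]},V)$; the law of $(X^{[1]},V)$ itself is unchanged on $[0,T]$. Since $H^{[0]}=0$, we have $(H_s^{[0]}+\epsilon H_s^{[1]})=\epsilon\,\partial h(X_s^{[0]})X_s^{[1]}=\bar c(s)\bar X_s$. Hence $\tilde Z_t^\epsilon$ is exactly the Kallianpur--Striebel likelihood of the linear model
\begin{align*}
  d\bar X_s=\bar a(s)\bar X_s\,ds+\bar b(s)\,dV_s,\qquad d\tilde Y_s=\bar c(s)\bar X_s\,ds+\sigma(s)\,d\tilde W_s,
\end{align*}
where $\bar X_0=0$ and $\tilde W$ is independent of $V$. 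This follows from \eqref{eq:X1} and $dV=I\,dV$. By the Kallianpur--Striebel formula of Section~\ref{section:preliminaries}, applied in reverse to this model, $\tilde E_t[G(\bar X)]$ equals $E[G(\bar X)\mid\tilde{\mathcal Y}_t]$ evaluated along the path $\tilde Y=Y^\epsilon$. More precisely, both sides are the same measurable functional of the observation path, because $E_{Q^\epsilon}[\,\cdot\,\tilde Z_t^\epsilon\mid\mathcal Y_t^\epsilon]$ is an integral over the independent $\bar X$-law with the path $Y^\epsilon$ frozen.

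Gaussianity follows next. In the auxiliary model, $(\bar X,\tilde Y)$ is jointly Gaussian, so the conditional law of $\{\bar X_s\}_{s\le t}$ given $\tilde{\mathcal Y}_t$ is Gaussian. Its conditional mean is an affine (Wiener) functional of the observations, and its conditional covariance is deterministic. Alternatively, one can verify this directly by computing $\tilde E_t[\exp(i\sum_j\lambda_j^\top\bar X_{t_j})]$: the exponent in $\tilde Z_t^\epsilon$ is linear-quadratic in the Gaussian $\bar X$, so completing the square gives a Gaussian characteristic function. Either way, the cumulants of order three and higher vanish. This identifies $\mu_{s;t}$ and a deterministic $\Gamma(s,u;t)$, and continuity of $\mu_{\cdot;t}$ follows from the representation \eqref{eq:mu-expression} obtained below.

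For the equations, the $\mu_t$ and $\Gamma(t)$ equations are the Kalman--Bucy filter for the auxiliary model, with the Riccati equation having $\bar b\bar b^\top$ as its noise term. For the smoother with fixed $s$, I would apply the Kalman--Bucy filter to the augmented state $(\bar X_t,\bar X_s)$, where $\bar X_s$ is held constant for times after $s$. The cross-covariance block is then $\Gamma(t,s;t)$, and the block Riccati equation yields \eqref{eq:d-gamma-tut} for the off-diagonal block, \eqref{eq:d-gamma-sut} for the frozen block (and similarly for the pair $(s,u)$ by a further augmentation), and the gain $\Gamma(t,s;t)^\top\bar c^\top R^{-1}$ in \eqref{eq:d-mu-s}. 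The innovation is $dY-\bar c\mu_t\,dt$.

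Formula \eqref{eq:mu-expression} is the Gaussian conditioning formula $E[\bar X_s\mid\tilde{\mathcal Y}_t]=E\bar X_s+\int_0^t K(s,r)\{d\tilde Y_r-\bar c(r)E\bar X_r\,dr\}$. To identify the kernel, integrate \eqref{eq:d-mu-s} from $s$ to $t$ and rewrite the innovations $dY-\bar c\mu\,dt$ in terms of $dY-\bar cE\bar X\,dt$. Alternatively, verify that the right-hand side has the correct covariance with the observations, using \eqref{eq:d-gamma-sut}. The main obstacle is this last identification, together with making rigorous the transfer from the auxiliary probability space to the actual $Y^\epsilon$ path. I would handle the transfer by working with regular conditional distributions and continuous versions in the observation path, checking that the stochastic integrals against $dY^\epsilon$ are well defined under $Q^\epsilon$, where $Y^\epsilon$ is a Gaussian martingale.
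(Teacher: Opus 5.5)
Your proposal is correct and follows the paper's route. The paper's proof consists exactly of the identifications $d\bar X_t=\bar a(t)\bar X_t\,dt+\bar b(t)\,dV_t$, $\epsilon H_t^{[1]}=\bar c(t)\bar X_t$ and the resulting linear--Gaussian form of $\tilde Z_t^\epsilon$; it then imports the Gaussianity, the filtering and smoothing equations, and \eqref{eq:mu-expression} verbatim from the author's earlier linear-smoothing paper, whereas you re-derive these yourself via the augmented-state Kalman--Bucy filter for \eqref{eq:d-mu-s}--\eqref{eq:d-gamma-tut} and a Fubini computation for \eqref{eq:mu-expression}, which makes the argument self-contained. The step you flagged as the main obstacle closes once you write $\mu_s=\int_0^s\Gamma(s,r;s)\bar c(r)^\top R(r)^{-1}dY_r^\epsilon$, using the filter equation and \eqref{eq:d-gamma-tut}, and combine it with the time-integrated form of \eqref{eq:d-gamma-sut}.
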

\begin{proof}
  The proof is exactly the same as that in \citet{KURISAKI2026104997}. Note that
  \begin{align*}
    &d\bar{X}_t=\bar a(t)\bar{X}_t\,dt+\bar b(t)dV_t,\\
    &\epsilon H_t^{[1]}=\epsilon\,\partial h(X_t^{[0]})X_t^{[1]}
      =\bar c(t)\bar{X}_t
  \end{align*}
  and
  \begin{align*}
  \tilde{Z}_t^\epsilon=&\exp\left(
    \int_0^t (\bar c(s)\bar{X}_s)^\top R(s)^{-1}\,dY_s^\epsilon
    -\frac{1}{2}\int_0^t
      (\bar c(s)\bar{X}_s)^\top R(s)^{-1}
      (\bar c(s)\bar{X}_s)\,ds
  \right).
\end{align*}
\end{proof}
Due to this theorem, \eqref{eq:coeff} can be seen as the expectation of a polynomial functional of the Gaussian process $\bar{X}$ under $\tilde{E}_t$. Therefore, it should be analytically computable, and deriving a recursive algorithm for such quantities is the purpose of this paper.

\section{Calculation of the coefficients: one-dimensional example}\label{section:1dim-example}
In this section, we illustrate the basic idea behind the derivation of a
recursive algorithm in the one-dimensional case before proceeding to the
general setup.

Here, let us consider the case
\(d_1=d_2=m_1=m_2=1\), and consider the expansion of
\(\tilde{E}_t[X_t]\) up to third order.

In applications, it is convenient to derive differentiation
  formulas for the means and covariances of \(X^{[1]}\) and \(V\), rather
  than for the augmented variable \(\bar X\).  Let us write
  \begin{align*}
    \mu_{s;t}&=(\mu_{s;t}^X,\mu_{s;t}^V),\\
    \Gamma(s,u;t)&=
    \begin{pmatrix}
      \Gamma^{XX}(s,u;t)&\Gamma^{XV}(s,u;t)\\
      \Gamma^{VX}(s,u;t)&\Gamma^{VV}(s,u;t)
    \end{pmatrix},
  \end{align*}
  where, for example,
  \begin{align*}
    \mu_{s;t}^X=\tilde{E}_t[X_s^{[1]}],~~\Gamma^{XV}(s,u;t)
    ={\rm Cov}_{\tilde E_t}(X_s^{[1]},V_u).
  \end{align*}
  By covariance symmetry,
  \begin{align}
    \Gamma^{VX}(s,u;t)=\Gamma^{XV}(u,s;t),
    \label{eq:Gamma-XV-VX-symmetry}
  \end{align}
  so it is enough to give the equations for
  \(\Gamma^{XX}\), \(\Gamma^{XV}\), and \(\Gamma^{VV}\).
  We use the abbreviations
  \[
    \mu_t^X=\mu_{t;t}^X,\qquad
    \mu_t^V=\mu_{t;t}^V,\qquad
    \Gamma^{AB}(t)=\Gamma^{AB}(t,t;t)
    \quad(A,B\in\{X,V\}).
  \]
  Then Theorem~\ref{theorem:linear-smoothing} yields
  \begin{align}
    d\mu_t^X={}&
      \alpha'(X_t^{[0]})\mu_t^X\,dt
      +\epsilon\,\Gamma^{XX}(t)
        \frac{h'(X_t^{[0]})}{\sigma(t)^2}
      \left\{
        dY_t^\epsilon
        -\epsilon h'(X_t^{[0]})\mu_t^X\,dt
      \right\},\label{eq:filter-mean-X}\\
    d\mu_t^V={}&
      \epsilon\,\Gamma^{XV}(t)
        \frac{h'(X_t^{[0]})}{\sigma(t)^2}
      \left\{
        dY_t^\epsilon
        -\epsilon h'(X_t^{[0]})\mu_t^X\,dt
      \right\}.
      \label{eq:filter-mean-V}
  \end{align}
  For fixed \(0\leq s<t\), the smoothing means satisfy
  \begin{align}
    d_t\mu_{s;t}^X
      ={}&\epsilon\,\Gamma^{XX}(t,s;t)
        \frac{h'(X_t^{[0]})}{\sigma(t)^2}
        \left\{
          dY_t^\epsilon
          -\epsilon h'(X_t^{[0]})\mu_t^X\,dt
        \right\},
        \label{eq:smoothing-mean-X}\\
    d_t\mu_{s;t}^V
      ={}&\epsilon\,\Gamma^{XV}(t,s;t)
        \frac{h'(X_t^{[0]})}{\sigma(t)^2}
        \left\{
          dY_t^\epsilon
          -\epsilon h'(X_t^{[0]})\mu_t^X\,dt
        \right\}.
        \label{eq:smoothing-mean-V}
  \end{align}
  The covariance equations in
  Theorem~\ref{theorem:linear-smoothing} then become
  \begin{align}
    \frac{d\Gamma^{XX}(t)}{dt}
      ={}&2\alpha'(X_t^{[0]})\Gamma^{XX}(t)
        +\beta(X_t^{[0]})^2
        -\frac{\epsilon^2h'(X_t^{[0]})^2}{\sigma(t)^2}
          \Gamma^{XX}(t)^2,
        \label{eq:filter-covariance-XX}\\
    \frac{d\Gamma^{XV}(t)}{dt}
      ={}&\alpha'(X_t^{[0]})\Gamma^{XV}(t)
        +\beta(X_t^{[0]})
        -\frac{\epsilon^2h'(X_t^{[0]})^2}{\sigma(t)^2}
          \Gamma^{XX}(t)\Gamma^{XV}(t),
        \label{eq:filter-covariance-XV}\\
    \frac{d\Gamma^{VV}(t)}{dt}
      ={}&1
        -\frac{\epsilon^2h'(X_t^{[0]})^2}{\sigma(t)^2}
          \Gamma^{XV}(t)^2.
        \label{eq:filter-covariance-VV}
  \end{align}
  Moreover, for fixed \(0\leq s,u<t\),
  \begin{align}
    \frac{\partial\Gamma^{XX}(s,u;t)}{\partial t}
      ={}&-\frac{\epsilon^2h'(X_t^{[0]})^2}{\sigma(t)^2}
        \Gamma^{XX}(t,s;t)\Gamma^{XX}(t,u;t),
      \label{eq:terminal-derivative-Gamma-XX}\\
    \frac{\partial\Gamma^{XV}(s,u;t)}{\partial t}
      ={}&-\frac{\epsilon^2h'(X_t^{[0]})^2}{\sigma(t)^2}
        \Gamma^{XX}(t,s;t)\Gamma^{XV}(t,u;t),
      \label{eq:terminal-derivative-Gamma-XV}\\
    \frac{\partial\Gamma^{VV}(s,u;t)}{\partial t}
      ={}&-\frac{\epsilon^2h'(X_t^{[0]})^2}{\sigma(t)^2}
        \Gamma^{XV}(t,s;t)\Gamma^{XV}(t,u;t),
      \label{eq:terminal-derivative-Gamma-VV}\\
    \frac{\partial\Gamma^{XX}(t,s;t)}{\partial t}
      ={}&
      \left\{\alpha'(X_t^{[0]})
        -\frac{\epsilon^2h'(X_t^{[0]})^2}{\sigma(t)^2}
          \Gamma^{XX}(t)\right\}
      \Gamma^{XX}(t,s;t),
      \label{eq:terminal-derivative-current-Gamma-XX}\\
    \frac{\partial\Gamma^{XV}(t,s;t)}{\partial t}
      ={}&
      \left\{\alpha'(X_t^{[0]})
        -\frac{\epsilon^2h'(X_t^{[0]})^2}{\sigma(t)^2}
          \Gamma^{XX}(t)\right\}
      \Gamma^{XV}(t,s;t),
      \label{eq:terminal-derivative-current-Gamma-XV}\\
    \frac{\partial\Gamma^{VV}(t,s;t)}{\partial t}
      ={}&
      -\frac{\epsilon^2h'(X_t^{[0]})^2}{\sigma(t)^2}
        \Gamma^{XV}(t)\Gamma^{XV}(t,s;t).
      \label{eq:terminal-derivative-current-Gamma-VV}
  \end{align}
  Using these equations, let us derive a recursive algorithm for the expansion coefficients in the third order expansion \eqref{eq:expansion-upto-third} in the one-dimensional case.
  \subsubsection*{First order}
  The first-order expansion of the conditional expectation can be written as
  \begin{align*}
    E[X_t^\epsilon|\mathcal{Y}_t^\epsilon]
    =X_t^{[0]}+\epsilon\tilde{E}_t[X_t^{[1]}]
    =X_t^{[0]}+\epsilon\mu_t^X,
  \end{align*}
  where $\mu_{t;t}^X$ is the solution of 
  \begin{align*}
    d\mu_t^X={}&
      \alpha'(X_t^{[0]})\mu_t^X\,dt
      +\epsilon\,\Gamma^{XX}(t)
        \frac{h'(X_t^{[0]})}{\sigma(t)^2}
      \left\{
        dY_t^\epsilon
        -\epsilon h'(X_t^{[0]})\mu_t^X\,dt
      \right\},\\
      \frac{d\Gamma^{XX}(t)}{dt}
      ={}&2\alpha'(X_t^{[0]})\Gamma^{XX}(t)
        +\beta(X_t^{[0]})^2
        -\frac{\epsilon^2h'(X_t^{[0]})^2}{\sigma(t)^2}
         \Gamma^{XX}(t)^2.
  \end{align*}
  This means that the first-order approximation coincides with the Kalman-Bucy filter for the linearly approximated system
  \begin{align*}
    &dX_t=\alpha'(X_t^{[0]})(X_t-X_t^{[0]})+\beta(X_t^{[0]})dV_t,\\
    &dY_t=h'(X_t^{[0]})(X_t-X_t^{[0]})+\sigma(t)dW_t,
  \end{align*}
  which is known as the Extended Kalman-Bucy filter.

  \subsubsection*{Second order}
  Let us consider the second-order coefficient in \eqref{eq:expansion-upto-third}. By substituting \eqref{eq:expression-X2}, it can be written as
  \begin{align}
    \frac{1}{2}\tilde{E}_t\left[ \int_0^t \Phi(s,t)\alpha''(X_s^{[0]})(X_s^{[1]})^{2}ds \right]+\tilde{E}_t\left[ \int_0^t \Phi(s,t)\beta'(X_s^{[0]})X^{[1]}dV_s \right]:=\frac{1}{2}m_t^{[2]}+n_t^{[2]},\label{eq:second-coefficient}
  \end{align}
  where
  \begin{align}
    \Phi(s,t)=\exp\left( \int_s^t \alpha'(X_r^{[0]})dr \right). \label{eq:dPhi-1dim}
  \end{align}
  Now we derive the recursive equation for $m_t^{[2]}$ by differentiating it with respect to $t$. In order to apply the equations for the mean and covariance given above, we rewrite $m_t^{[2]}$ as
  \begin{align}
    m_t^{[2]}&= \int_0^t \Phi(s,t)\alpha''(X_s^{[0]})\tilde{E}_t\left[(X_s^{[1]})^{2}\right]ds\nonumber\\
    &= \int_0^t \Phi(s,t)\alpha''(X_s^{[0]})
      \left\{(\mu_{s;t}^X)^2+\Gamma^{XX}(s,s;t)\right\}ds.\label{eq:expression-m2}
  \end{align}
  Let us write
  \begin{align*}
    M_{s;t}^{[2]}:=(\mu_{s;t}^X)^2+\Gamma^{XX}(s,s;t).
  \end{align*}
  Then \eqref{eq:smoothing-mean-X} and
  \eqref{eq:terminal-derivative-Gamma-XX} give
  \begin{align*}
    d_tM_{s;t}^{[2]}
    ={}&2\mu_{s;t}^X\Gamma^{XX}(t,s;t)
      \frac{\epsilon h'(X_t^{[0]})}{\sigma(t)^2}
      \left\{
        dY_t^\epsilon-\epsilon h'(X_t^{[0]})\mu_t^X\,dt
      \right\}.
  \end{align*}
  Therefore, we have
  \begin{align}
    dm_t^{[2]}={}&\left[\alpha'(X_t^{[0]})m_t^{[2]}
      +\alpha''(X_t^{[0]})\left\{ (\mu_{t}^X)^2+\Gamma^{XX}(t) \right\}\right]dt\notag\\
    &+2m_t^{[2,1]}\frac{\epsilon h'(X_t^{[0]})}{\sigma(t)^2}
      \left\{
        dY_t^\epsilon-\epsilon h'(X_t^{[0]})\mu_t^X\,dt\label{eq:m2}
      \right\},
  \end{align}
  where
  \begin{align*}
    m_t^{[2,1]}={}&\int_0^t\Phi(s,t)\alpha''(X_s^{[0]})\mu_{s;t}^X\Gamma^{XX}(t,s;t)\,ds.
  \end{align*}
  Now, let us write
  \begin{align*}
    M_{s;t}^{[2,1]}:=\mu_{s;t}^X\Gamma^{XX}(t,s;t).
  \end{align*}
  Then, \eqref{eq:smoothing-mean-X} and \eqref{eq:terminal-derivative-current-Gamma-XX} give
  \begin{align*}
    d_tM_{s;t}^{[2,1]}
    ={}&
      \left\{
        \alpha'(X_t^{[0]})
        -\frac{\epsilon^2h'(X_t^{[0]})^2}{\sigma(t)^2}
          \Gamma^{XX}(t)
      \right\}M_{s;t}^{[2,1]}\,dt\notag\\
    &+\Gamma^{XX}(t,s;t)^2
      \frac{\epsilon h'(X_t^{[0]})}{\sigma(t)^2}
      \left\{
        dY_t^\epsilon-\epsilon h'(X_t^{[0]})\mu_t^X\,dt
      \right\}.
  \end{align*}
  Therefore, we have
  \begin{align}
    \begin{split}
      dm_{t}^{[2,1]}=&\alpha''(X_t^{[0]})\mu_t^X\Gamma^{XX}(t)dt+ \left\{
        \alpha'(X_t^{[0]})
        -\frac{\epsilon^2h'(X_t^{[0]})^2}{\sigma(t)^2}
          \Gamma^{XX}(t)
      \right\}m_t^{[2,1]}dt\\
      &+m^{[2,2]}(t)
      \frac{\epsilon h'(X_t^{[0]})}{\sigma(t)^2}
      \left\{
        dY_t^\epsilon-\epsilon h'(X_t^{[0]})\mu_t^X\,dt
      \right\},
    \end{split}
    \label{eq:m21}
  \end{align}
  where
  \begin{align}
    m^{[2,2]}(t)=\int_0^t\Phi(s,t)\alpha''(X_s^{[0]})\Gamma^{XX}(t,s;t)^2\,ds.\label{eq:expression-m22}
  \end{align}
  Finally, \eqref{eq:terminal-derivative-current-Gamma-XX} yields
  \begin{align}
    \begin{split}
      \frac{d}{dt} m^{[2,2]}(t)
    ={}&
      \left\{
        3\alpha'(X_t^{[0]})
        -\frac{2\epsilon^2h'(X_t^{[0]})^2}{\sigma(t)^2}
          \Gamma^{XX}(t)
      \right\}m^{[2,2]}(t)
      +\alpha''(X_t^{[0]})\Gamma^{XX}(t)^2.
    \end{split}
    \label{eq:m22}
  \end{align}
  In summary, $m_t^{[2]}$ can be calculated by solving the 3-dimensional equation given by \eqref{eq:m2}, \eqref{eq:m21} and \eqref{eq:m22}.

  \begin{remark}[Finite closure mechanism]
    \label{rem:finite-closure-mechanism}
    The termination of the recursion at \eqref{eq:m22} is structural rather
    than accidental. Equations \eqref{eq:d-mu-s} and
    \eqref{eq:d-gamma-sut} show that terminal-time differentiation
    successively replaces the smoothing fields by endpoint covariance
    factors of the form $\Gamma(t,s;t)$. Equation
    \eqref{eq:d-gamma-tut} then shows that such endpoint covariance factors
    evolve without generating a new type of field. Thus, in the present
    example, successive differentiations lead to the finite chain
    \begin{align*}
      (\mu_{s;t}^X)^2+\Gamma^{XX}(s,s;t)
      \quad\longrightarrow\quad
      \mu_{s;t}^X\Gamma^{XX}(t,s;t)
      \quad\longrightarrow\quad
      \Gamma^{XX}(t,s;t)^2,
    \end{align*}
    which explains why \eqref{eq:m2}--\eqref{eq:m22} form a closed system.
  \end{remark}

  Now, let us apply the same argument to the second term of \eqref{eq:second-coefficient}. Although $n_t^{[2]}$ involves integration with respect to $dV_s$, the following formal transformation is justified by approximation with discrete sums, as proved in the next section:
  \begin{align*}
    n_t^{[2]}&= \int_0^t \Phi(s,t)\beta'(X_s^{[0]})\tilde{E}_t\left[X_s^{[1]}dV_s \right]\\
    &=\int_0^t \Phi(s,t)\beta'(X_s^{[0]})\left\{ \tilde{E}_t[X_s^{[1]}]\tilde{E}_t[dV_s]+{\rm Cov}_{\tilde{E}_t}(X^{[1]},dV_s) \right\}\\
    &=\int_0^t \Phi(s,t)\beta'(X_s^{[0]})
      \left\{
        \mu_{s;t}^X\partial_s\mu_{s;t}^V
        +\left.\partial_u\Gamma^{XV}(s,u;t)\right|_{u=s+}
      \right\}ds.
  \end{align*}
  Let us write
  \begin{align*}
    N_{s;t}^{[2]}
    =\mu_{s;t}^X\partial_s\mu_{s;t}^V
      +\left.\partial_u\Gamma^{XV}(s,u;t)\right|_{u=s+}.
  \end{align*}
  Then, \eqref{eq:smoothing-mean-X}, \eqref{eq:smoothing-mean-V} and \eqref{eq:terminal-derivative-Gamma-XV} yield
  \begin{align*}
    d_tN_{s;t}^{[2]}
    ={}&\left\{
      \Gamma^{XX}(t,s;t)\partial_s\mu_{s;t}^V
      +\mu_{s;t}^X\partial_s\Gamma^{XV}(t,s;t)
    \right\}
    \frac{\epsilon h'(X_t^{[0]})}{\sigma(t)^2}
    \left\{
      dY_t^\epsilon
      -\epsilon h'(X_t^{[0]})\mu_t^X\,dt
    \right\}.
  \end{align*}
  Here the right derivative corresponds to the It\^o increment $dV_s$.
  Define
  \begin{align*}
    n_t^{[2,1]}:={}&\int_0^t\Phi(s,t)\beta'(X_s^{[0]})
    \left\{
      \Gamma^{XX}(t,s;t)\partial_s\mu_{s;t}^V
      +\mu_{s;t}^X\partial_s\Gamma^{XV}(t,s;t)
    \right\}ds.
  \end{align*}
  Then we have
  \begin{align*}
    dn_t^{[2]}
    ={}&\alpha'(X_t^{[0]})n_t^{[2]}\,dt
    +n_t^{[2,1]}\frac{\epsilon h'(X_t^{[0]})}{\sigma(t)^2}
    \left\{
      dY_t^\epsilon
      -\epsilon h'(X_t^{[0]})\mu_t^X\,dt
    \right\}\\
    &+\beta'(X_t^{[0]})
      \left\{
        \left.\mu_t^X\partial_s\mu_{s;t}^V\right|_{s=t}
        +\left(
          \left.\partial_u\Gamma^{XV}(s,u;t)\right|_{u=s+}
        \right)_{s=t}
      \right\}dt.
  \end{align*}
  For the boundary evaluations, we use the following formulas, which are
  derived from \eqref{eq:d-Gamma} and \eqref{eq:ds-mu}.
  \begin{align*}
    \left.\partial_u\Gamma^{XV}(s,u;t)\right|_{u=t-}
      ={}&
      \begin{cases}
        0,&s<t,\\
        \beta(X_t^{[0]}),&s=t,
      \end{cases}\\
    \left.\partial_s\Gamma^{VV}(s,u;t)\right|_{s=t}
      ={}&
      \begin{cases}
        0,&u<t,\\
        1,&u=t,
      \end{cases}\\
      \left.\partial_s\mu_{s;t}^V\right|_{s=t}
      ={}&0.
  \end{align*}
  Using these results, we conclude
  \begin{align*}
    dn_t^{[2]}
    ={}&\alpha'(X_t^{[0]})n_t^{[2]}\,dt
    +n_t^{[2,1]}\frac{\epsilon h'(X_t^{[0]})}{\sigma(t)^2}
    \left\{
      dY_t^\epsilon
      -\epsilon h'(X_t^{[0]})\mu_t^X\,dt
    \right\}.
  \end{align*}
  Proceeding in the same way as for the first term, put
  \begin{align*}
    n^{[2,2]}(t):={}&\int_0^t\Phi(s,t)\beta'(X_s^{[0]})
      \Gamma^{XX}(t,s;t)\partial_s\Gamma^{XV}(t,s;t)\,ds.
  \end{align*}
  Then the closed equations are
  \begin{align*}
    dn_t^{[2,1]}
    ={}&\left\{
      \left(
        2\alpha'(X_t^{[0]})
        -\frac{\epsilon^2h'(X_t^{[0]})^2}{\sigma(t)^2}
          \Gamma^{XX}(t)
      \right)n_t^{[2,1]}
      +\beta'(X_t^{[0]})\beta(X_t^{[0]})\mu_t^X
    \right\}dt\\
    &+2n^{[2,2]}(t)\frac{\epsilon h'(X_t^{[0]})}{\sigma(t)^2}
      \left\{
        dY_t^\epsilon
        -\epsilon h'(X_t^{[0]})\mu_t^X\,dt
      \right\},\\
    \frac{dn^{[2,2]}(t)}{dt}
    ={}&
      \left(
        3\alpha'(X_t^{[0]})
        -\frac{2\epsilon^2h'(X_t^{[0]})^2}{\sigma(t)^2}
          \Gamma^{XX}(t)
      \right)n^{[2,2]}(t)
      +\beta'(X_t^{[0]})\beta(X_t^{[0]})\Gamma^{XX}(t).
  \end{align*}
  The initial values are $n_0^{[2]}=n_0^{[2,1]}=n^{[2,2]}(0)=0$.

  \subsubsection*{Third order}
  The third-order coefficient in \eqref{eq:expansion-upto-third} is given by
  \begin{align*}
    &\frac{1}{6}\tilde{E}_t[X_t^{[3]}]+\frac{1}{2}{\rm Cov}_{\tilde E_t}\left( X_t^{[1]},\int_0^t \frac{h''(X_s^{[0]})}{\sigma(s)^2}(X_s^{[1]})^2 \{dY_{s}^\epsilon-\epsilon h'(X_s^{[0]})X_s^{[1]}ds\} \right)\\
    &+\frac{1}{2}{\rm Cov}_{\tilde E_t}\left( X_t^{[1]},\int_0^t \frac{h'(X_s^{[0]})}{\sigma(s)^2}X_s^{[2]} \{dY_{s}^\epsilon-\epsilon h'(X_s^{[0]})X_s^{[1]}ds\} \right)\\
    &:=\frac{1}{6}m_t^{[3]}+\frac{1}{2}f_t+\frac{1}{2}g_t
  \end{align*}
  The calculation of $m_t^{[3]}$ is analogous to that at second order. Let us focus on the calculation of $f_t$. The difference here is that it includes integration with respect to $dY_s^\epsilon$, which makes it difficult to apply Fubini's theorem, since $\tilde{E}_t$ depends on the whole path $\{Y_s^\epsilon\}_{0\leq s\leq t}$, and the resulting integrand is not adapted to $\{\mathcal{Y}_{t}\}_{t\geq 0}$. However, the following formal application of Fubini's theorem and Wick's formula is justified by the result from the next section:
  \begin{align*}
    f_t={}&\int_0^t \frac{h''(X_s^{[0]})}{\sigma(s)^2}
      \tilde{E}_t\left[(X_t^{[1]}-\mu_t^X)(X_s^{[1]})^2
      \left\{dY_{s}^\epsilon
      -\epsilon h'(X_s^{[0]})X_s^{[1]}ds\right\}\right]\\
    ={}&2\int_0^t \frac{h''(X_s^{[0]})}{\sigma(s)^2}
      \mu_{s;t}^X\Gamma^{XX}(t,s;t)dY_s^\epsilon\\
    &-3\epsilon\int_0^t
      \frac{h''(X_s^{[0]})h'(X_s^{[0]})}{\sigma(s)^2}
      \Gamma^{XX}(t,s;t)
      \left\{(\mu_{s;t}^X)^2+\Gamma^{XX}(s,s;t)\right\}ds.
  \end{align*}
  The second term in the final expression can be computed in the same
  way as the second-order coefficient.  For the first term, write
  \begin{align*}
    S_{s;t}=\mu_{s;t}^X\Gamma^{XX}(t,s;t),~~s_t
    :={}&\int_0^t
      \frac{h''(X_s^{[0]})}{\sigma(s)^2}S_{s;t}
      dY_s^\epsilon.
  \end{align*}
  Note that $S_{s;t}$ is not adapted, and this integral is not well-defined as an ordinary It\^o integral. However, the applications of Fubini's theorem and the use of the non-adapted integral can be justified by the discretization argument given in the next section.
  
  In order to differentiate the non-adapted integral, we need the following formula.
  \begin{proposition}[Generalized It\^o formula]
    Let
    \begin{align*}
      A_t=\int_0^t a_{s;t}dY_s^\epsilon,
    \end{align*}
    be a one-dimensional non-adapted integral resulting from Theorem \ref{theorem:convergence-of-sum}, and suppose that
    \begin{align*}
      d_ta_{s;t}&=p_{s;t}\,dt+q_{s;t}dY_t^\epsilon.
    \end{align*}
    Then
    \begin{align*}
      dA_t={}&a_{t;t}dY_t^\epsilon
      +\int_0^tp_{s;t}\,dY_s^\epsilon \,dt
      +\int_0^tq_{s;t}dY_s^\epsilon\,
        dY_t^\epsilon
      +\sigma(t)^2q_{t;t}\,dt.
    \end{align*}
  \end{proposition}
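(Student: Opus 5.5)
The plan is to work entirely at the level of the discrete sums that define the non-adapted integral in Theorem~\ref{theorem:convergence-of-sum}. We prove the formula in integrated form, $A_{t+h}-A_t=\int_t^{t+h}(\cdots)$, and pass to the limit. Fix a partition $\pi=\{0=s_0<s_1<\cdots<s_N=t\}$ and write $A_t^\pi=\sum_k a_{s_k;t}\,\Delta Y_{s_k}$ with $\Delta Y_{s_k}=Y_{s_{k+1}}^\epsilon-Y_{s_k}^\epsilon$. Since the integrand is evaluated at the left endpoint, this is the Itô-type convention corresponding to the right derivatives used earlier. For $t'=t+h$ we refine the partition so that it contains $t$, and decompose
\begin{align*}
  A_{t'}^\pi-A_t^\pi
  =\sum_{s_k<t}\bigl(a_{s_k;t'}-a_{s_k;t}\bigr)\Delta Y_{s_k}
  +\sum_{t\le s_k<t'}a_{s_k;t'}\,\Delta Y_{s_k}.
\end{align*}

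The second sum is handled by writing $a_{s_k;t'}=a_{s_k;s_k}+\int_{s_k}^{t'}(p_{s_k;r}\,dr+q_{s_k;r}\,dY_r^\epsilon)$. The first part gives $\int_t^{t'}a_{r;r}\,dY_r^\epsilon$. In the remainder, the $dr$-piece is $O(h)\cdot\Delta Y$ in aggregate and is negligible. The $dY_r$-piece, paired with $\Delta Y_{s_k}$ over overlapping intervals $[s_k,s_{k+1}]\cap[s_k,t']$, produces a quadratic-covariation contribution. Because $q_{s_k;r}\approx q_{r;r}$ near the diagonal and $d\langle Y\rangle_r=\sigma(r)^2dr$, this yields $\int_t^{t'}\sigma(r)^2q_{r;r}\,dr$. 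This is exactly the extra Stratonovich-like correction, arising because $a_{s;t'}$ anticipates $dY$ on $[s,t']$.

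The first sum is handled by inserting $a_{s_k;t'}-a_{s_k;t}=\int_t^{t'}p_{s_k;r}\,dr+\int_t^{t'}q_{s_k;r}\,dY_r^\epsilon$ and exchanging the finite sum with the $r$-integral. This gives $\int_t^{t'}\bigl(\sum_k p_{s_k;r}\Delta Y_{s_k}\bigr)dr$ and $\int_t^{t'}\bigl(\sum_k q_{s_k;r}\Delta Y_{s_k}\bigr)dY_r^\epsilon$. Here the inner sum over $s_k<t\le r$ is $\mathcal{Y}_t$-measurable in its $\Delta Y$ factors, so the outer $dY_r$ integral is a genuine forward integral. Letting $|\pi|\to0$, the inner sums converge in $L^2$, by Theorem~\ref{theorem:convergence-of-sum} applied to the families $p_{\cdot;r}$ and $q_{\cdot;r}$, to the non-adapted integrals $\int_0^r p_{s;r}\,dY_s^\epsilon$ and $\int_0^r q_{s;r}\,dY_s^\epsilon$. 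This produces the second and third terms of the formula.

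The main obstacle is uniformity. We need $L^2$ convergence of the inner sums uniformly in $r\in[t,t']$, and we need to control $q_{s_k;r}-q_{r;r}$ in the diagonal term, even though none of these quantities is adapted. For the integrands arising from Theorem~\ref{theorem:linear-smoothing} (polynomials in $\mu_{s;t}$, $\Gamma(s,u;t)$ and their $s$-derivatives), I would use the following facts:
\begin{itemize}
  \item $p$ and $q$ are again of the same polynomial form, so the induction structure of Theorem~\ref{theorem:convergence-of-sum} applies to them.
  \item By \eqref{eq:mu-expression}, each such field is jointly continuous in $(s,r)$ with moments bounded uniformly in $(s,r)$.
\end{itemize}
Moment bounds then follow from the Gaussianity of $Y^\epsilon$ under the reference measure, together with hypercontractivity for the polynomial chaos. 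The diagonal $q\,\Delta Y\,\Delta Y$ term requires a separate estimate showing that the off-diagonal products have vanishing $L^1$ limit. I would attempt this by expanding $q_{s_k;r}$ through \eqref{eq:mu-expression} into adapted pieces plus iterated integrals and applying the discrete Itô isometry piece by piece. Given these two estimates, dividing by $h$ is not needed, since the integrated identity is exactly the claimed differential statement.
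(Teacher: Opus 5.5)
Your route is in substance the paper's. You use discrete sums, expand the integrand in the terminal variable within each cell, and exchange the finite sum with the $dr$ and $dY_r$ integrals. The same-cell product $\bigl(\int_{s_k}^{s_{k+1}}q_{s_k;r}\,dY_r^\epsilon\bigr)\Delta Y_{s_k}$ then produces $\sigma(t)^2q_{t;t}\,dt$, and Gaussian moment and mesh estimates give uniformity.

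However, your split of $A_{t'}^\pi-A_t^\pi$ contains a bookkeeping error, and as written the two halves do not fit together. In the old-cell sum the inner sums run only over $s_k<t$. So they converge to $\int_0^t p_{s;r}\,dY_s^\epsilon$ and $\int_0^t q_{s;r}\,dY_s^\epsilon$, not to the integrals over $[0,r]$ that you claim. The missing parts over $[t,r]$ are exactly the new-cell terms you discard:
\begin{itemize}
\item the $dr$-piece $\sum_{t\le s_k<t'}\bigl(\int_{s_k}^{t'}p_{s_k;r}\,dr\bigr)\Delta Y_{s_k}$ tends to $\int_t^{t'}\bigl(\int_t^r p_{s;r}\,dY_s^\epsilon\bigr)dr$;
\item the non-overlapping part $\sum_{t\le s_k<t'}\Delta Y_{s_k}\int_{s_{k+1}}^{t'}q_{s_k;r}\,dY_r^\epsilon$ of the $dY_r$-piece tends to $\int_t^{t'}\bigl(\int_t^r q_{s;r}\,dY_s^\epsilon\bigr)dY_r^\epsilon$.
\end{itemize}
Neither vanishes for fixed $h$. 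The second has $L^2$-norm of order $h$, the same order as the correction $\int_t^{t'}\sigma(r)^2q_{r;r}\,dr$ you are isolating. So you have two options. You can keep these terms and get an exact identity. Or you can treat them as $o(h)$, but then you must sum over $h$-blocks of $[0,T]$ and show the accumulated errors vanish. They do: the first by an $O(h^{3/2})$ bound, the second by martingale orthogonality. That is precisely the step your last sentence declares unnecessary.

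The clean repair is to keep them and recombine after Fubini. The inner sums become $\sum_{s_k<r}p_{s_k;r}\Delta Y_{s_k}$ and $\sum_{s_{k+1}\le r}q_{s_k;r}\Delta Y_{s_k}$, whose range is tied to the running terminal time $r$. Up to one boundary cell, these are the sums covered by Theorem~\ref{theorem:convergence-of-sum}, which converge in $L^2$ uniformly in $r\le T$. This is what settles the uniformity of the inner sums that you flagged. Truncated sums over $s_k<t$ with terminal time $r>t$ are not of that form.

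Equivalently, expand every cell from its right endpoint up to the terminal time, as the paper does in \eqref{eq:discrete-variation-of-constants}. This gives an exact representation of the discrete sum at each fixed $t$, and your increment is the difference of two such representations. Only the same-cell term (the paper's $\mathcal R_t^\delta$) generates something new. Replacing $q_{s_k;r}$ by $q_{r;r}$ there requires a state-time regularity bound such as \eqref{eq:mesh-continuity-chi}.

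Two smaller points:
\begin{itemize}
\item The paper's limits are taken along the uniform grids $s_i=i\delta$. A partition containing both $t$ and $t+h$ therefore needs a short continuity or partition-independence argument.
\item If existence of the limits for $p$ and $q$ is to be proved rather than assumed, the induction must be non-circular. It is so only because $q$, and $p$ apart from a term proportional to $a_{s;t}$ itself (the paper's $P(t)F$), have strictly lower degree (Proposition~\ref{prop:triangular-terminal-differentiation}). Having merely the ``same polynomial form'' is not enough.
\end{itemize}
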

  The last term follows from
  $q_{t;t}dY_t^\epsilon \times dY_t^\epsilon=q_{t;t}\sigma(t)^2dt$.
  The rigorous statement is Theorem \ref{theorem:genralized-ito}.

  Let us apply this formula to $s_t$. From \eqref{eq:smoothing-mean-X} and
  \eqref{eq:terminal-derivative-current-Gamma-XX}, we have
  \begin{align*}
    d_t\left\{S_{s;t}\right\}
    ={}&\left[\left\{\alpha'(X_t^{[0]})
      -\frac{\epsilon^2h'(X_t^{[0]})^2}{\sigma(t)^2}
        \Gamma^{XX}(t)\right\}
      \mu_{s;t}^X\Gamma^{XX}(t,s;t)\right.\\
    &\left.\qquad
      -\frac{\epsilon^2h'(X_t^{[0]})^2}{\sigma(t)^2}
        \mu_t^X\Gamma^{XX}(t,s;t)^2\right]dt\\
    &+\frac{\epsilon h'(X_t^{[0]})}{\sigma(t)^2}
      \Gamma^{XX}(t,s;t)^2dY_t^\epsilon.
  \end{align*}
  Therefore, the generalized It\^o formula yields
  \begin{align*}
    ds_t={}&\left[\left\{\alpha'(X_t^{[0]})
      -\frac{\epsilon^2h'(X_t^{[0]})^2}{\sigma(t)^2}
        \Gamma^{XX}(t)\right\}s_t
      -\frac{\epsilon^2h'(X_t^{[0]})^2}{\sigma(t)^2}
        \mu_t^Xs_t^{[1]}
    \right]dt\\
    &+\frac{\epsilon h''(X_t^{[0]})h'(X_t^{[0]})}
        {\sigma(t)^2}\Gamma^{XX}(t)^2dt\\
    &+\left\{
      \frac{h''(X_t^{[0]})}{\sigma(t)^2}
        \mu_t^X\Gamma^{XX}(t)
      +\frac{\epsilon h'(X_t^{[0]})}{\sigma(t)^2}s_t^{[1]}
    \right\}dY_t^\epsilon,
  \end{align*}
  where
  \begin{align*}
    s_t^{[1]}:={}&\int_0^t
      \frac{h''(X_s^{[0]})}{\sigma(s)^2}
      \Gamma^{XX}(t,s;t)^2dY_s^\epsilon.
  \end{align*}  
  For this term, a direct application of \eqref{eq:terminal-derivative-current-Gamma-XX} gives
  \begin{align*}
    ds_t^{[1]}={}&\left[2\left\{\alpha'(X_t^{[0]})
      -\frac{\epsilon^2h'(X_t^{[0]})^2}{\sigma(t)^2}
        \Gamma^{XX}(t)\right\}s_t^{[1]}
    \right]dt
    +\frac{h''(X_t^{[0]})}{\sigma(t)^2}
      \Gamma^{XX}(t)^2dY_t^\epsilon.
  \end{align*}
  Hence the equations for $s_t$ and $s_t^{[1]}$ form a closed system,
  with $s_0=s_0^{[1]}=0$.

  The full derivation of all terms in the one-dimensional third-order
  coefficient is provided in the supplementary material appended to this
  paper.

\section{Non-adapted integrals and the generalized It\^o formula}
\label{section:generalized-calculus}
The purpose of this section is to rigorously justify the formal operations presented in the previous section.  Recall that what we have to compute is a conditional expectation of the form \eqref{eq:coeff}.  The basic idea of the justification can be illustrated as follows:
\begin{enumerate}
  \item We first prove that the integrals appearing in \eqref{eq:coeff}, as well as their conditional expectations, can be approximated by the corresponding discrete sums (Section~\ref{sec:discretization}).
  \item We then apply Fubini's theorem and Wick decomposition to the discretized sums (Section~\ref{sec:Wick-decomposition}).
  \item We need to prove the convergence of each term in the decomposition.  This is achieved by induction using the closing structure discussed in Remark~\ref{rem:finite-closure-mechanism}.  More precisely, the same closing structure holds for the discretized sums, apart from residual terms that vanish in the limit.  Moreover, since the resulting dynamics are linear (see, for example, \eqref{eq:m2}, \eqref{eq:m21}, and \eqref{eq:m22}), their solutions can be written explicitly.  Using these explicit expressions, the convergence is recursively reduced to the simplest cases (Section~\ref{sec:generalized-Ito}).
\end{enumerate}
\subsection{Tensor notation}
In what follows, a tensor is defined as a multi-indexed array $T = (T_{i_1, \cdots, i_k})_{i_1, \cdots, i_k}$, where each $T_{i_1, \cdots, i_k}$ is a real number, a vector, or a matrix. Furthermore, the following notation will be adopted in the remainder of this section.

\begin{itemize}
  \item For two real-valued tensors 
  \(S = (S_{i_1, \cdots, i_k})_{i_1, \cdots, i_k}\) 
  and 
  \(T = (T_{j_1, \cdots, j_l})_{j_1, \cdots, j_l}\), 
  the tensor product \(S \otimes T\) is a \((k + l)\)-dimensional tensor
  with components
  \begin{align*}
    (S\otimes T)_{i_1,\ldots,i_k,j_1,\ldots,j_l}
    =S_{i_1,\ldots,i_k}T_{j_1,\ldots,j_l}.
  \end{align*}
  \item For $k=(k_1,\cdots,k_n)\in \mathbb{N}^n$ and $\ell=(\ell_1,\cdots,\ell_n)\in \mathbb{N}^n$, we write
  \begin{align*}
    T_{k,\ell}(\mathbb{R})&={\rm span}\{A^{(1)}\otimes \cdots \otimes A^{(n)}|A^{(i)}\in M_{k_i, l_i}(\mathbb{R})~(i=1,\cdots,n)\}
    =\bigotimes_{i=1}^n M_{k_i, l_i}(\mathbb{R}).
  \end{align*}
  \item For $k,\ell,m \in \mathbb{N}^n$, the product of tensors $A^{(1)} \otimes \cdots \otimes A^{(n)} \in T_{k,\ell}(\mathbb{R})$ and $B^{(1)} \otimes \cdots \otimes B^{(n)}\in T_{\ell,m}(\mathbb{R})$ is defined as 
  \begin{align*}
      &(A^{(1)} \otimes \cdots \otimes A^{(n)})(B^{(1)} \otimes \cdots \otimes B^{(n)}) = (A^{(1)} B^{(1)}) \otimes \cdots \otimes (A^{(n)} B^{(n)}).
  \end{align*}
  This product is extended bilinearly to $T_{k,\ell}(\mathbb{R})\times T_{\ell,m}(\mathbb{R})$. It is consistent with the product of the two tensors expressed as Kronecker products.  
\end{itemize}

\subsection{Discretization of the integral}\label{sec:discretization}
We consider the expectation \eqref{eq:coeff}, to which the coefficient of the expansion is reduced. As mentioned in Section \ref{section:expansion-of-system}, $X^{[p]}~(p\geq 2)$ can be represented as a polynomial functional of $(X^{[1]},V)$ as in \eqref{eq:expression-X2} and \eqref{eq:expression-m3}. Thus, \eqref{eq:coeff} can further be reduced to a sum of the form
\begin{align}
  \begin{split}\label{eq:Xp-tensor}
    S(t)\Biggl[ \int_0^t\int_0^{t_{\ell}}\cdots \int_{0}^{t_2}\Phi_\ell(t_\ell,t)\otimes\cdots\otimes
  \Phi_1(t_1,t_2)\otimes f_1(t_1)\otimes\cdots\otimes f_\ell(t_\ell)\\
  \otimes(X_{t_1}^{[1]})^{\otimes p_1}\otimes\cdots\otimes
  (X_{t_\ell}^{[1]})^{\otimes p_\ell}
  \otimes d\eta_{t_1}^1\otimes\cdots\otimes d\eta_{t_\ell}^\ell \Biggr],
  \end{split}  
\end{align}
where $S(t)$ is a continuously differentiable tensor-contraction map of the appropriate type and $f_r(t)$ is a tensor-valued continuous function on $\mathbb{R}_+$, and
\begin{align*}
  d\eta_t^r=dt~{\rm or}~dV_t.
\end{align*}
Also, for some $n_r\in\mathbb{N}$ and $k_r=(k_{r,1},\ldots,k_{r,n_r})\in\mathbb{N}^{n_r}$, define
\begin{align*}
  I_{k_r}=I_{k_{r,1}}\otimes\cdots\otimes I_{k_{r,n_r}}.
\end{align*}
Then $\Phi_r(s,t) \in T_{k_r,k_r}(\mathbb{R}) ~(s\leq t)$ is the solution of the tensor equation
\begin{align}
  \label{eq:Phi-r}\frac{\partial}{\partial t}\Phi_r(s,t)=A_r(t)\Phi_r(s,t),~~\Phi_r(s,s)=I_{k_r},
\end{align}
where $A_r(t)$ is a $T_{k_r,k_r}(\mathbb{R})$-valued continuous function. 

By substituting \eqref{eq:Xp-tensor} into \eqref{eq:coeff} and repeatedly applying It\^o's formula, we can express the integrand of \eqref{eq:coeff} as a sum of the form
\begin{equation}\label{eq:iterated-variation-term}
\begin{split}
  S(t)\Biggl[&(X_t^{[1]})^{\otimes q}\otimes
  \int_0^t\int_0^{t_{k}}\cdots \int_{0}^{t_2}
  \Phi_k(t_k,t)\otimes\cdots\otimes
  \Phi_1(t_1,t_2)\\
  &\quad\otimes f_1(t_1)\otimes\cdots\otimes f_k(t_k)
  \otimes(X_{t_1}^{[1]})^{\otimes p_1}\otimes\cdots\otimes
  (X_{t_k}^{[1]})^{\otimes p_k}
  \otimes d\eta_{t_1}^1\otimes\cdots\otimes d\eta_{t_k}^k
  \Biggr],
\end{split}
\end{equation}
where 
  \begin{itemize}
    \item $k,p_1,\cdots,p_k,q \in \mathbb{N}_+$.
    \item $f_1,\cdots,f_k$ and $S$ are tensor-valued continuous functions on $\mathbb{R}_+$.
    \item $\Phi_r$ is the solution of \eqref{eq:Phi-r}.
    \item For $i=1,2,\cdots,k$,
    \begin{align*}
      d\eta_{t}^i=r_i(t)dt~{\rm or}~dV_t~{\rm or}~(\sigma(t)\sigma(t)^\top)^{-1} dY_{t}^\epsilon,
    \end{align*}
    where $r_i(t)=1$ or $(\sigma(t)\sigma(t)^\top)^{-1}$.
    \end{itemize}

Now, let us approximate the integral in \eqref{eq:iterated-variation-term} through a discrete sum. Define
\begin{align*}
  L=&(X_t^{[1]})^{\otimes q}\otimes
  \int_0^t\int_0^{t_{k}}\cdots \int_{0}^{t_2}
  \Phi_k(t_k,t)\otimes\cdots\otimes
  \Phi_1(t_1,t_2)\\
  &\otimes f_1(t_1)\otimes\cdots\otimes f_k(t_k)
  \otimes(X_{t_1}^{[1]})^{\otimes p_1}\otimes\cdots\otimes
  (X_{t_k}^{[1]})^{\otimes p_k}
  \otimes d\eta_{t_1}^1\otimes\cdots\otimes d\eta_{t_k}^k.
\end{align*}
Also, let $\delta>0$ and $s_i=i\delta$.  For $r\geq1$, define
\begin{align*}
  D_\delta^r(t)
  :=\left\{
    \bm j=(j_1,\ldots,j_r)\in\mathbb Z_{\geq0}^r:
    0\leq j_1<\cdots<j_r,\quad s_{j_r+1}<t
  \right\}.
\end{align*}
For $\bm j\in D_\delta^r(t)$, $j_i$ denotes the $i$-th component of
$\bm j$.  We also set $D_\delta^0(t)=\{\varnothing\}$, with the convention
that the corresponding list of grid arguments is omitted.  Consider the
following discretized sum of $L$:
\begin{align}\label{eq:Ldelta}
  \begin{split}
    L^\delta=(X_t^{[1]})^{\otimes q} \otimes &\sum_{\bm j\in D_\delta^k(t)} \Phi_k(s_{j_k},t) \otimes \cdots \otimes \Phi_1(s_{j_1},s_{j_2})\otimes f_1(s_{j_1})\otimes\cdots\otimes f_k(s_{j_k})\\
  &\otimes (X_{s_{j_1}}^{[1]})^{\otimes p_1} \otimes \cdots \otimes (X_{s_{j_k}}^{[1]})^{\otimes p_k} \otimes (\eta_{s_{j_1+1}}^1 - \eta_{s_{j_1}}^1) \otimes \cdots 
    \otimes (\eta_{s_{j_k+1}}^k - \eta_{s_{j_k}}^k).
  \end{split}  
\end{align}
Then we can prove the following convergence results.
\begin{proposition}\label{prop:convergence-Ldelta}
  For any $p\geq 1$, we have
  \begin{align*}
    E_{Q^\epsilon}[|L^\delta-L|^p]\to 0~~(\delta\to 0).
  \end{align*}
\end{proposition}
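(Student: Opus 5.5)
The plan is to reduce $L$ to an ordinary iterated It\^o integral with adapted integrands under $Q^\epsilon$, and then prove the convergence by induction on the depth $k$. The approach uses moment estimates (Burkholder--Davis--Gundy and H\"older) together with the regularity of the integrands.

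First I would record the structure under $Q^\epsilon$. By Girsanov's theorem, $Y^\epsilon_t=\int_0^t\sigma(s)\,d\tilde W_s$, where $\tilde W$ is a $Q^\epsilon$-Brownian motion. Since $Z^\epsilon$ only reweights by a functional of $(X^\epsilon,W)$, $V$ stays a $Q^\epsilon$-Brownian motion independent of $\tilde W$. Hence $X^{[1]}$, being a linear functional of $V$, is a centered Gaussian process under $Q^\epsilon$ with all moments finite and $E_{Q^\epsilon}|X^{[1]}_s-X^{[1]}_u|^p\le C|s-u|^{p/2}$ on $[0,T]$. Each $d\eta^i$ is then a $dt$-, $dV$- or $dY$-increment, that is, a continuous semimartingale increment with deterministic square-integrable coefficients.

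Next I would remove the non-adaptedness caused by the factors $\Phi_r(t_r,t_{r+1})$. By uniqueness for \eqref{eq:Phi-r}, $\Phi_r(s,t)=\Phi_r(0,t)\Phi_r(0,s)^{-1}$, and $\Phi_r(0,\cdot)$ is continuous and invertible. So in the product structure of the tensors, each factor $\Phi_r(t_r,t_{r+1})$ splits into a piece $\Phi_r(0,t_{r+1})$ attached to the next integration variable and a piece $\Phi_r(0,t_r)^{-1}$ attached to the current one. The same factorization holds at grid points. After this, $L$ is a fixed continuous tensor contraction of $(X^{[1]}_t)^{\otimes q}\otimes\Phi_k(0,t)\otimes\cdots$ with an iterated integral
\begin{align*}
  \mathcal L_k(t)=\int_0^t G_k(t_k)\otimes\mathcal L_{k-1}(t_k)\otimes d\eta^k_{t_k},
\end{align*}
where each $G_r(s)$ is a continuous deterministic tensor times $(X^{[1]}_s)^{\otimes p_r}$. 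The discrete sum $L^\delta$ has the same form with $\mathcal L^\delta_k(t)=\sum_{j} G_k(s_j)\otimes\mathcal L^\delta_{k-1}(s_j)\otimes(\eta^k_{s_{j+1}}-\eta^k_{s_j})$; the inner sum automatically runs over the strictly earlier grid points $s_{j_{r-1}}<s_{j_r}$, so $\mathcal L^\delta_{k-1}(s_j)$ is $\mathcal F_{s_j}$-measurable. By H\"older's inequality and the Gaussian moment bounds on $(X^{[1]}_t)^{\otimes q}$, it then suffices to prove $\sup_{s\le T}E_{Q^\epsilon}|\mathcal L^\delta_r(s)-\mathcal L_r(s)|^p\to0$ for every $p$ and every $r$.

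The induction step writes $\mathcal L^\delta_r(s)-\mathcal L_r(s)$ as $\int_0^{s}\{\bar G^\delta_r(u)\otimes\bar{\mathcal L}^\delta_{r-1}(u)-G_r(u)\otimes\mathcal L_{r-1}(u)\}\otimes d\eta^r_u$ plus a boundary term. Here $\bar{\,\cdot\,}^\delta(u)$ denotes evaluation at the left grid point $\lfloor u/\delta\rfloor\delta$, and the boundary term collects the last incomplete cell, which is excluded by the condition $s_{j_r+1}<t$. The boundary term involves an increment over an interval of length $\le2\delta$, so BDG and the moment bounds make it $O(\delta^{1/2})$ in $L^p$ (or $o(1)$ via $\int_{t-2\delta}^t|\sigma|^2\to0$ for $dY$). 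For the main term, BDG for $dV$ and $dY$, and H\"older for $dt$, bound its $p$-th moment by $C\int_0^s E|\bar G^\delta_r\otimes\bar{\mathcal L}^\delta_{r-1}-G_r\otimes\mathcal L_{r-1}|^{p}(u)\,(1+|\sigma(u)|^2)\,du$. I then split the difference into $(\bar G^\delta_r-G_r)\otimes\bar{\mathcal L}^\delta_{r-1}$, $G_r\otimes(\bar{\mathcal L}^\delta_{r-1}-\mathcal L_{r-1}(\bar u))$ and $G_r\otimes(\mathcal L_{r-1}(\bar u)-\mathcal L_{r-1}(u))$. The first tends to $0$ by uniform continuity of the deterministic coefficients and the Kolmogorov-type moment bound on $X^{[1]}$. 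The second tends to $0$ by the induction hypothesis, applied uniformly in time. The third tends to $0$ by the $L^p$-continuity in time of $\mathcal L_{r-1}$. Uniform moment bounds on $\mathcal L^\delta_{r-1}$, obtained by the same BDG argument, control the products. Dominated convergence in $u$, using that $\int_0^T|\sigma|^2<\infty$, concludes the step; the base case $r=0$ is trivial.

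The main obstacle I expect is bookkeeping rather than analysis. The delicate points are making the $\Phi$-factorization rigorous inside the tensor/Kronecker structure, so that every integrand is genuinely adapted, and handling the $dY$ integrator, whose coefficient $\sigma$ is only square-integrable and not continuous. For the latter I would avoid any continuity of $\sigma$ and use only $\int_I|\sigma|^2\to0$ over short intervals $I$, together with uniform integrability in the dominated convergence step.
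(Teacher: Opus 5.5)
Your proposal is correct and follows essentially the paper's route: induction on the integration depth, with BDG for the $dV$/$dY$ martingale parts and H\"older for the $dt$ parts, kernel step convergence from the continuity of the deterministic factors and the H\"older moment bound on $X^{[1]}$, vanishing terminal and grid-diagonal strips, and a final H\"older step for $(X_t^{[1]})^{\otimes q}$. Your factorization $\Phi_r(s,t)=\Phi_r(0,t)\Phi_r(0,s)^{-1}$ is a worthwhile addition, since it makes the adaptedness of the inner iterated integrals explicit, which the paper leaves implicit.
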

\begin{proof}
  Fix $t>0$ and write $R=\sigma \sigma^\top$. Under $Q^\epsilon$, Girsanov's theorem shows that
  \begin{align*}
    \overline W_s
    :=W_s+\int_0^s\sigma(u)^\top R(u)^{-1}h(X_u^\epsilon)\,du
  \end{align*}
  is a Brownian motion, $V$ remains a Brownian motion, and
  $dY_s^\epsilon=\sigma(s)d\overline W_s$. 

  Consequently, every integrator
  occurring in $L$ can be written under $Q^\epsilon$ as
  \begin{align}\label{eq:eta-semimartingale-decomposition}
    d\eta_s^i=dM_s^i+a_s^i\,ds,
  \end{align}
  where $M^i$ is either zero, a component of $V$, or a stochastic integral
  with deterministic integrand $R^{-1}\sigma$ against $\overline W$. If
  $M^i=0$, then $a_s^i=r_i(s)$ is either $1$ or $R(s)^{-1}$; for the other two
  types, $a_s^i=0$. In particular, on $[0,t]$,
  \begin{align}\label{eq:eta-characteristic-bound}
    d\langle M^i\rangle_s\leq C_t\,ds,
    \qquad
    E_{Q^\epsilon}\left[\left(\int_0^t|a_s^i|\,ds\right)^r\right]<\infty
  \end{align}
  for every $r\geq1$.

  In addition, $X^{[1]}$ solves a linear stochastic differential equation
  \eqref{eq:X1}. Note that the distribution of
  $\{X^{[1]}_t\}_{0\leq t\leq T}$ under $Q^\epsilon$ is the same as under $P$.
  Hence, for every
  $r\geq2$,
  \begin{align}\label{eq:X1-moment-holder}
    E_{Q^\epsilon}\left[\sup_{0\leq s\leq t}|X_s^{[1]}|^r\right]<\infty,
    \qquad
    E_{Q^\epsilon}\left[|X_v^{[1]}-X_u^{[1]}|^r\right]
    \leq C_{r,t}|v-u|^{r/2}.
  \end{align}
  Let 
  \begin{align*}
    K(u_1,\ldots,u_k;t)=&\Phi_k(u_k,t)\otimes\cdots\otimes
  \Phi_1(u_1,u_2)\\
  &\otimes f_1(u_1)\otimes\cdots\otimes f_k(u_k)
  \otimes(X_{u_1}^{[1]})^{\otimes p_1}\otimes\cdots\otimes
  (X_{u_k}^{[1]})^{\otimes p_k}
  \end{align*}
  and 
  \begin{align*}
    K_\delta(u_1,\ldots,u_k;t)=&\Phi_k(s_{j_k},t) \otimes \cdots \otimes \Phi_1(s_{j_1},s_{j_2})\otimes f_1(s_{j_1})\otimes\cdots\otimes f_k(s_{j_k})\\
  &\otimes (X_{s_{j_1}}^{[1]})^{\otimes p_1} \otimes \cdots \otimes (X_{s_{j_k}}^{[1]})^{\otimes p_k}
  \end{align*}
  for $s_{j_i}\leq u_i<s_{j_i+1}$. 
  Due to the continuity of $f$ and \eqref{eq:X1-moment-holder}, we have for every $r\geq1$,
  \begin{align}\label{eq:kernel-step-convergence}
    &E_{Q^\epsilon}\left[
      \sup_{0\leq u_1\leq\cdots\leq u_k\leq t}
      |K_\delta(u_1,\ldots,u_k;t)-K(u_1,\ldots,u_k;t)|^r
    \right]\longrightarrow0,\\
    &\sup_{0<\delta\leq1}E_{Q^\epsilon}\left[
      \sup_{0\leq u_1\leq\cdots\leq u_k\leq t}
      |K_\delta(u_1,\ldots,u_k;t)|^r
    \right]<\infty.
  \end{align}

  We next apply induction on the integration depth. For an outer integration
  with respect to the finite-variation part in
  \eqref{eq:eta-semimartingale-decomposition}, H\"older's inequality gives
  \begin{align*}
    E_{Q^\epsilon}\left[
      \left|\int_0^t(F_s^\delta-F_s)a_s^i\,ds\right|^p
    \right]\longrightarrow0
  \end{align*}
  whenever $F^\delta\to F$ in a sufficiently high $L^r$ and the family is
  uniformly bounded there. For the martingale part, the Burkholder--Davis--
  Gundy inequality and \eqref{eq:eta-characteristic-bound} yield
  \begin{align*}
    E_{Q^\epsilon}\left[
      \left|\int_0^t(F_s^\delta-F_s)\,dM_s^i\right|^p
    \right]
    \leq C_{p,t}E_{Q^\epsilon}\left[
      \left(\int_0^t|F_s^\delta-F_s|^2\,ds\right)^{p/2}
    \right]\longrightarrow0.
  \end{align*}
  The case of one integration follows from
  \eqref{eq:kernel-step-convergence}. Repeating the preceding two estimates,
  and using the induction hypothesis for the inner integral, proves that the
  iterated sum without the terminal factor $(X_t^{[1]})^{\otimes q}$ converges
  to the corresponding iterated integral in $L^r(Q^\epsilon)$ for every
  $r\geq1$. The terminal strip between the last grid point strictly preceding
  $t$ and $t$, as well as the grid
  diagonal strips on which two successive integration variables belong to the
  same mesh interval, is treated by the same estimates. Their total
  contribution tends to zero with the mesh size.

  Denote these two iterated objects by $J_\delta$ and $J$, respectively. By
  H\"older's inequality and \eqref{eq:X1-moment-holder},
  \begin{align*}
    E_{Q^\epsilon}[|L^\delta-L|^p]
    &\leq
    E_{Q^\epsilon}[|X_t^{[1]}|^{2pq}]^{1/2}
    E_{Q^\epsilon}[|J_\delta-J|^{2p}]^{1/2}
    \longrightarrow0.
  \end{align*}
  This completes the proof.
\end{proof}

\begin{proposition}\label{prop-convergence-under-tilde-E}
  Let $p>1$ and $F_n,~F \in L^p(\Omega,\mathcal{X} ,Q^\epsilon)$ ($n \in \mathbb{N}$). If $F_n \to F~(n\to \infty)$ in $L^p(Q^\epsilon)$, then we can take a subsequence $\{F_{n_i}\}_{i \in \mathbb{N}}$ such that
  \begin{align*}
    \tilde{E}_t^\epsilon[F_{n_i}]\xrightarrow{Q^\epsilon\textrm{-a.s.}}\tilde{E}_t^\epsilon[F]~~(i\to \infty).
  \end{align*}
\end{proposition}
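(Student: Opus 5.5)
The plan is to write $\tilde E_t^\epsilon[U]$ as a ratio of $Q^\epsilon$-conditional expectations and to control the numerator and the denominator separately. By definition \eqref{eq:def-E-tilde},
\begin{align*}
  \tilde E_t^\epsilon[F_n]-\tilde E_t^\epsilon[F]
  =\frac{E_{Q^\epsilon}[(F_n-F)\tilde Z_t^\epsilon\mid\mathcal Y_t^\epsilon]}
         {E_{Q^\epsilon}[\tilde Z_t^\epsilon\mid\mathcal Y_t^\epsilon]}.
\end{align*}
The denominator is strictly positive $Q^\epsilon$-a.s. because $\tilde Z_t^\epsilon>0$. It does not depend on $n$, so it is enough to show that the numerator tends to $0$ almost surely along a subsequence.

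First I establish the integrability of $\tilde Z_t^\epsilon$. I claim $\tilde Z_t^\epsilon\in L^{q}(Q^\epsilon)$ for every $q<\infty$, in particular for the conjugate exponent $q=p/(p-1)$.
\begin{itemize}
  \item Under $Q^\epsilon$, $Y^\epsilon$ satisfies $dY_s^\epsilon=\sigma(s)\,d\overline W_s$ with $\overline W$ a Brownian motion, as in the proof of Proposition~\ref{prop:convergence-Ldelta}. Moreover, $(X^{[1]},V)$ has the same law as under $P$. Is $\overline W$ independent of $(X^{[1]},V)$ under $Q^\epsilon$? This is the standard fact used in the Kallianpur--Striebel formula: $Y^\epsilon$ is a $Q^\epsilon$-Brownian motion independent of the signal $(X^\epsilon,V)$, and hence of $X^{[1]}$, which is a functional of $V$.
  \item Conditionally on $X^{[1]}$, the exponent of $(\tilde Z_t^\epsilon)^q$ is Gaussian. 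Write $g_s=\epsilon H_s^{[1]}=\bar c(s)\bar X_s$. Then $E_{Q^\epsilon}[(\tilde Z_t^\epsilon)^q\mid X^{[1]}]=\exp\bigl(\tfrac{q^2-q}{2}\int_0^t g_s^\top R^{-1}g_s\,ds\bigr)$.
  \item The integral $\int_0^t g_s^\top R^{-1}g_s\,ds$ is a quadratic functional of the Gaussian process $X^{[1]}$. It therefore has exponential moments only for small coefficients, so finiteness for all $q$ is not automatic.
\end{itemize}

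This is the main obstacle, and I see two ways around it.

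\emph{Direct route.} Use the explicit linear-Gaussian structure. The expectation of $\exp(\lambda\int_0^t|X^{[1]}_s|^2ds)$ is finite as long as the associated Riccati equation does not blow up on $[0,t]$. That holds only for $\lambda$ small relative to $t$, so this route may fail for large $q$ or $t$.

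\emph{Robust route.} Avoid high moments of $\tilde Z$ altogether and use only $\tilde Z_t^\epsilon\in L^1(Q^\epsilon)$, which holds with $E_{Q^\epsilon}[\tilde Z_t^\epsilon]=1$ because $\tilde Z$ is a true martingale (Novikov locally, or linear-Gaussian theory). Then localize. Set $A_N=\{\int_0^t|g_s|^2ds\le N\}$ and apply the conditional Gaussian computation on $A_N$. This shows $E_{Q^\epsilon}[(\tilde Z_t^\epsilon)^q1_{A_N}]\le e^{CN}$, with $A_N$ depending only on $X^{[1]}$ and $Q^\epsilon(A_N^c)\to0$.

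Next I treat the numerator. On $A_N$, conditional Hölder gives
\begin{align*}
  \bigl|E_{Q^\epsilon}[(F_n-F)\tilde Z_t^\epsilon1_{A_N}\mid\mathcal Y_t^\epsilon]\bigr|
  \le E_{Q^\epsilon}[|F_n-F|^p\mid\mathcal Y_t^\epsilon]^{1/p}\,
  E_{Q^\epsilon}[(\tilde Z_t^\epsilon)^q1_{A_N}\mid\mathcal Y_t^\epsilon]^{1/q}.
\end{align*}
The first factor tends to $0$ in $L^1(Q^\epsilon)$, since $E[E[|F_n-F|^p\mid\mathcal Y]]=\|F_n-F\|_p^p$. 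Passing to a subsequence, it therefore converges to $0$ a.s. The second factor is a.s. finite.

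The complementary part $E_{Q^\epsilon}[|F_n-F|\tilde Z_t^\epsilon1_{A_N^c}\mid\mathcal Y_t^\epsilon]$ needs a different treatment. If the direct route gives $\tilde Z_t^\epsilon\in L^q(Q^\epsilon)$, then I use that bound directly and skip the localization. Otherwise, I first choose $N$ and then extract subsequences by a diagonal argument over $N\in\mathbb N$. What remains is to show that this tail is a.s. small uniformly in $n$ for large $N$. That reduces to uniform integrability of $\{|F_n-F|\tilde Z_t^\epsilon\}$, which follows from $L^p$-boundedness of $F_n-F$ once $\tilde Z$ has some moment above $q$. I would try first to verify that moment directly via the Riccati bound for small $\epsilon$.

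Finally, dividing the numerator by the a.s. positive denominator gives $\tilde E_t^\epsilon[F_{n_i}]\to\tilde E_t^\epsilon[F]$ $Q^\epsilon$-a.s., which is the statement of the proposition.
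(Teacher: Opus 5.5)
Your proposal has a genuine gap: the tail term
\[
E_{Q^\epsilon}\bigl[(F_n-F)\tilde Z_t^\epsilon 1_{A_N^c}\bigm|\mathcal Y_t^\epsilon\bigr]
\]
is never controlled.
\begin{itemize}
  \item The event $A_N$ is determined by $X^{[1]}$, not by the observations. It therefore cannot localize the conditional expectation pathwise.
  \item The uniform integrability you fall back on needs an unconditional moment of $\tilde Z_t^\epsilon$ of order above $q$ under $Q^\epsilon$. You had just observed that such a moment may be infinite, so the argument is circular.
  \item The Riccati route only works when $\epsilon^2(q^2-q)$ is small relative to $t$. The proposition is claimed for every $p>1$ and fixed $\epsilon$, $t\le T$.
\end{itemize}
The localization adds nothing. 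If uniform integrability were available, $(F_n-F)\tilde Z_t^\epsilon\to0$ in $L^1(Q^\epsilon)$ directly, with no need for $A_N$.

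The missing idea is that only the \emph{conditional} moment is needed. The conditional H\"older bound
\[
\tilde E_t^\epsilon[|F_n-F|]\le\frac{E_{Q^\epsilon}[|F_n-F|^p\mid\mathcal Y_t^\epsilon]^{1/p}\,E_{Q^\epsilon}[(\tilde Z_t^\epsilon)^q\mid\mathcal Y_t^\epsilon]^{1/q}}{E_{Q^\epsilon}[\tilde Z_t^\epsilon\mid\mathcal Y_t^\epsilon]}
\]
holds pathwise. It therefore suffices that $E_{Q^\epsilon}[(\tilde Z_t^\epsilon)^q\mid\mathcal Y_t^\epsilon]<\infty$ a.s. Your subsequence argument for the first factor then finishes the proof. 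This is exactly the paper's proof, which takes the conditional moment bound from Lemma~A.3 of the author's earlier paper.

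Your moment computation went in the unfavourable direction. Conditioning on the signal and integrating out $Y^\epsilon$ gives $\exp\bigl(\tfrac{q^2-q}{2}\int_0^t g_s^\top R(s)^{-1}g_s\,ds\bigr)$, whose quadratic term has a positive coefficient. Condition on $\mathcal Y_t^\epsilon$ instead, which under $Q^\epsilon$ is independent of $(X^{[1]},V)$, as you note. Then the quadratic term enters with a negative sign:
\[
(\tilde Z_t^\epsilon)^q\le\exp\Bigl(q\int_0^t g_s^\top R(s)^{-1}\,dY_s^\epsilon\Bigr).
\]
Given the observations, the exponent is a Gaussian linear functional of $X^{[1]}$ with a.s.\ finite conditional variance. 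This can be seen, for example, by integrating by parts against $\int_0^\cdot R^{-1}\,dY^\epsilon$. Such a functional has all exponential moments, so the conditional $q$-th moment is a.s.\ finite for every $q$. No localization or uniform integrability is required.
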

\begin{proof}
  Define $q>1$ by $1/p+1/q=1$. Then, we can show that $E_{Q^\epsilon}[|\tilde{Z}_t|^q | \mathcal{Y}_t^\epsilon] < \infty$ in the same way as Lemma A.3 in \citet{kurisaki2026ks}. Therefore, by conditional H\"older's inequality, we have
  \begin{align*}
    \tilde{E}_t^\epsilon[|F_n-F|]&=\frac{E_{Q^\epsilon}[|F_n-F|\tilde{Z}_t|\mathcal{Y}_t]}{E_{Q^\epsilon}[\tilde{Z}_t|\mathcal{Y}_t]}\\
    &\leq \frac{E_{Q^\epsilon}[|F_n-F|^p|\mathcal{Y}_t]^\frac{1}{p}E_{Q^\epsilon}[|\tilde{Z}_t|^q|\mathcal{Y}_t]^\frac{1}{q}}{E_{Q^\epsilon}[\tilde{Z}_t|\mathcal{Y}_t]}.
  \end{align*}
  Furthermore, since we have
  \begin{align*}
    E_{Q^\epsilon}\left[ E_{Q^\epsilon}[|F_n-F|^p|\mathcal{Y}_t] \right]=E_{Q^\epsilon}[|F_n-F|^p]\to 0~~(n\to \infty),
  \end{align*}
  we can take a subsequence such that $E_{Q^\epsilon}[|F_{n_i}-F|^p|\mathcal{Y}_t]\xrightarrow{\mathrm{a.s.}} 0~(i\to \infty)$. This implies the desired result.
\end{proof}

By Propositions \ref{prop:convergence-Ldelta} and \ref{prop-convergence-under-tilde-E}, we obtain a sequence $\delta_i \to 0$ such that 
\begin{align}
  \label{eq:convergence-Et-Ldelta}\tilde{E}_t[L^{\delta_i}]\xrightarrow{Q^\epsilon\textrm{-a.s.}} \tilde{E}_t[L]~~(i\to \infty).
\end{align}

\subsection{Wick decomposition}\label{sec:Wick-decomposition}
Now, we consider the Wick decomposition of the left-hand side of \eqref{eq:convergence-Et-Ldelta}. The following Gaussian property of $\bar{X}=(X^{[1]},V)$ under $\tilde{E}_t$ is crucial to the decomposition. We first define a class to which each term of the decomposed sum belongs.
\begin{definition}
  (1) Let $\mathcal{A}_{t,\delta}^k$ be defined as follows.  Its elements are
  admissible tensor-product expressions $F(u_1,\ldots,u_k;t)~(0\leq u_1\leq \cdots \leq u_k \leq t-\delta)$ constructed
  from the following factors:
  \begin{itemize}
    \item $f_r(u_i)$, where $f_r$ is an arbitrary deterministic continuous
      tensor-valued function of the appropriate type.
    \item $\Phi_r(u_i,u_{i+1})$ for $i=1,\ldots,k-1$, and
      $\Phi_r(u_k,t)$, where $\Phi_r$ is the fundamental solution of an
      arbitrary tensor-valued linear differential equation of the form
      \eqref{eq:Phi-r}, with a continuous coefficient $A_r$.
    \item $\mu_t,~\mu_{u_i;t},~\Gamma(u_i,u_j;t),~\Gamma(t,u_i;t).$
    \item $\tilde{E}_t[\Delta_{u_i}\bar{X}]
      =\mu_{u_i+\delta;t}-\mu_{u_i;t}$, where
      $\Delta_{u_i}\bar{X}=\bar{X}_{u_i+\delta}-\bar{X}_{u_i}$.
    \item ${\rm Cov}_{\tilde{E}_t}(\bar{X}_{v},\Delta_{u_i}\bar{X})
      =\Gamma(v,u_i+\delta;t)-\Gamma(v,u_i;t)$, where
      $v\in\{u_1,\ldots,u_k,t\}$.
    \item ${\rm Cov}_{\tilde{E}_t}(\Delta_{u_i}\bar{X},
      \Delta_{u_j}\bar{X})
      =\{\Gamma(u_i+\delta,u_j+\delta;t)
      -\Gamma(u_i+\delta,u_j;t)\}
      -\{\Gamma(u_i,u_j+\delta;t)-\Gamma(u_i,u_j;t)\}$.
    \item $\Delta_{u_i}\eta=\eta_{u_i+\delta}-\eta_{u_i}$, where
      $\eta_u$ is one of
      $u$, $\int_0^u R(r)^{-1}\,dr$, and
      $\int_0^u R(r)^{-1}\,dY_r^\epsilon$.
  \end{itemize}
  We further require each of the difference labels
  $\Delta_{u_1},\ldots,\Delta_{u_k}$ to occur exactly once in an expression
  $F(u_1,\ldots,u_k;t)\in\mathcal{A}_{t,\delta}^k$.\\
  (2) Let $\mathcal{B}_{t,\delta}^k$ be the set of expressions
  \begin{align*}
    \sum_{i=1}^N S_i(t)[F^i(u_1,\ldots,u_k;t)]
  \end{align*}
  where $F^i(u_1,\ldots,u_k;t)\in\mathcal{A}_{t,\delta}^k$ and
    $S_i(t)$ is a deterministic, locally bounded linear contraction map of the
    appropriate tensor type for $i=1,\ldots,N$.
\end{definition}

\paragraph{Examples.}
For instance, if $k=2$, then the tensor-product expression
\begin{align*}
  &f_1(u_1)\otimes\Phi_1(u_1,u_2)\otimes\Gamma(t,u_2;t)
  \otimes\tilde E_t[\Delta_{u_1}\bar X]\otimes\Delta_{u_2}\eta
  \in\mathcal A_{t,\delta}^2.
\end{align*}
Here the two difference labels occur, respectively, in
$\tilde E_t[\Delta_{u_1}\bar X]$ and $\Delta_{u_2}\eta$.  As an example of
an expression in $\mathcal B_{t,\delta}^1$, consider the matrix product
\begin{align*}
  \left\{
    \bar a(t)-\Gamma(t)\bar c(t)^\top R(t)^{-1}\bar c(t)
  \right\}
  \Gamma(t,u;t)\tilde E_t[\Delta_u\bar X].
\end{align*}
Indeed, this is obtained by applying a deterministic, locally bounded linear
contraction to
$\Gamma(t,u;t)\otimes\tilde E_t[\Delta_u\bar X]
\in\mathcal A_{t,\delta}^1$.

\begin{definition}
  (1) Let $\mathcal{I}_{t,\delta}^k$ be the collection of expressions of the form
  \begin{align*}
    \sum_{\bm j\in D_\delta^k(t)}F(s_{j_1},\ldots,s_{j_k};t)
  \end{align*}
  with $F(u_1,\ldots,u_k;t)\in\mathcal{A}_{t,\delta}^k$.\\
  (2) Let $\mathcal{J}_{t,\delta}^k$ be the collection of expressions of the same form constructed from $\mathcal{B}_{t,\delta}^k$.
  For $k=0$, the same notation is used with the convention that there is no
  summation and no difference label.
\end{definition}

Wick's formula now gives the following immediate consequence.
\begin{proposition}\label{prop:discrete-Wick-decomposition}
  The expectation $\tilde{E}_t[L^\delta]$ of \eqref{eq:Ldelta} is a member of $\displaystyle \mathcal{J}_{t,\delta}^k$. 
\end{proposition}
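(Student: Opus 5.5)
The plan is to expand $L^\delta$ termwise, pull the conditional expectation inside the finite sum, and then evaluate each summand by Wick's formula for the Gaussian process $\bar X$ under $\tilde E_t$ (Theorem~\ref{theorem:linear-smoothing}).

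First, since $L^\delta$ is a finite sum over $\bm j\in D_\delta^k(t)$, linearity of $\tilde E_t$ lets us write
\[
\tilde E_t[L^\delta]=\sum_{\bm j\in D_\delta^k(t)}\tilde E_t\bigl[\text{summand}_{\bm j}\bigr].
\]
Each summand has deterministic factors $\Phi_r(s_{j_i},s_{j_{i+1}})$, $\Phi_k(s_{j_k},t)$ and $f_r(s_{j_i})$, which come outside $\tilde E_t$. What remains is the random tensor
\[
(X_t^{[1]})^{\otimes q}\otimes(X_{s_{j_1}}^{[1]})^{\otimes p_1}\otimes\cdots\otimes(X_{s_{j_k}}^{[1]})^{\otimes p_k}\otimes\Delta_{s_{j_1}}\eta^1\otimes\cdots\otimes\Delta_{s_{j_k}}\eta^k .
\]
Three cases arise for each increment. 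If $d\eta^i=r_i(t)\,dt$, the increment $\Delta\eta^i=\int_{s_j}^{s_j+\delta}r_i$ is deterministic and is of the listed type $\Delta_{u_i}\eta$. If $d\eta^i=R^{-1}dY^\epsilon$, the increment is $\mathcal Y_t^\epsilon$-measurable, because $s_{j_k+1}<t$. It therefore comes out of $\tilde E_t$ as a factor $\Delta_{u_i}\eta$. If $d\eta^i=dV$, the increment is $\Delta_{s_{j_i}}V$, a linear image of $\Delta_{s_{j_i}}\bar X$. We write it as $P_V\Delta_{s_{j_i}}\bar X$ with the constant projection $P_V$ absorbed into the contraction $S_i(t)$. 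Likewise $X^{[1]}_v=P_X\bar X_v$. Since everything outside $\tilde E_t$ is $\mathcal Y_t^\epsilon$-measurable and enters multilinearly, the contraction map $S(t)$ commutes with $\tilde E_t$.

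Second, I would apply Wick's (Isserlis') formula to the remaining product. It is a product of components of the jointly Gaussian vector $(\bar X_t,\bar X_{s_{j_1}},\dots,\bar X_{s_{j_k}},\Delta_{s_{j_i}}\bar X)$ under $\tilde E_t$; Theorem~\ref{theorem:linear-smoothing} guarantees the vanishing of cumulants of order three and higher. Writing each factor as mean plus centred part, the expectation becomes a sum over subsets and pair partitions. Each term is a tensor product of the following factors:
\begin{itemize}
\item means $\mu_t$, $\mu_{s_{j_i};t}$ and $\tilde E_t[\Delta\bar X]=\mu_{u+\delta;t}-\mu_{u;t}$;
\item covariances $\Gamma(u_i,u_j;t)$, $\Gamma(t,u_i;t)$ and $\Gamma(t,t;t)=\Gamma(t)$, together with $\operatorname{Cov}(\bar X_v,\Delta_{u_i}\bar X)$ and $\operatorname{Cov}(\Delta_{u_i}\bar X,\Delta_{u_j}\bar X)$.
\end{itemize}
Each factor is exactly of one of the types allowed in $\mathcal A^k_{t,\delta}$. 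Pairings within the same block $X_{u_i}^{\otimes p_i}$ produce $\Gamma(u_i,u_i;t)$, which is still allowed. Because each increment appears exactly once in the original product, each label $\Delta_{u_i}$ appears exactly once in every Wick term: either as $\Delta_{u_i}\eta$, as a mean of $\Delta\bar X$, or inside exactly one covariance. The only covariance factor that could contain two labels is $\operatorname{Cov}(\Delta_{u_i}\bar X,\Delta_{u_j}\bar X)$, and its two labels are distinct, which is still consistent with the definition.

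Third, the reordering of tensor slots needed to put each Wick term in the canonical form $F\in\mathcal A^k_{t,\delta}$, and the projections $P_X,P_V$, are deterministic linear maps. Composing them with $S(t)$ gives a locally bounded contraction $S_i(t)$. The number of Wick terms is finite and independent of $\bm j$. Hence each summand equals $\sum_i S_i(t)[F^i(s_{j_1},\dots,s_{j_k};t)]$ with the same $F^i\in\mathcal A^k_{t,\delta}$ for all $\bm j$, and summing over $D_\delta^k(t)$ yields an element of $\mathcal J^k_{t,\delta}$.

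The main obstacle is bookkeeping rather than analysis. One must check that the Wick decomposition can be written uniformly in $\bm j$, with the same finite index set and the same $F^i$. One must also check the one-label-per-index rule and the measurability of the $dY$ increments (which is why $D_\delta$ requires $s_{j_r+1}<t$). I would handle this by fixing, once and for all, an abstract list of Gaussian slots and taking pair partitions of that list, independent of the grid.
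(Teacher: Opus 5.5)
Your proposal is correct and follows the same route as the paper. The paper's proof simply applies Wick's formula to the finite-dimensional Gaussian vector formed by the $X^{[1]}$ and $V$ factors in $L^\delta$, and observes that every resulting term is a deterministic contraction of the fields listed in $\mathcal A_{t,\delta}^k$, summed over $D_\delta^k(t)$. You supply the bookkeeping that the paper leaves implicit: pulling out the deterministic and $\mathcal Y_t^\epsilon$-measurable increments, checking one occurrence per difference label, and ensuring uniformity in $\bm j$. The only slip is that $\Gamma(t,t;t)=\Gamma(t)$, which arises from pairing two copies of $X_t^{[1]}$ when $q\geq 2$, is not literally one of the listed fields; it is deterministic, however, and can be absorbed into the contraction $S_i(t)$.
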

\begin{proof}
  Apply Wick's formula to the finite-dimensional Gaussian vector formed by
  the factors $X^{[1]}$ and $V$ in \eqref{eq:Ldelta}.  Every resulting term
  is a deterministic contraction of the factors listed in the definition of
  $\mathcal A_{t,\delta}^k$, and hence its sum over $D_\delta^k(t)$ belongs to
  $\mathcal J_{t,\delta}^k$.
\end{proof}

Now we define the degree of an expression in any of the classes $\mathcal{A}_{t,\delta}^k$, $\mathcal{B}_{t,\delta}^k$, $\mathcal{I}_{t,\delta}^k$ and $\mathcal{J}_{t,\delta}^k$. This notion is based on the observation given in Remark \ref{rem:finite-closure-mechanism}.

\begin{definition}
  (1) For $F(u_1,\ldots,u_k;t)\in\mathcal{A}_{t,\delta}^k$, let
  $N(F)$ be the total number of occurrences, among the tensor factors in the
  chosen expression $F$, of the following fields:
  \begin{align*}
    &\mu_{u_i;t},~\Gamma(u_i,u_j;t),~\tilde{E}_t[\Delta_{u_i} \bar{X}],\\
    &{\rm Cov}_{\tilde{E}_t}(\bar{X}_{u_j},\Delta_{u_i} \bar{X}),~{\rm Cov}_{\tilde{E}_t}(\Delta_{u_i} \bar{X},\Delta_{u_j} \bar{X}),
  \end{align*}
  We define the \emph{degree} of this expression by
  \begin{align*}
    {\rm deg}F=k+N(F).
  \end{align*}
  (2) For an expression
  \begin{align*}
    \sum_{i=1}^N S_i(t)[F^i(u_1,\ldots,u_k;t)]\in \mathcal{B}_{t,\delta}^k,
  \end{align*}
  define its degree by
  \begin{align*}
    {\rm deg}\!\left(\sum_{i=1}^N S_i(t)[F^i]\right)
      :=\max_{i=1,\ldots,N}{\rm deg}F^i.
  \end{align*}
  (3) For an expression
  \begin{align*}
    I_t^\delta=\sum_{\bm j\in D_\delta^k(t)}
      G(s_{j_1},\ldots,s_{j_k};t)\in \mathcal{J}_{t,\delta}^k,
  \end{align*}
  we define its degree by ${\rm deg}I_t^\delta={\rm deg}G$. Furthermore,
  for an element of
  \begin{align*}
    \mathcal{J}_{t,\delta}:=\bigoplus_{\ell=0}^\infty
      \mathcal{J}_{t,\delta}^\ell,
  \end{align*}
  we define its degree as the maximum of the degrees of its nonzero
  homogeneous terms.
\end{definition}

\paragraph{Example of the degree.}
For $k=2$, consider
\begin{align*}
  F(u_1,u_2;t)
  ={}&f_1(u_1)\otimes\mu_{u_1;t}
  \otimes{\rm Cov}_{\tilde E_t}
    (\bar X_{u_2},\Delta_{u_1}\bar X)
  \otimes\Delta_{u_2}\eta.
\end{align*}
The two counted fields are $\mu_{u_1;t}$ and
${\rm Cov}_{\tilde E_t}(\bar X_{u_2},\Delta_{u_1}\bar X)$, so that
$N(F)=2$ and ${\rm deg}F=2+2=4$.  Consequently,
\begin{align*}
  \sum_{\bm j\in D_\delta^2(t)}
  F(s_{j_1},s_{j_2};t)
\end{align*}
also has degree four.

\begin{remark}
  The degree introduced above is attached to the displayed admissible
  representation, not to the tensor-valued function represented by it.  The
  same function may admit different admissible representations with different
  degrees.  Throughout this section, an element of
  $\mathcal{B}_{t,\delta}^k$ or $\mathcal{J}_{t,\delta}^k$ is therefore
  understood to be equipped with its chosen representation.  No invariance of
  the degree under a change of representation is asserted or needed.
\end{remark}

\subsection{Convergence of admissible expressions and the generalized It\^o formula}\label{sec:generalized-Ito}
We now prove the convergence of expressions in $\mathcal{I}_{t,\delta}^k$ by induction on the degree. We first state that the new term obtained by differentiating the admissible expression with respect to the terminal time has degree strictly lower than that of the original expression.

\begin{proposition}\label{prop:triangular-terminal-differentiation}
  Let $F(u_1,\ldots,u_k;t)\in\mathcal{A}_{t,\delta}^k$ be given with a
  fixed admissible representation. Then there exist expressions
  \begin{align*}
    G_r,H_\ell\in\mathcal{B}_{t,\delta}^k
    ~~(r=1,\cdots,M,\ \ell=1,\cdots,N),
  \end{align*}
  a tensor-valued function $P:\mathbb{R}_+ \to T_{q,q}(\mathbb R)$,
  and $K_r(t),L_\ell(t)$ of the appropriate types such that
  \begin{align}
    \begin{split}
      d_tF(u_1,\ldots,u_k;t)={}&P(t)F(u_1,\ldots,u_k;t)\,dt\\
    &+\sum_{r=1}^{M}K_r(t)[G_r(u_1,\ldots,u_k;t)]\,dt
      +\sum_{\ell=1}^{N}
        \bigl(L_\ell(t)[H_\ell(u_1,\ldots,u_k;t)]\bigr)^\top
        dY_t^\epsilon.\\
      &(0\leq u_1\leq\cdots\leq u_k<t-\delta)
    \end{split}    
    \label{eq:triangular-terminal-differentiation}
  \end{align}
  Here, each $L_\ell(t)[H_\ell]$ is a $d_2$-dimensional vector whose components
  have the same tensor type as $F$, and
  \begin{align*}
    {\rm deg}G_r<{\rm deg}F,\quad {\rm deg}H_\ell<{\rm deg}F,
  \end{align*}
  and for every $T>0$,
  \begin{align*}
    \int_0^T |P(v)|dv+ \sum_{r=1}^{M}\int_0^T|K_r(v)|^2\,dv
    +\sum_{\ell=1}^{N}\int_0^T|L_\ell(v)|^2\,dv<\infty
    \qquad\text{a.s.}
  \end{align*}
\end{proposition}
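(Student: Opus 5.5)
The plan is to apply the product rule factor by factor. The variables $u_1,\ldots,u_k$ stay fixed with $u_k<t-\delta$, and we differentiate the admissible expression $F(u_1,\ldots,u_k;t)$ in $t$. The needed terminal-time dynamics of each factor come from Theorem~\ref{theorem:linear-smoothing}. The factors $f_r(u_i)$, $\Phi_r(u_i,u_{i+1})$ and $\Delta_{u_i}\eta$ do not depend on $t$, since $u_i+\delta<t$. For the factor $\Phi_r(u_k,t)$, equation \eqref{eq:Phi-r} gives $d_t\Phi_r(u_k,t)=A_r(t)\Phi_r(u_k,t)\,dt$. This is of the form ``deterministic linear map times the same factor'', so it contributes to $P(t)$ without changing the degree.

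\textbf{Step 1: classify the $t$-dependent fields and their differentials.}
\begin{itemize}
\item Endpoint factors $\Gamma(t,u_i;t)$ obey \eqref{eq:d-gamma-tut}. They reproduce themselves under the linear map $\bar a(t)-\Gamma(t)\bar c(t)^\top R(t)^{-1}\bar c(t)$, which is absorbed into $P(t)$.
\item The factor $\mu_t$ has the Kalman--Bucy dynamics of Theorem~\ref{theorem:linear-smoothing}. Its differential is a linear map of $\mu_t$ itself plus $\Gamma(t)\bar c^\top R^{-1}dY_t$. The first part goes into $P(t)$; the second is a $dY$-term whose coefficient $\Gamma(t)$ is deterministic, so the degree does not increase. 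This case therefore also lies in the degree-nonincreasing part, and I would put it into $P(t)$ or into $L_\ell[H_\ell]$ with the degree of the remaining factors.
\item The factor $\mu_{u_i;t}$ obeys \eqref{eq:d-mu-s}. Its differential replaces $\mu_{u_i;t}$ by $\Gamma(t,u_i;t)^\top\bar c^\top R^{-1}\{dY_t-\bar c\mu_t dt\}$. A counted field is thus traded for an uncounted endpoint field $\Gamma(t,u_i;t)$, with an extra uncounted $\mu_t$ in the $dt$ part, so $N$ drops by one.
\item The factor $\Gamma(u_i,u_j;t)$ obeys \eqref{eq:d-gamma-sut}. It becomes $-\Gamma(t,u_i;t)^\top\bar c^\top R^{-1}\bar c\,\Gamma(t,u_j;t)\,dt$, again lowering $N$ by one.
\item The increment fields $\tilde E_t[\Delta_{u_i}\bar X]$, $\mathrm{Cov}(\bar X_v,\Delta_{u_i}\bar X)$ and $\mathrm{Cov}(\Delta_{u_i}\bar X,\Delta_{u_j}\bar X)$ are differences of the previous objects evaluated at $u_i$ and $u_i+\delta$, both below $t$. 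Their differentials are differences of endpoint factors, namely $\Gamma(t,u_i+\delta;t)-\Gamma(t,u_i;t)=\mathrm{Cov}(\bar X_t,\Delta_{u_i}\bar X)$, times deterministic coefficients and $dY_t$ or $\mu_t dt$.
\end{itemize}

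The main obstacle is that the differentiated increment field must itself be an admissible factor carrying the label $\Delta_{u_i}$ while being uncounted. Otherwise the label-exactly-once rule fails, or the degree does not drop. $\mathrm{Cov}(\bar X_t,\Delta_{u_i}\bar X)$ is listed with $v=t$ and is not among the counted fields, so it works. I would verify this carefully for each of the three increment types. For the mixed case $\mathrm{Cov}(\Delta_{u_i}\bar X,\Delta_{u_j}\bar X)$, the derivative via \eqref{eq:d-gamma-sut} is the product of two such endpoint covariances, so each label still appears exactly once.

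\textbf{Step 2: assemble.} Apply the Leibniz rule to the tensor product. All $t$-dependent factors except $\mu_t$ have finite variation, and the stochastic parts come only from $\mu_t$ and from the fields $\mu_{u_i;t}$ and $\tilde E_t[\Delta\bar X]$. Hence cross-variation terms appear only between pairs of such factors. These $d\langle\cdot\rangle_t$ terms equal deterministic multiples of $\Gamma(t,\cdot;t)\otimes\Gamma(t,\cdot;t)$ and replace two counted fields by uncounted ones, so their degree is strictly lower; they go into the $K_r[G_r]$ terms. Collect all degree-preserving contributions into $P(t)F$, with $P(t)$ a sum of $A_r(t)$ and $\bar a-\Gamma\bar c^\top R^{-1}\bar c$ acting on the appropriate tensor slots. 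The $\mu_t$ noise part needs separate handling: it cannot be written as $P(t)F$, because $F$ contains $\mu_t$ rather than the replacement. I would treat it by noting that removing $\mu_t$ does not increase the count, while the admissible replacement has the same $N$. If a strict drop is really required, I would exclude $\mu_t$ from $F$ by pulling it into the contraction, or refine the bookkeeping.

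\textbf{Step 3: integrability.} The coefficients $K_r(t)$ and $L_\ell(t)$ are built from $\bar a$, $\bar c$, $R^{-1}$, $\Gamma(t)$, $A_r$ and $\mu_t$. These are continuous, except that $\mu_t$ is continuous a.s., and $R^{-1}$ is bounded by \eqref{eq-assumption-sigma-2}. This yields the stated a.s. finiteness of $\int_0^T|P|+\int_0^T|K_r|^2+\int_0^T|L_\ell|^2$.
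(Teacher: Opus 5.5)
Your route is the paper's: apply the tensor-product (It\^o) rule to the fixed representation, use \eqref{eq:Phi-r} and \eqref{eq:d-mu-s}--\eqref{eq:d-gamma-tut} together with their finite differences, and sort the terms by degree. Your case analysis for $\mu_{u_i;t}$, $\Gamma(u_i,u_j;t)$ and the three increment fields is correct and more explicit than the paper's sketch. Their $t$-derivatives land on the uncounted factors $\Gamma(t,u_i;t)$ and ${\rm Cov}_{\tilde E_t}(\bar X_t,\Delta_{u_i}\bar X)$, and each label is still used exactly once.

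The case you leave open, a factor $\mu_t$ in $F$, is a genuine gap, and neither of your suggested fixes closes it. Write $F=\mu_t\otimes\tilde F$. The $dY_t^\epsilon$-part of $d\mu_t$ contributes $\bigl(\Gamma(t)\bar c(t)^\top R(t)^{-1}dY_t^\epsilon\bigr)\otimes\tilde F$, and ${\rm deg}\tilde F={\rm deg}F$. If $\mu_t$ occurs twice, the It\^o correction $\Gamma(t)\bar c(t)^\top R(t)^{-1}\bar c(t)\Gamma(t)\,dt\otimes\tilde F$ is a $dt$-term of the same degree. So your claim that every cross-variation removes two counted fields fails for that pair.

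You cannot move $\mu_t$ into the contraction, because the maps $S_i(t)$ in $\mathcal B_{t,\delta}^k$ are deterministic. No rebookkeeping helps either, since ${\rm deg}=k+N\geq k$ on $\mathcal B_{t,\delta}^k$. Already $F=\mu_t\otimes\Delta_{u_1}\eta$ with $\eta_u=u$ has degree $1$, so every $H_\ell\in\mathcal B_{t,\delta}^1$ would need degree $0$, which is impossible. Yet $d_tF$ has the nonvanishing $dY_t^\epsilon$-part $\delta\,\Gamma(t)\bar c(t)^\top R(t)^{-1}dY_t^\epsilon$ whenever $\Gamma(t)\bar c(t)^\top\neq0$. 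Thus the strict inequality fails as stated for such $F$. The paper's one-line proof (``the definition of the degree gives the strict inequalities'') passes over the same point.

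The repair is to restrict the statement to representations in which $\mu_t$ is not a factor of $F$, and this costs nothing. The $\mu_t$ produced by the $dt$-part of $d_t\mu_{u_i;t}$ (and of the increment fields) can be placed in the random coefficient $K_r(t)$ rather than in $G_r$. This is exactly what the a.s.\ integrability clause permits, so the restricted class is closed under your construction. The $\mu_t$ factors arising from the Wick pairing with $(X_t^{[1]})^{\otimes q}$, or from the boundary value $z=\bar a(t)\mu_t$ in Lemma~\ref{lem:boundary-degree-reduction}, do not depend on $u_1,\ldots,u_k$. They can therefore be pulled out of the grid sum and handled by the ordinary It\^o product rule for $\mu_t^{\otimes m}\otimes I_t$. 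With that restriction your argument, including the integrability bound, goes through.
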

\begin{proof}
  Apply the tensor-product rule to the fixed admissible representation of
  $F$.  Equation \eqref{eq:Phi-r} and the differential identities
  \eqref{eq:d-mu-s}--\eqref{eq:d-gamma-tut}, together with their finite
  differences, give \eqref{eq:triangular-terminal-differentiation} after
  regrouping the terms.  The definition of the degree gives the strict
  inequalities, and the stated integrability follows from the standing local
  boundedness assumptions.
\end{proof}

In the remainder of this section, we fix an expression $F(s_{j_1},\cdots,s_{j_k};t) \in \mathcal{A}_{t,\delta}^k$ and 
\begin{align*}
    I_t^\delta=\sum_{\bm j\in D_\delta^k(t)}
      F(s_{j_1},\ldots,s_{j_k};t)\in \mathcal{I}_{t,\delta}^k,
  \end{align*}
  and assume the differential identity \eqref{eq:triangular-terminal-differentiation}.
  After applying a deterministic permutation or transpose of tensor indices
  when necessary, put the factor carrying $\Delta_{u_k}$ last and write
  \begin{align*}
    F(u_1,\ldots,u_k;t)=\alpha_{u_1,\ldots,u_k;t}\otimes (w_{u_1,\ldots,u_k+\delta;t}-w_{u_1,\ldots,u_k;t}),
  \end{align*}
  where $w_{u_1,\ldots,u_k;t}$ is one of the following:
  \begin{align*}
    &\mu_{u_k;t},\\
    &\Gamma(u_k,v;t),
      \qquad v\in\{u_1,\ldots,u_{k-1},t\},\\
    &\Gamma(u_k,u_i+\delta;t)-\Gamma(u_k,u_i;t),
      \qquad i\in\{1,\ldots,k-1\},\\
    &u_k,\qquad \int_0^{u_k}R(r)^{-1}\,dr,
      \qquad \int_0^{u_k}R(r)^{-1}\,dY_r^\epsilon.
  \end{align*}
Then the following lemma holds.
\begin{lemma}\label{lem:boundary-degree-reduction}
  If we write 
  \begin{align*}
    d_{u_k}w_{u_1,\ldots,u_k;t}=z_{u_1,\cdots,u_k;t}du_k+g(u_k)dY_{u_k}^\epsilon,
  \end{align*}
  then we have
  \begin{align*}
    Z_t^\delta
      :=\sum_{\bm j\in D_\delta^{k-1}(t)}
        \alpha_{s_{j_1},\ldots,s_{j_{k-1}},t;t}\otimes
        z_{s_{j_1},\ldots,s_{j_{k-1}},t;t}
      \in \mathcal{J}_{t,\delta}^{k-1},
  \end{align*}
  and its degree is strictly smaller than
  ${\rm deg}F$.
\end{lemma}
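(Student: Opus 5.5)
The plan is a case analysis over the six possible forms of $w_{u_1,\ldots,u_k;t}$. In each case I compute the $u_k$-differential $d_{u_k}w$, read off the drift $z$, and evaluate it at $u_k=t$. Two things must then be checked: that $\alpha_{\cdot,t;t}\otimes z_{\cdot,t;t}$, summed over $D_\delta^{k-1}(t)$, is an expression of $\mathcal J^{k-1}_{t,\delta}$, and that its degree is smaller than $\deg F$.

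The bookkeeping rests on how the degree changes when we pass from $F$ to the boundary term. Removing the last integration variable lowers $k$ by one. Setting $u_k=t$ turns $u_k$-dependent fields into endpoint objects: $\mu_{t;t}=\mu_t$, $\Gamma(t,u_i;t)$ and $\Gamma(t,t;t)=\Gamma(t)$. None of these is counted in $N$. Indeed, in $\alpha$ every counted field involving $u_k$, such as $\mu_{u_k;t}$ or $\Gamma(u_k,u_j;t)$, becomes uncounted, and the others are unchanged. Hence $N(\alpha_{\cdot,t;t})\le N(\alpha)$. It remains to compare $z$ with the factor $w_{\cdot,u_k+\delta}-w_{\cdot,u_k}$, which counts at most once, and to check that $z_{\cdot,t;t}$ contributes at most one counted field. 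Since $k$ has already dropped by one, this gives $\deg Z^\delta_t\le (k-1)+N(F)<\deg F$.

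The cases are as follows.
\begin{itemize}
\item For $w=u_k$ or $\int_0^{u_k}R^{-1}dr$, the drift $z$ is $1$ or $R(t)^{-1}$, which is deterministic.
\item For $w=\int_0^{u_k}R^{-1}dY$, we have $z=0$ and $g=R^{-1}$.
\item For $w=\mu_{u_k;t}$, I need the $s$-dynamics of the smoothing mean. Differentiating \eqref{eq:mu-expression} in $s$ uses $dE[\bar X_s]=\bar a(s)E[\bar X_s]ds$ and the $s$-derivative of $\Gamma(s,r;t)$. That derivative comes from the backward (Rauch--Tung--Striebel) representation in \citet{KURISAKI2026104997}, and it is of the form $\partial_s\Gamma(s,r;t)=\bar a(s)\Gamma(s,r;t)+\bar b\bar b^\top(\ldots)$. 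At $s=t$ this should yield $z_{t;t}=\bar a(t)\mu_t$ plus a deterministic contraction of endpoint quantities. In the one-dimensional case this reduces to the relation $\partial_s\mu^V_{s;t}|_{s=t}=0$ used in Section~\ref{section:1dim-example}.
\item For $w=\Gamma(u_k,v;t)$ with $v\in\{u_i,t\}$, and for its $\delta$-difference in $u_i$, I use the one-sided derivative formulas already quoted in Section~\ref{section:1dim-example}, namely the expressions for $\partial_u\Gamma(s,u;t)$. At $u_k=t$ the drift is $\bar a(t)\Gamma(t,v;t)$ plus, only when $v=t$, the jump term $\bar b\bar b^\top$. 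For the difference $\Gamma(t,u_i+\delta;t)-\Gamma(t,u_i;t)$ the jump term cancels, because $u_i+\delta<t$ on the admissible range. What remains is $\bar a(t)\,\mathrm{Cov}_{\tilde E_t}(\bar X_t,\Delta_{u_i}\bar X)$. This is an admissible factor; I would either list it as uncounted or note that it replaces a counted factor of the same kind, so the count does not increase.
\end{itemize}
In every case $z_{\cdot,t;t}$ is a deterministic locally bounded contraction, through $\bar a(t)$, $\bar b(t)$ and $R(t)$, of endpoint fields. Membership in $\mathcal B^{k-1}_{t,\delta}$ then follows. The label condition also holds: $\Delta_{u_k}$ has disappeared, and the labels $\Delta_{u_1},\dots,\Delta_{u_{k-1}}$ still occur exactly once each.

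The main obstacle is the case $w=\mu_{u_k;t}$. There I have to justify the $u_k$-differential with terminal time $t$ fixed and show that its drift at $u_k=t$ involves only endpoint objects. A subtlety is that $\mu_{s;t}$ has a genuine $dY$ part as well as the drift. I would take $z$ from the backward smoother equation $d_s\mu_{s;t}=\{\bar a(s)\mu_{s;t}+\bar b\bar b^\top\Gamma(s)^{-1}(\mu_{s;t}-\mu_s)\}ds$. This equation is assumed valid via the cited smoothing structure, with the pseudo-inverse if $\Gamma$ is singular. At $s=t$ the second term vanishes because $\mu_{t;t}=\mu_t$. Therefore $z_{t;t}=\bar a(t)\mu_t$ has no counted field and the degree drops.
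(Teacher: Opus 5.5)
Your argument is correct and has the same skeleton as the paper's proof. Both run a case analysis over the forms of $w$, evaluate the $u_k$-drift at $u_k=t$, and observe that every resulting factor is an uncounted endpoint object while the number of integration variables drops by one.

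The genuine difference is the case $w=\mu_{u_k;t}$. The paper differentiates \eqref{eq:mu-expression} in the state time and uses $\partial_s\Gamma(s,r;t)|_{s=t}=\bar a(t)\Gamma(t,r;t)$ for $r<t$. Since only the left derivative at $s=t$ is needed, this gives $z_{t;t}=\bar a(t)\mu_t$ at once, with no additional term. The $\bar b\bar b^\top$ jump of $\partial_s\Gamma(s,r;t)$ sits on the diagonal $r=s$, which is a null set for the $dY_r$-integral, so your hedge ``plus a deterministic contraction'' is unnecessary.

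You instead invoke the full backward RTS equation. It gives more than is needed, namely the dynamics of $s\mapsto\mu_{s;t}$ on all of $[0,t]$, but it involves $\Gamma(s)^{-1}$. Here $\Gamma(s)$ need not be invertible: $\Gamma(0)=0$, and if $\alpha\equiv0$ then $X^{[1]}=\beta(x_0)V$, so $\Gamma(s)$ has rank at most $m_1<d_1+m_1$. Neither the paper nor your proposal supplies the pseudo-inverse variant, so it would need its own proof; the paper's route involves no inversion at all.

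Your own backward equation also contradicts your remark that $\mu_{s;t}$ has a genuine $dY$ part. The filter $\mu_s$ enters it only as a coefficient of $ds$. So for fixed $t$ the map $s\mapsto\mu_{s;t}$ is absolutely continuous and $g=0$, exactly as the paper records.

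Two smaller points.

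First, your criterion that $z_{\cdot,t;t}$ contribute ``at most one counted field'' is too weak. In the $\eta$ cases and for $w=\Gamma(u_k,t;t)$, the removed factor is itself uncounted: it is $\Delta_{u_k}\eta$, respectively ${\rm Cov}_{\tilde E_t}(\bar X_t,\Delta_{u_k}\bar X)$. What you need is $N(z_{\cdot,t;t})\le N(w_{\cdot,u_k+\delta;t}-w_{\cdot,u_k;t})$. Your case analysis actually gives $N(z_{\cdot,t;t})=0$ throughout, so the conclusion stands.

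Second, taking the $\Gamma$-derivatives from Section~\ref{section:1dim-example} is both too narrow and circular. Those are one-dimensional block formulas, and the paper presents them as consequences of the boundary identities proved in this very lemma. Use (4.2) of \citet{KURISAKI2026104997}, as the paper does, or read the identity off the explicit form \eqref{eq:explicit-separated-Gamma}.
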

\begin{proof}
  The cases of $w_{u_1,\ldots,u_k;t}=u_k,\qquad \int_0^{u_k}R(r)^{-1}\,dr, \int_0^{u_k}R(r)^{-1}\,dY_r^\epsilon$ are clear. We consider the remaining choices of $w$ separately.  If
  $w_{u_1,\ldots,u_k;t}=\Gamma(u_k,v;t)$ with
  $v\in\{u_1,\ldots,u_{k-1},t\}$, then by using (4.2) of \citet{KURISAKI2026104997}, we have
  \begin{align}
    z_{u_1,\ldots,u_{k-1},t;t}
      =\partial_{u_k}\Gamma(u_k,v;t)|_{u_k=t}=\begin{cases}
        \bar a(t)\Gamma(t,v;t)&(v\neq t)\\
        \bar a(t)\Gamma(t)+\bar b(t)\bar b(t)^\top&(v=t)
      \end{cases}.\label{eq:d-Gamma}
  \end{align}

  From the same equation, if
  \begin{align*}
    w_{u_1,\ldots,u_k;t}
      =\Gamma(u_k,u_i+\delta;t)-\Gamma(u_k,u_i;t),
  \end{align*}
  then we have
  \begin{align*}
    z_{u_1,\ldots,u_{k-1},t;t}
      =\bar a(t)\{
          \Gamma(t,u_i+\delta;t)-\Gamma(t,u_i;t)
        \}~~(u_i+\delta<t).
  \end{align*}

  Next, consider the case of $w_{u_1,\ldots,u_k;t}=\mu_{u_k;t}$.
  Differentiation of \eqref{eq:mu-expression} with respect to the state
  time $s$, followed by
  \eqref{eq:d-Gamma}, gives
  \begin{align}
    \begin{split}
      \left.\partial_s\mu_{s;t}\right|_{s=t}
      &=\bar a(t)E[\bar X_t]
        +\bar a(t)\int_0^t\Gamma(t,r;t)\bar c(r)^\top R(r)^{-1}
          \{dY_r^\epsilon-\bar c(r)E[\bar X_r]\,dr\}\\
      &=\bar a(t)\mu_t.
    \end{split}\label{eq:ds-mu}    
  \end{align}
  Thus, when $w_{u_1,\ldots,u_k;t}=\mu_{u_k;t}$,
  \begin{align*}
    z_{u_1,\ldots,u_{k-1},t;t}=\bar a(t)\mu_t,
    \qquad g=0.
  \end{align*}

  In summary, it is clear from the definition that
  \begin{align*}
    \alpha_{u_1,\ldots,u_{k-1},t;t}\otimes
        z_{u_1,\ldots,u_{k-1},t;t} \in \mathcal{B}_{t,\delta}^{k-1}
  \end{align*}
  and its degree is strictly smaller than ${\rm deg}F$.  Taking the sum over
  $D_\delta^{k-1}(t)$ proves the result.
\end{proof}

Write the terminal-time differential of \(\alpha\) as
\begin{align}
  d_t\alpha_{u_1,\ldots,u_k;t}
  =A_{u_1,\ldots,u_k;t}\,dt
  +\chi_{u_1,\ldots,u_k;t}^{\top}dY_t^\epsilon.
  \label{eq:d-alpha}
\end{align}

\begin{lemma}
  \label{lem:uniform-Gaussian-mesh-estimates}
  Fix \(T>0\).  There exists a constant \(C_T\), independent of
  \(\delta,r,u_1,u_2,t_1,t_2\), such that, whenever
  \[
    0\leq r\leq u_1\leq t_1\leq T,
    \qquad
    0\leq r\leq u_2\leq t_2\leq T,
  \]
  we have
  \begin{align}
    &E_{Q^\epsilon}\left[
      \left|
      \sum_{\bm i\in D_\delta^{k-1}(r)}
      \alpha_{s_{i_1},\ldots,s_{i_{k-1}},u_1;t_1}
      \otimes
      z_{s_{i_1},\ldots,s_{i_{k-1}},u_2;t_2}
      \right|^2\right]\leq C_T,                         \label{eq:mesh-bound-alpha-z}\\
    &E_{Q^\epsilon}\left[
      \left|
      \sum_{\bm i\in D_\delta^{k-1}(r)}
      \alpha_{s_{i_1},\ldots,s_{i_{k-1}},u_1;t_1}
      \right|^2\right]\leq C_T,                         \label{eq:mesh-bound-alpha}\\
    &E_{Q^\epsilon}\left[
      \left|
      \sum_{\bm i\in D_\delta^{k-1}(r)}
      A_{s_{i_1},\ldots,s_{i_{k-1}},u_1;t_1}
      \otimes
      z_{s_{i_1},\ldots,s_{i_{k-1}},u_2;t_2}
      \right|^2\right]\leq C_T,                         \label{eq:mesh-bound-A-z}\\
    &E_{Q^\epsilon}\left[
      \left|
      \sum_{\bm i\in D_\delta^{k-1}(r)}
      A_{s_{i_1},\ldots,s_{i_{k-1}},u_1;t_1}
      \right|^2\right]\leq C_T,                         \label{eq:mesh-bound-A}\\
    &\sum_{p=1}^{d_2}E_{Q^\epsilon}\left[
      \left|
      \sum_{\bm i\in D_\delta^{k-1}(r)}
      \chi_{s_{i_1},\ldots,s_{i_{k-1}},u_1;t_1}^{p}
      \otimes
      z_{s_{i_1},\ldots,s_{i_{k-1}},u_2;t_2}
      \right|^2\right]\leq C_T,                         \label{eq:mesh-bound-chi-z}\\
    &\sum_{p=1}^{d_2}E_{Q^\epsilon}\left[
      \left|
      \sum_{\bm i\in D_\delta^{k-1}(r)}
      \chi_{s_{i_1},\ldots,s_{i_{k-1}},u_1;t_1}^{p}
      \right|^2\right]\leq C_T.                         \label{eq:mesh-bound-chi}
  \end{align}

  Furthermore, if
  \[
    0\leq r\leq u_1\leq u_1'\leq t_1\leq t_1'\leq T,
    \qquad
    0\leq r\leq u_2\leq t_2\leq t_2'\leq T,
  \]
  then we have
  \begin{align}
    &E_{Q^\epsilon}\Biggl[
      \Biggl|
      \sum_{\bm i\in D_\delta^{k-1}(r)}
      \Bigl\{
      \alpha_{s_{i_1},\ldots,s_{i_{k-1}},u_1';t_1'}
      \otimes z_{s_{i_1},\ldots,s_{i_{k-1}},u_2;t_2'}
      \notag\\[-1mm]
    &\hspace{48mm}
      -\alpha_{s_{i_1},\ldots,s_{i_{k-1}},u_1;t_1}
      \otimes z_{s_{i_1},\ldots,s_{i_{k-1}},u_2;t_2}
      \Bigr\}
      \Biggr|^2\Biggr]^{1/2}\notag\\
    &\qquad\leq
      C_T\{(u_1'-u_1)+(t_1'-t_1)^{1/2}
        +(t_2'-t_2)^{1/2}\},                            \label{eq:mesh-continuity-alpha-z}\\
    &\sum_{p=1}^{d_2}E_{Q^\epsilon}\Biggl[
      \Biggl|
      \sum_{\bm i\in D_\delta^{k-1}(r)}
      \Bigl\{
      \chi_{s_{i_1},\ldots,s_{i_{k-1}},u_1';t_1}^p
      -\chi_{s_{i_1},\ldots,s_{i_{k-1}},u_1;t_1}^p
      \Bigr\}
      \Biggr|^2\Biggr]^{1/2}\notag\\
    &\qquad\leq C_T(u_1'-u_1).                           \label{eq:mesh-continuity-chi}
  \end{align}
  The same assertions hold when one of the displayed tensor factors is
  replaced by any deterministic contraction or permutation allowed in
  \(\mathcal B_{t,\delta}^{k-1}\).
\end{lemma}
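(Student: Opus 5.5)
We would prove Lemma~\ref{lem:uniform-Gaussian-mesh-estimates} by reducing every displayed sum to an explicit Gaussian/It\^o object whose moments under $Q^\epsilon$ are controlled by the representation \eqref{eq:mu-expression} and the deterministic covariance $\Gamma$.

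\textbf{Step 1 (factor structure).} By Proposition~\ref{prop:triangular-terminal-differentiation}, Lemma~\ref{lem:boundary-degree-reduction} and \eqref{eq:d-alpha}, the objects $\alpha$, $z$, $A$ and $\chi$ are all (contractions of) admissible expressions in $\mathcal B_{t,\delta}^{k-1}$, with one remaining free argument $u_1$ or $u_2$. Each factor is one of the following:
\begin{itemize}
\item a deterministic continuous factor, namely $f_r$, $\Phi_r$, $\Gamma$ or a covariance difference;
\item a smoothing mean $\mu_{u;t}$ or its difference $\mu_{u+\delta;t}-\mu_{u;t}$;
\item an observation increment $\Delta\eta$.
\end{itemize}
By \eqref{eq:mu-expression}, $\mu_{u;t}=E[\bar X_u]+\int_0^t\Gamma(u,r;t)\bar c(r)^\top R(r)^{-1}\,d\overline Y_r$ is, under $Q^\epsilon$, a Wiener integral with deterministic kernel. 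Indeed, $dY^\epsilon=\sigma\,d\overline W$ by the Girsanov argument in the proof of Proposition~\ref{prop:convergence-Ldelta}, and the drift term is deterministic. Hence every summand is a polynomial in Gaussian variables with deterministic coefficients, so all moments are finite. The task is therefore to bound them uniformly in $\delta$, despite the number of summands growing like $\delta^{-(k-1)}$.

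\textbf{Step 2 (uniform bounds \eqref{eq:mesh-bound-alpha-z}--\eqref{eq:mesh-bound-chi}).} Each difference label contributes a factor that is $O(\delta)$ in the appropriate sense:
\begin{itemize}
\item $\Gamma(v,u+\delta;t)-\Gamma(v,u;t)$ is $O(\delta)$ away from the diagonal, by the differentiability of $\Gamma$ in its state arguments (formula (4.2) of \citet{KURISAKI2026104997} and \eqref{eq:d-Gamma}). Its jump at the diagonal is bounded, and the diagonal carries only $O(\delta^{-1})$ fewer grid points.
\item $\mu_{u+\delta;t}-\mu_{u;t}=\int_u^{u+\delta}\partial_s\mu_{s;t}\,ds$ is $O(\delta)$ in every $L^p$, by \eqref{eq:ds-mu} and its analogue for $s<t$.
\item $\Delta\eta$ is either $O(\delta)$ deterministically or a Wiener increment.
\end{itemize}
The deterministic and mean-type labels are summed as Riemann sums, bounded by $C_T$ via Minkowski's inequality. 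For the stochastic labels $\int R^{-1}dY^\epsilon$, the sum is a discrete stochastic integral, but its integrand is non-adapted because $\mu_{\cdot;t}$ depends on the full path. We would handle this by expanding the product into Wiener chaos with deterministic kernels (equivalently, applying Wick/Isserlis to compute the second moment exactly). Each pairing of two increments then produces either $\delta\,\delta_{ij}$ (the It\^o isometry) or a product of kernel differences that are $O(\delta)$. Counting powers of $\delta$ against the number of free summation indices shows that each pairing contributes $O(1)$, which gives \eqref{eq:mesh-bound-alpha-z}--\eqref{eq:mesh-bound-chi}. Since $A$ and $\chi$ are again admissible with the same label structure, the same argument applies to them.

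\textbf{Step 3 (continuity bounds \eqref{eq:mesh-continuity-alpha-z}, \eqref{eq:mesh-continuity-chi}).} We write each increment as a telescoping difference that changes one argument at a time.
\begin{itemize}
\item \emph{Changing $u_1$.} The deterministic factors are $C^1$ in $u_1$, and $\mu_{u_1;t}$ is differentiable in $u_1$ with $L^2$-bounded derivative by \eqref{eq:ds-mu}. Applying the Step~2 bound to the differentiated expression, which is admissible of the same type, yields a bound of order $(u_1'-u_1)$. The same reasoning gives \eqref{eq:mesh-continuity-chi}, since $\chi$ involves no terminal-time increment there.
\item \emph{Changing $t_1$ or $t_2$.} We integrate the terminal differential \eqref{eq:d-alpha}, or the analogous one for $z$ coming from Proposition~\ref{prop:triangular-terminal-differentiation}, over $[t_1,t_1']$. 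The $dt$ part contributes $C(t_1'-t_1)$ by \eqref{eq:mesh-bound-A-z} and \eqref{eq:mesh-bound-A}. The $dY$ part is an adapted-in-$t$ It\^o integral whose integrand is bounded in $L^2$ by \eqref{eq:mesh-bound-chi-z} and \eqref{eq:mesh-bound-chi}, so the It\^o isometry gives the $(t_1'-t_1)^{1/2}$ term.
\end{itemize}
For products such as $\alpha\otimes z$ we use the product rule together with Cauchy--Schwarz. This requires fourth-moment versions of the Step~2 bounds, which follow from the same chaos argument through Gaussian hypercontractivity.

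\textbf{Main obstacle.} The main difficulty is the combinatorial counting in Step~2 for the $dY^\epsilon$-labelled, non-adapted sums. We must show that after Wick pairing, every term retains enough factors of $\delta$ to compensate for the $\delta^{-(k-1)}$ grid points. We would organise this by induction on $k$, peeling off the outermost label as in the proof of Proposition~\ref{prop:convergence-Ldelta}, and we would treat the diagonal strips separately.
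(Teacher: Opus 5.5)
Your proposal is correct in outline and follows the paper's proof. The matching steps are Gaussianity of every factor under $Q^\epsilon$ via Girsanov and \eqref{eq:mu-expression}, a Wick second-moment count in which each difference label contributes a factor $\delta$, and the continuity bounds obtained by integrating the $u_1$-, $t_1$- and $t_2$-derivatives against \eqref{eq:mesh-bound-alpha-z}--\eqref{eq:mesh-bound-chi}.

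For the count you flag as the main obstacle, the paper uses the separated form of $\Gamma$ (Lemma~\ref{lem:separated-smoothing-covariance}) to rewrite every sum in the cell normal form \eqref{eq:Gaussian-normal-form} with uniformly bounded coefficients. Then only same-cell pairings survive, and each surviving pair-sum is bounded by Cauchy--Schwarz. No BDG-type peeling as in Proposition~\ref{prop:convergence-Ldelta} is used, and it would not work here because it needs adaptedness.

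One point to tighten, which the paper also passes over quickly, concerns the $t_1$-increment. When $t_2'>t_1$, as happens in the proof of Theorem~\ref{theorem:convergence-of-sum}, the integrand $\sum_{\bm i}\chi_{\cdot;v}\otimes z_{\cdot;t_2'}$ is not adapted. So the $(t_1'-t_1)^{1/2}$ rate has to come from the same Wick count applied to the increment, not from the It\^o isometry.
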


In order to prove Lemma \ref{lem:uniform-Gaussian-mesh-estimates}, we need the following lemma.
\begin{lemma}
  \label{lem:separated-smoothing-covariance}
  Fix \(T>0\).  On each of the two simplices
  \[
    0\leq s\leq u\leq t\leq T,
    \qquad
    0\leq u\leq s\leq t\leq T,
  \]
  the following assertions hold.
  \begin{enumerate}[label=(\arabic*)]
    \item Every component of \(\Gamma(s,u;t)\) is a finite sum of terms of
      the form
      \begin{align}
        f_0(t)f_1(s)f_2(u),                  \label{eq:separated-Gamma}
      \end{align}
      where \(f_0,f_1,f_2\) are deterministic locally bounded functions.

    \item For \(v=s,u\), write
      \begin{align}
        d_v\Gamma(s,u;t)=F_v(s,u;t)\,dv.     \label{eq:separated-Gamma-derivative}
      \end{align}
      Then every component of \(F_v(s,u;t)\) is a finite sum of terms of the
      form
      \begin{align*}
        f_0(t)f_1(s)g_2(u)
        \qquad\text{or}\qquad
        f_0(t)g_1(s)f_2(u),
      \end{align*}
      where the \(f_i\) are deterministic locally bounded functions and
      each \(g_i\) is deterministic and locally integrable.  At \(s=u\),
      the derivatives are understood one-sidedly on the two simplices.
  \end{enumerate}
\end{lemma}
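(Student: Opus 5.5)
Once we have an explicit representation of the smoothing covariance in terms of the forward filter quantities, the claim reduces to bookkeeping. The route is the backward (Rauch--Tung--Striebel type) representation underlying Theorem~\ref{theorem:linear-smoothing} and \citet{KURISAKI2026104997}. Since $\bar a,\bar b,\bar c$ and $R$ are deterministic, the filtering covariance $\Gamma(t)$ solves a deterministic Riccati equation and is locally bounded. Let $\Psi(u,s)$ be the fundamental solution of
\begin{align*}
  \partial_s\Psi(u,s)=\{\bar a(s)-\Gamma(s)\bar c(s)^\top R(s)^{-1}\bar c(s)\}\Psi(u,s),\qquad \Psi(u,u)=I.
\end{align*}
Then \eqref{eq:d-gamma-tut}, integrated from $t=s$ with initial value $\Gamma(s,s;s)=\Gamma(s)$, gives $\Gamma(t,s;t)=\Psi(s,t)\Gamma(s)$.

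For $u\leq s\leq t$, integrate \eqref{eq:d-gamma-sut} in the terminal variable from $t'=s$ to $t'=t$. The starting value $\Gamma(s,u;s)$ is the filter-time cross covariance, which by \eqref{eq:d-gamma-tut} (now in $s$) equals $\Psi(u,s)\Gamma(u)$. This gives
\begin{align*}
  \Gamma(s,u;t)=\Psi(u,s)\Gamma(u)-\Gamma(s)^\top\Big[\int_s^t\Psi(s,r)^\top \bar c(r)^\top R(r)^{-1}\bar c(r)\Psi(s,r)\,dr\Big]\Psi(u,s)\Gamma(u).
\end{align*}
(Here I use $\Gamma(r,s;r)=\Psi(s,r)\Gamma(s)$ and $\Gamma(r,u;r)=\Psi(u,r)\Gamma(u)=\Psi(s,r)\Psi(u,s)\Gamma(u)$.)

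To separate variables, write $\Psi(u,s)=\Psi(0,s)\Psi(0,u)^{-1}$ and $\Psi(s,r)=\Psi(0,r)\Psi(0,s)^{-1}$. Also split the integral as
\begin{align*}
  \int_s^t=\int_0^t-\int_0^s
\end{align*}
and set
\begin{align*}
  \mathcal{O}(t)=\int_0^t\Psi(0,r)^\top\bar c(r)^\top R(r)^{-1}\bar c(r)\Psi(0,r)\,dr.
\end{align*}
Every matrix entry then becomes a finite sum of products of entries of functions of $t$ alone ($\mathcal O(t)$ or $1$), of $s$ alone ($\Gamma(s)$, $\Psi(0,s)^{\pm1}$, $\mathcal O(s)$), and of $u$ alone ($\Psi(0,u)^{-1}\Gamma(u)$). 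Expanding the matrix products componentwise gives (1). Local boundedness follows because $\Psi(0,\cdot)$ and its inverse solve linear ODEs with locally integrable coefficients, and the $\epsilon$-dependence is only through a bounded $\bar c$. The case $s\leq u$ follows from the symmetry $\Gamma(s,u;t)=\Gamma(u,s;t)^\top$, which swaps the roles of $f_1$ and $f_2$.

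For (2), differentiate the separated form term by term. Each factor $\Psi(0,\cdot)^{\pm1}$, $\Gamma(\cdot)$, $\mathcal O(\cdot)$ is absolutely continuous. Their derivatives are given by the linear ODE, the Riccati equation and the integrand of $\mathcal O$, respectively, and all of these are locally integrable (they involve the continuous $\bar a$, $\bar c$ and $\bar b\bar b^\top$). Differentiating $f_0(t)f_1(s)f_2(u)$ in $u$ gives $f_0(t)f_1(s)f_2'(u)$, and differentiating in $s$ gives $f_0(t)f_1'(s)f_2(u)$, which is exactly the stated form with $g_i=f_i'$.

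At $s=u$ the two simplices produce different representations, and indeed $\partial_u\Gamma$ has a jump of size $\bar b\bar b^\top$ there, as in \eqref{eq:d-Gamma}. Hence the derivatives are taken one-sidedly, using the formula valid on each closed simplex.

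The main obstacle is the first step: rigorously justifying the closed-form expression for $\Gamma(s,u;t)$. This requires checking that the filter-time cross covariance $\Gamma(s,u;s)$ for $u\le s$ satisfies \eqref{eq:d-gamma-tut} in $s$, which is the statement \eqref{eq:d-gamma-tut} applied at intermediate times. It also requires checking that integrating \eqref{eq:d-gamma-sut} is legitimate, since the right-hand side is continuous in the terminal time. I would take both facts from Theorem~\ref{theorem:linear-smoothing} and \citet{KURISAKI2026104997}. Once the explicit formula is in hand, the rest is bookkeeping.
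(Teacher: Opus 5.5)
Your proposal is correct and essentially matches the paper's proof. Your $\Psi(u,s)$ is the paper's $U(s)U(u)^{-1}$, and your closed form for $\Gamma(s,u;t)$ on $u\le s\le t$ coincides with \eqref{eq:explicit-separated-Gamma} once the symmetry of $\Gamma(s)$ is used; the splitting $\int_s^t=\int_0^t-\int_0^s$, the transpose argument for the other simplex, and the term-by-term differentiation for (2) are the paper's steps. You also correctly point out that integrating \eqref{eq:d-gamma-sut} down to terminal time $t'=s$ needs a continuity argument at the endpoint, which the paper passes over silently.
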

\begin{proof}
  Put
  \[
    B(t)=\bar a(t)
      -\Gamma(t)\bar c(t)^\top R(t)^{-1}\bar c(t),
  \]
  and let \(U\) be the fundamental matrix satisfying
  \[
    \frac{dU(t)}{dt}=B(t)U(t),\qquad U(0)=I.
  \]
  Since \(U(t)\) is invertible, \eqref{eq:d-gamma-tut} and uniqueness for the
  corresponding linear equation give
  \begin{align}
    \Gamma(t,s;t)=U(t)U(s)^{-1}\Gamma(s),
    \qquad 0\leq s\leq t.                    \label{eq:Gamma-fundamental-form}
  \end{align}

  Suppose first that \(0\leq u\leq s\leq t\).  Integrating
  \eqref{eq:d-gamma-sut} from \(s\) to \(t\), and then using
  \eqref{eq:Gamma-fundamental-form}, yields
  \begin{align}
    \begin{split}
      \Gamma(s,u;t)
      ={}&U(s)U(u)^{-1}\Gamma(u)\\
      &-\Gamma(s)U(s)^{-\top}
      \left\{\int_s^t
        U(r)^\top\bar c(r)^\top R(r)^{-1}\bar c(r)U(r)\,dr
      \right\}
      U(u)^{-1}\Gamma(u).
    \end{split}                              \label{eq:explicit-separated-Gamma}
  \end{align}
  For \(0\leq s<u\leq t\), the corresponding expression is obtained from
  \[
    \Gamma(s,u;t)=\Gamma(u,s;t)^\top.
  \]
  Writing
  \begin{align*}
    \int_s^t
      U(r)^\top\bar c(r)^\top R(r)^{-1}\bar c(r)U(r)\,dr
    ={}&
    \int_0^t
      U(r)^\top\bar c(r)^\top R(r)^{-1}\bar c(r)U(r)\,dr\\
    &-
    \int_0^s
      U(r)^\top\bar c(r)^\top R(r)^{-1}\bar c(r)U(r)\,dr
  \end{align*}
  in \eqref{eq:explicit-separated-Gamma}, we see that each matrix component
  is a finite sum of products of a function of \(t\), a function of \(s\),
  and a function of \(u\).  All these functions are locally bounded.  This
  proves (1).

  We next prove (2).  Differentiating
  \eqref{eq:explicit-separated-Gamma} with respect to \(s\), for
  \(u<s<t\), gives
  \begin{align}
    \begin{split}
      F_s(s,u;t)
      ={}&B(s)U(s)U(u)^{-1}\Gamma(u)\\
      &-\frac{d\Gamma(s)}{ds}U(s)^{-\top}
      \left\{\int_s^t
        U(r)^\top\bar c(r)^\top R(r)^{-1}\bar c(r)U(r)\,dr
      \right\}
      U(u)^{-1}\Gamma(u)\\
      &+\Gamma(s)B(s)^\top U(s)^{-\top}
      \left\{\int_s^t
        U(r)^\top\bar c(r)^\top R(r)^{-1}\bar c(r)U(r)\,dr
      \right\}
      U(u)^{-1}\Gamma(u)\\
      &+\Gamma(s)\bar c(s)^\top R(s)^{-1}\bar c(s)
        U(s)U(u)^{-1}\Gamma(u).
    \end{split}                              \label{eq:explicit-separated-Gamma-derivative}
  \end{align}
  Splitting the integrals at zero as above shows directly that every
  component of \(F_s\) has one of the two separated forms in (2).  Indeed,
  the factors containing \(B\), \(d\Gamma/ds\), or
  \(\bar c^\top R^{-1}\bar c\) are locally integrable, while all the
  remaining one-variable factors are locally bounded.  The case \(s<u\)
  follows from the symmetry of \(\Gamma\), and the assertion for \(F_u\)
  follows by interchanging \(s\) and \(u\).  At \(s=u\), the two expressions
  give the corresponding one-sided derivatives.  This completes the proof.
\end{proof}

\begin{proof}[Proof of Lemma \ref{lem:uniform-Gaussian-mesh-estimates}]
  Throughout the proof, \(C_T\) denotes a generic constant which is
  independent of the mesh size and of all the time variables in the
  statement.  Also, write \(J_\ell^t=[s_\ell,s_{\ell+1}]\cap[0,t]\) for $\ell=0,1,\cdots$.

  For the first assertion, we prove \eqref{eq:mesh-bound-alpha}.  The other
  estimates follow from the same argument because multiplication by \(z\),
  or replacement of \(\alpha\) by \(A\) or \(\chi\), produces only finitely
  many additional factors of the types appearing below.
  Due to Lemma \ref{lem:separated-smoothing-covariance}, we can conclude 
  \begin{align*}
    |\Gamma(u,v;t)|\leq C_T
  \end{align*}
  and
  \begin{align}
    &|{\rm Cov}_{\tilde{E}_t}(\bar{X}_u,\Delta_v\bar{X})|\leq C_T\delta,\\
    &|{\rm Cov}_{\tilde{E}_t}(\Delta_u\bar{X},\Delta_v\bar{X})|
      \leq C_T\delta^2
      \quad\text{if }[u,u+\delta]\cap[v,v+\delta]
      \text{ has zero Lebesgue measure},\label{eq:off-diagonal-increment-covariance}\\
    &|{\rm Cov}_{\tilde{E}_t}(\Delta_u\bar{X},\Delta_u\bar{X})|
      \leq C_T\delta. \label{eq:diagonal-increment-covariance}
  \end{align}
  The second estimate is the one used for distinct difference labels in
  \(D_\delta^{k-1}(r)\); the last estimate records separately the mesh
  diagonal. Making use of these facts and writing
  \begin{align*}
    \mu_{s;t}
      &=E[\bar X_s]+\int_0^t\Gamma(s,r;t)\bar c(r)^\top R(r)^{-1}
        \{dY_r^\epsilon-\bar c(r)E[\bar X_r]\,dr\}\\
      &=E[\bar X_s]+\sum_{\ell=0}^\infty \int_{J_\ell^t}\Gamma(s,r;t)\bar c(r)^\top R(r)^{-1}
        \{dY_r^\epsilon-\bar c(r)E[\bar X_r]\,dr\},
  \end{align*}
  and then applying Lemma \ref{lem:separated-smoothing-covariance}, we can express each element of \eqref{eq:mesh-bound-alpha} as a finite sum of the form
  \begin{align}
    \sum_{\ell_1,\ldots,\ell_q=0}^\infty
      \lambda_{\ell_1,\ldots,\ell_q}(u_1;t_1)
        \Delta_{\ell_1}Y^\epsilon(h_1)\cdots\Delta_{\ell_q}Y^\epsilon(h_q),                                           \label{eq:Gaussian-normal-form}
  \end{align}
  where
  \begin{itemize}
    \item $h_1,\cdots,h_q$ are deterministic
      $\mathbb{R}^{d_2}$-valued functions such that
    \begin{align*}
      \int_0^T h_i(r)^\top R(r)h_i(r)\,dr<\infty
      \qquad(i=1,\cdots,q).
    \end{align*}
    \item $\displaystyle \Delta_\ell Y^\epsilon(h_i)
      :=\int_{J_\ell^{t_1}}h_i(v)^\top dY_v^\epsilon$.
    \item $\lambda_{\ell_1,\ldots,\ell_q}(u_1;t_1)$ are deterministic functions such that $|\lambda_{\ell_1,\ldots,\ell_q}(u_1;t_1)|\leq C_T$ if $u_1,t_1\leq T$.
  \end{itemize}
  Note that \eqref{eq:Gaussian-normal-form} is actually a finite sum because
  \(J_\ell^{t_1}=\emptyset\) for sufficiently large \(\ell\).

  Under \(Q^\epsilon\), the process \(Y^\epsilon\) is Gaussian with
  independent increments and quadratic variation
  \(d\langle Y^\epsilon\rangle_v=R(v)\,dv\).  Hence
  \begin{align}
    &E_{Q^\epsilon}
      [\Delta_\ell Y^\epsilon(\vartheta)
       \Delta_mY^\epsilon(\widetilde\vartheta)]=0
       \quad(\ell\neq m),\notag\\
    &E_{Q^\epsilon}
      [\Delta_\ell Y^\epsilon(\vartheta)
       \Delta_\ell Y^\epsilon(\widetilde\vartheta)]
      =
      \int_{J_\ell^{t_1}}
        \vartheta(v)^\top R(v)\widetilde\vartheta(v)\,dv,\notag\\
    &\sum_{\ell=0}^\infty
      \left|
      E_{Q^\epsilon}
      [\Delta_\ell Y^\epsilon(\vartheta)
       \Delta_\ell Y^\epsilon(\widetilde\vartheta)]
      \right|
      \leq
      \left(\int_0^T\vartheta(v)^\top R(v)\vartheta(v)\,dv\right)^{1/2}
      \left(\int_0^T\widetilde\vartheta(v)^\top
        R(v)\widetilde\vartheta(v)\,dv\right)^{1/2}.
                                                        \label{eq:cell-Gaussian-bound}
  \end{align}
  Expanding the square of \eqref{eq:Gaussian-normal-form} and applying
  Wick's formula, each expectation is a sum over the finite set
  \(\mathfrak P_{2q}\) of pairings of its \(2q\) Gaussian increments.
  For a pair to contribute, its two increments must have the same cell
  index.  Consequently, the uniform bound on \(\lambda\) and
  \eqref{eq:cell-Gaussian-bound} give
  \begin{align*}
    &E_{Q^\epsilon}\left[
      \left|
      \sum_{\ell_1,\ldots,\ell_q=0}^\infty
      \lambda_{\ell_1,\ldots,\ell_q}(u_1;t_1)
        \Delta_{\ell_1}Y^\epsilon(h_1)\cdots\Delta_{\ell_q}Y^\epsilon(h_q)
      \right|^2
    \right]\\
    &\quad\leq C_T
      \sum_{\pi\in\mathfrak P_{2q}}
      \prod_{\{a,b\}\in\pi}
      \left\{
        \sum_{\ell=0}^\infty
        \left|
          E_{Q^\epsilon}\!\left[
            \Delta_\ell Y^\epsilon(h_a)
            \Delta_\ell Y^\epsilon(h_b)
          \right]
        \right|
      \right\}
      \leq C_T,
  \end{align*}
  where \(h_{q+a}=h_a\) for \(a=1,\ldots,q\).
  Thus we obtain \eqref{eq:mesh-bound-alpha}.

  It remains to prove the continuity estimates.  For
  \eqref{eq:mesh-continuity-alpha-z}, split the difference inside the sum as
  \begin{align}
    \begin{split}
      &\alpha_{s_{i_1},\ldots,s_{i_{k-1}},u_1';t_1'}
        \otimes
        z_{s_{i_1},\ldots,s_{i_{k-1}},u_2;t_2'}
      -
      \alpha_{s_{i_1},\ldots,s_{i_{k-1}},u_1;t_1}
        \otimes
        z_{s_{i_1},\ldots,s_{i_{k-1}},u_2;t_2}\\
      ={}&
      \bigl\{
        \alpha_{s_{i_1},\ldots,s_{i_{k-1}},u_1';t_1'}
        -
        \alpha_{s_{i_1},\ldots,s_{i_{k-1}},u_1;t_1'}
      \bigr\}
        \otimes z_{s_{i_1},\ldots,s_{i_{k-1}},u_2;t_2'}\\
      &+
      \bigl\{
        \alpha_{s_{i_1},\ldots,s_{i_{k-1}},u_1;t_1'}
        -
        \alpha_{s_{i_1},\ldots,s_{i_{k-1}},u_1;t_1}
      \bigr\}
        \otimes z_{s_{i_1},\ldots,s_{i_{k-1}},u_2;t_2'}\\
      &+
      \alpha_{s_{i_1},\ldots,s_{i_{k-1}},u_1;t_1}
        \otimes
      \bigl\{
        z_{s_{i_1},\ldots,s_{i_{k-1}},u_2;t_2'}
        -
        z_{s_{i_1},\ldots,s_{i_{k-1}},u_2;t_2}
      \bigr\}.
    \end{split}                              \label{eq:mesh-continuity-decomposition}
  \end{align}
  Summing \eqref{eq:mesh-continuity-decomposition} over
  \(D_\delta^{k-1}(r)\), we obtain
  \begin{align}
    &\sum_{\bm i\in D_\delta^{k-1}(r)}
      \Bigl\{
      \alpha_{s_{i_1},\ldots,s_{i_{k-1}},u_1';t_1'}
        \otimes z_{s_{i_1},\ldots,s_{i_{k-1}},u_2;t_2'}
      -\alpha_{s_{i_1},\ldots,s_{i_{k-1}},u_1;t_1}
        \otimes z_{s_{i_1},\ldots,s_{i_{k-1}},u_2;t_2}
      \Bigr\}\notag\\
    ={}&\int_{u_1}^{u_1'}
      \sum_{\bm i\in D_\delta^{k-1}(r)}
      \partial_v\alpha_{s_{i_1},\ldots,s_{i_{k-1}},v;t_1'}
        \otimes z_{s_{i_1},\ldots,s_{i_{k-1}},u_2;t_2'}\,dv\notag\\
    &+\int_{t_1}^{t_1'}\sum_{\bm i\in D_\delta^{k-1}(r)}
        d_v\alpha_{s_{i_1},\ldots,s_{i_{k-1}},u_1;v}
        \otimes z_{s_{i_1},\ldots,s_{i_{k-1}},u_2;t_2'}\notag\\
    &+\int_{t_2}^{t_2'}\sum_{\bm i\in D_\delta^{k-1}(r)}
      \alpha_{s_{i_1},\ldots,s_{i_{k-1}},u_1;t_1}
      \otimes   d_vz_{s_{i_1},\ldots,s_{i_{k-1}},u_2;v}.
      \label{eq:mesh-continuity-integral-decomposition}
  \end{align}
  Here, each integrand on the right-hand side can be estimated by the same argument as in the first part. Therefore, we can show that
  the \(L^2(Q^\epsilon)\)-norms of the three terms are bounded, respectively,
  by
  \begin{align*}
    C_T(u_1'-u_1),\qquad
    C_T(t_1'-t_1)^{1/2},\qquad
    C_T(t_2'-t_2)^{1/2},
  \end{align*}
  which proves
  \eqref{eq:mesh-continuity-alpha-z}.  Applying the same argument to
  \[
    \int_{u_1}^{u_1'}
      \sum_{\bm i\in D_\delta^{k-1}(r)}
      \partial_v\chi^p_{s_{i_1},\ldots,s_{i_{k-1}},v;t_1}\,dv
  \]
  and summing over \(p=1,\ldots,d_2\) proves
  \eqref{eq:mesh-continuity-chi}.

  A deterministic contraction or permutation only multiplies the
  componentwise estimates by a locally bounded operator norm.  The final
  assertion follows.
\end{proof}

We now state the main theorem of this work.
\begin{theorem}\label{theorem:convergence-of-sum}
  For any $I_t^\delta \in \mathcal{J}_{t,\delta}^k$, there exists a
  $\{\mathcal{Y}_t^\epsilon\}$-adapted process
  $\{I_t\}_{t\geq 0}$ such that
  \begin{align*}
    \sup_{0\leq t\leq T}E_{Q^\epsilon}[|I_t^\delta-I_t|^2]\to 0~~(\delta \to 0)
  \end{align*}
  for every $T>0$.

  In particular, consider $I_t^\delta\in\mathcal{I}_{t,\delta}^k$
  generated by $F\in\mathcal{A}_{t,\delta}^k$ as above, and assume
  \eqref{eq:triangular-terminal-differentiation} and the displayed
  factorization of $F$ through $\alpha$ and $w$, with \(A\) and \(\chi\)
  defined by \eqref{eq:d-alpha}.  Denote the
  following lower-degree $L^2$-limits by
  \begin{align}
    &I_t^1
    :=\lim_{\delta\to0}
      \sum_{\bm j\in D_\delta^{k-1}(t)}
      \alpha_{s_{j_1},\ldots,s_{j_{k-1}},t;t}
      \otimes z_{s_{j_1},\ldots,s_{j_{k-1}},t;t},\label{eq:convergence-I1}\\
    &I_t^2
    :=\lim_{\delta\to0}
      \sum_{\bm j\in D_\delta^{k-1}(t)}
      \alpha_{s_{j_1},\ldots,s_{j_{k-1}},t;t},\label{eq:convergence-I2}\\
    &I_t^{3,r}
    :=\lim_{\delta\to0}
      \sum_{\bm j\in D_\delta^k(t)}
      G_r(s_{j_1},\ldots,s_{j_k};t)
      \quad (r=1,\ldots,M),\label{eq:convergence-I3}\\
    &I_t^{4,\ell}
    :=\lim_{\delta\to0}
      \sum_{\bm j\in D_\delta^k(t)}
      H_\ell(s_{j_1},\ldots,s_{j_k};t)
      \quad(\ell=1,\ldots,N),\label{eq:convergence-I4}\\
    &I_t^5
    :=\lim_{\delta\to0}
      \sum_{\bm j\in D_\delta^{k-1}(t)}
      \chi_{s_{j_1},\ldots,s_{j_{k-1}},t;t}.\label{eq:convergence-I5}
  \end{align}
  Then we have
  \begin{align}
    \begin{split}
      dI_t={}&P(t)I_t\,dt+I_t^1\,dt
      +\bigl(I_t^2\otimes g(t)\bigr)^{\top}dY_t^\epsilon\\
      &+\sum_{r=1}^{M}K_r(t)[I_t^{3,r}]\,dt
      +\sum_{\ell=1}^{N}
        \bigl(L_\ell(t)[I_t^{4,\ell}]\bigr)^{\top}
        dY_t^\epsilon
      +\langle I_t^5,g(t)\rangle_{R(t)}\,dt.
    \end{split}
    \label{eq:generalized-Ito-formula}
  \end{align}
  Here, for two $d_2$-tuples of tensors $U=(U^a)_{a=1}^{d_2}$ and
  $V=(V^b)_{b=1}^{d_2}$ of the appropriate types,
  \begin{align*}
    \langle U,V\rangle_{R(t)}
    :=\sum_{a,b=1}^{d_2}R_{ab}(t)\,U^a\otimes V^b,
  \end{align*}
  with the tensor contraction dictated by the type of $F$.
\end{theorem}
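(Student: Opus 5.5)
We argue by strong induction on the degree of the fixed admissible representation, proving the convergence statement and the differential identity \eqref{eq:generalized-Ito-formula} together. Since $\mathcal{J}_{t,\delta}^k$ consists of finite sums of deterministic, locally bounded contractions $S_i(t)$ applied to elements of $\mathcal{I}_{t,\delta}^k$, it suffices to treat a single $I_t^\delta=\sum_{\bm j\in D_\delta^k(t)}F(s_{j_1},\ldots,s_{j_k};t)$ with $F\in\mathcal{A}_{t,\delta}^k$. Uniformity in $t\le T$ is preserved because each $S_i$ is bounded on $[0,T]$.

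\emph{Base case.} If ${\rm deg}F=k$, no smoothing field occurs. Then $F$ consists only of deterministic factors, $\Phi_r$, $\mu_t$, $\Gamma(t,u_i;t)$ and increments $\Delta_{u_i}\eta$. By \eqref{eq:d-gamma-tut}, the factor $\Gamma(t,u;t)=U(t)U(u)^{-1}\Gamma(u)$ separates into a function of $t$ times a function of $u$, as does $\Phi_r(u_k,t)$. Hence $I_t^\delta$ is a deterministic-in-$t$ matrix times a Riemann--It\^o sum of an adapted iterated integral against $dt$, $R^{-1}dr$, or $R^{-1}dY^\epsilon$. Its $L^2(Q^\epsilon)$ convergence, uniform in $t$, then follows exactly as in Proposition~\ref{prop:convergence-Ldelta}, since $Y^\epsilon$ is a Gaussian martingale under $Q^\epsilon$. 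The limit is an ordinary adapted iterated integral, so \eqref{eq:generalized-Ito-formula} reduces to the classical It\^o product rule.

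\emph{Inductive step.} Fix $t<t'$ and compute the discrete increment $I_{t'}^\delta-I_t^\delta$. It splits into two parts:
\begin{enumerate}
\item[(a)] new grid indices with $s_{j_k}\in[t,t')$;
\item[(b)] the change of $F$ in the terminal variable over fixed indices.
\end{enumerate}

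For (b), we integrate \eqref{eq:triangular-terminal-differentiation} in $v\in[t,t']$. This produces $\int_t^{t'}P(v)I_v^\delta dv$, the terms $\int K_r[\sum G_r]dv$, and the stochastic integrals $\int (L_\ell[\sum H_\ell])^\top dY_v^\epsilon$. Each of the sums $\sum G_r$ and $\sum H_\ell$ has strictly lower degree, so the induction hypothesis gives their $L^2$ limits $I^{3,r}$ and $I^{4,\ell}$.

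For (a), we use the factorization $F=\alpha\otimes(w_{\cdot,u_k+\delta}-w_{\cdot,u_k})$. We expand $w$ in its state variable via $d_{u_k}w=z\,du_k+g\,dY^\epsilon_{u_k}$, and expand $\alpha_{\cdot,u_k;v}$ from terminal time $v$ back to $u_k$ via \eqref{eq:d-alpha}. The leading contributions are then the boundary sums $\sum\alpha_{\cdot,v;v}\otimes z_{\cdot,v;v}\,dv$ and $\sum\alpha_{\cdot,v;v}\otimes g(v)\,dY_v^\epsilon$. Each of these has lower degree by Lemma~\ref{lem:boundary-degree-reduction}, so its limit is $I^1$ or $I^2$ by induction.

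The cross-variation between the $\chi^\top dY$ part of $\alpha$ and the $g\,dY$ part of $w$ inside the same cell produces the correction $\langle I^5,g\rangle_R\,dt$. This is precisely the non-adapted correction term, analogous to $\sigma^2q_{t;t}$ in the one-dimensional formula.

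Summing these contributions shows that $I_t^\delta-I_0^\delta$ equals the right-hand side of \eqref{eq:generalized-Ito-formula}, with discrete sums in place of limits, plus a remainder $\mathcal{E}^\delta_t$. We then define $I_t$ as the solution of the linear equation \eqref{eq:generalized-Ito-formula}, whose inhomogeneous terms are already-defined adapted limits. Subtracting, the difference $I^\delta-I$ satisfies a linear integral equation with coefficient $P$. Gronwall's inequality then gives
\[
\sup_{t\le T}E_{Q^\epsilon}|I^\delta_t-I_t|^2\to0,
\]
provided that $\sup_{t\le T}E_{Q^\epsilon}|\mathcal{E}^\delta_t|^2\to0$ and that the lower-degree sums converge uniformly in $t$. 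The latter holds by induction, combined with It\^o's isometry for the $dY$ terms and $\int_0^T|K_r|^2+|L_\ell|^2<\infty$.

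\emph{Main obstacle.} The hard step is showing that the remainder $\mathcal{E}^\delta$ vanishes uniformly. It collects three kinds of discrepancy:
\begin{itemize}
\item the difference between $\alpha_{\cdot,s_{j_k};t}$ and $\alpha_{\cdot,t;t}$ within a mesh cell;
\item the difference between $z$ at the grid point and at the terminal point;
\item the quadratic terms $\int\!\!\int\chi\,dY\cdot g\,dY$ minus their compensator.
\end{itemize}
Here the integrands are non-adapted, so BDG cannot be applied directly. We will instead rely on Lemma~\ref{lem:uniform-Gaussian-mesh-estimates}. The bounds \eqref{eq:mesh-bound-alpha-z}--\eqref{eq:mesh-bound-chi} control each cell uniformly, and the H\"older-type continuity estimates \eqref{eq:mesh-continuity-alpha-z}--\eqref{eq:mesh-continuity-chi} show that each cell's discrepancy is $O(\delta^{1/2})$ in $L^2$ times the cell's increment.

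Summing over roughly $T/\delta$ cells is the delicate point. Terms with a $dt$ factor contribute $O(\delta^{1/2})$ in total. For terms with a $dY$ factor, we must exploit that cross-cell covariances vanish to leading order. We plan to do this through the Gaussian normal form \eqref{eq:Gaussian-normal-form} and the pairing argument with \eqref{eq:cell-Gaussian-bound}, which shows that only same-cell pairings survive and gives an $L^2$ bound of order $\delta^{1/2}$ overall. The diagonal increment covariance \eqref{eq:diagonal-increment-covariance}, of order $\delta$ rather than $\delta^2$, is handled separately: it is exactly what produces the $\langle I^5,g\rangle_R$ term rather than a remainder.
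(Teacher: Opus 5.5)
Your overall architecture matches the paper's. You induct on the degree, split $I_t^\delta$ into the contribution a cell makes when it enters the sum plus its subsequent terminal-time evolution via \eqref{eq:triangular-terminal-differentiation}, handle boundary sums by Lemma~\ref{lem:boundary-degree-reduction}, and obtain $\langle I^5,g\rangle_{R}$ from the in-cell bracket of the $\chi^\top dY^\epsilon$ part of $\alpha$ with the $g\,dY^\epsilon$ part of $w$. Using Gronwall instead of the explicit formula with $\Phi_P$ is harmless, since $P$ is deterministic and locally integrable.

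The gap is in the step you call the main obstacle. You assert that the remainder integrands are non-adapted, so that It\^o's isometry and BDG are unavailable. You then propose to get cross-cell decorrelation from the Gaussian normal form \eqref{eq:Gaussian-normal-form} and the pairing bound \eqref{eq:cell-Gaussian-bound}. That does not give what you need:
\begin{itemize}
\item Each cell's $dY^\epsilon$-discrepancy is $O(\delta)$ in $L^2(Q^\epsilon)$, so summing $T/\delta$ cells requires genuine orthogonality between cells.
\item The pairing argument in Lemma~\ref{lem:uniform-Gaussian-mesh-estimates} only controls the inner sums over $D_\delta^{k-1}$ (uniform bounds and H\"older continuity), not correlations across cells. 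The fact that only same-cell pairings survive gives no decay by itself.
\item For a genuinely non-adapted integrand, pairings of the increment of cell $c$ with cell-$c$ increments hidden inside the integrand attached to another cell $c'$ (and vice versa) do survive. They generically add up to an $O(1)$ contribution.
\end{itemize}
Relatedly, $\langle I^5,g\rangle_{R}$ does not come from \eqref{eq:diagonal-increment-covariance}, which concerns $\tilde E_t$-covariances of state increments occurring as factors of $F$. It comes from the $Q^\epsilon$-quadratic variation $R(u)\,du$ of $Y^\epsilon$ within one cell.

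The missing observation is that the cellwise terms are adapted, which is exactly what the paper uses. Expand $\alpha$ back to terminal time $s_{j_k}$, as you propose (the paper adds and subtracts $\alpha_{\bm s_{\bm j};s_{j_k}}$ in \eqref{eq:cellwise-correction-first-decomposition}). Then the frozen coefficient $\alpha_{\bm s_{\bm j};s_{j_k}}$ is $\mathcal Y^\epsilon_{s_{j_k}}$-measurable. Likewise $\alpha_{\bm s_{\bm j^-},u;u}$ and $\chi_{\bm s_{\bm j};u}$ are $\mathcal Y^\epsilon_u$-measurable, because every smoothing field with terminal time $u$ depends only on observations up to $u$. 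Hence every $dY^\epsilon$-term of the remainder is an ordinary It\^o integral over one cell with an adapted integrand.

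From there the argument closes as in the paper. The product of the two martingale increments in a cell is expanded by the classical It\^o product formula, as in \eqref{eq:product-of-cell-increments}. Contributions of distinct cells are martingale differences, hence orthogonal in $L^2(Q^\epsilon)$. It\^o's isometry together with \eqref{eq:mesh-bound-alpha-z}--\eqref{eq:mesh-continuity-chi} then makes all remainder terms vanish uniformly in $t\le T$, leaving only the bracket term. The non-adaptedness is confined to the original representation with terminal time $t$; the cell-entry/terminal-evolution decomposition you already use removes it.
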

\begin{proof}
   It is enough to consider the case of $I_t^\delta\in\mathcal{I}_{t,\delta}^k$. We proceed by induction on the degree.  The assertion is immediate for degree zero.  Assume that the assertion holds for every admissible representation whose degree is strictly smaller than ${\rm deg}I_t^\delta$. Then, Lemma~\ref{lem:boundary-degree-reduction}, \eqref{eq:triangular-terminal-differentiation} and the induction hypothesis yield \eqref{eq:convergence-I1}--\eqref{eq:convergence-I5} in the sense of locally uniform $L^2$-convergence.  
  
  Now we prove the desired convergence of $I_t^\delta$. Let $\Phi_P(u,t)$ be the fundamental solution associated with $P$, that is,
  \begin{align*}
    \partial_t\Phi_P(u,t)=P(t)\Phi_P(u,t),
    \qquad \Phi_P(u,u)=I.
  \end{align*}
  We also put
  \begin{align*}
    t_\delta^-:=\max\{s_j:s_j<t\},
  \end{align*}
  with the convention $t_\delta^-=0$ if the set on the right-hand side is
  empty.  For $\bm j=(j_1,\ldots,j_k)$, write
  $\bm s_{\bm j}=(s_{j_1},\ldots,s_{j_k})$,
  $\bm j^-=(j_1,\ldots,j_{k-1})$, and
  $\bm s_{\bm j^-}=(s_{j_1},\ldots,s_{j_{k-1}})$.

  By \eqref{eq:triangular-terminal-differentiation}, for each
  $\bm j\in D_\delta^k(t)$, we have, first for terminal times strictly larger
  than $s_{j_k+1}$ and then by continuity at $s_{j_k+1}$,
  \begin{align*}
    F(\bm s_{\bm j};t)
    ={}&\Phi_P(s_{j_k+1},t)
      \bigl[F(\bm s_{\bm j};s_{j_k+1})\bigr]\\
    &+\sum_{r=1}^M\int_{s_{j_k+1}}^t
      \Phi_P(u,t)K_r(u)
      \bigl[G_r(\bm s_{\bm j};u)\bigr]\,du\\
    &+\sum_{\ell=1}^N\int_{s_{j_k+1}}^t
      \Phi_P(u,t)
      \bigl(L_\ell(u)[H_\ell(\bm s_{\bm j};u)]\bigr)^\top
      dY_u^\epsilon.
  \end{align*}
  Moreover, the factorization of $F$ and the state-time differential of $w$
  give
  \begin{align*}
    F(\bm s_{\bm j};s_{j_k+1})
    ={}&\alpha_{\bm s_{\bm j};s_{j_k+1}}
      \otimes\int_{s_{j_k}}^{s_{j_k+1}}
      \bigl\{
        z_{\bm s_{\bm j^-},u;s_{j_k+1}}\,du
        +g(u)\,dY_u^\epsilon
      \bigr\}.
  \end{align*}
  Substituting this identity into the preceding variation-of-constants
  formula and then summing over $D_\delta^k(t)$, we obtain
  \begin{align}
      I_t^\delta={}&
      \sum_{\bm j\in D_\delta^k(t)}
      \Phi_P(s_{j_k+1},t)
      \left[
        \alpha_{\bm s_{\bm j};s_{j_k+1}}
        \otimes\int_{s_{j_k}}^{s_{j_k+1}}
        \bigl\{
          z_{\bm s_{\bm j^-},u;s_{j_k+1}}\,du
          +g(u)\,dY_u^\epsilon
        \bigr\}
      \right]\nonumber\\
      &+\sum_{r=1}^M\sum_{\bm j\in D_\delta^k(t)}
      \int_{s_{j_k+1}}^t\Phi_P(u,t)K_r(u)
        [G_r(\bm s_{\bm j};u)]\,du\nonumber\\
      &+\sum_{\ell=1}^N\sum_{\bm j\in D_\delta^k(t)}
      \int_{s_{j_k+1}}^t\Phi_P(u,t)
        \bigl(L_\ell(u)[H_\ell(\bm s_{\bm j};u)]\bigr)^\top
        dY_u^\epsilon\nonumber\\
    \begin{split}
      ={}&
      \int_0^{t_\delta^-}\Phi_P(u,t)
      \left[
        \sum_{\bm i\in D_\delta^{k-1}(u)}
        \alpha_{\bm s_{\bm i},u;u}\otimes
        \bigl\{z_{\bm s_{\bm i},u;u}\,du
               +g(u)\,dY_u^\epsilon\bigr\}
      \right]\\
      &+\mathcal R_t^\delta\\
      &+\sum_{r=1}^M\int_0^t\Phi_P(u,t)K_r(u)
      \left[
        \sum_{\bm j\in D_\delta^k(u)}
        G_r(\bm s_{\bm j};u)
      \right]\,du\\
      &+\sum_{\ell=1}^N\int_0^t\Phi_P(u,t)
      \left(
        L_\ell(u)\left[
          \sum_{\bm j\in D_\delta^k(u)}
          H_\ell(\bm s_{\bm j};u)
        \right]
      \right)^\top dY_u^\epsilon,
    \end{split}
    \label{eq:discrete-variation-of-constants}
  \end{align}
  where
  \begin{align}
    \begin{split}
      \mathcal R_t^\delta
      :=\sum_{\bm j\in D_\delta^k(t)}\Biggl\{&
      \Phi_P(s_{j_k+1},t)
      \left[
        \alpha_{\bm s_{\bm j};s_{j_k+1}}
        \otimes\int_{s_{j_k}}^{s_{j_k+1}}
        \bigl\{
          z_{\bm s_{\bm j^-},u;s_{j_k+1}}\,du
          +g(u)\,dY_u^\epsilon
        \bigr\}
      \right]\\
      &-\int_{s_{j_k}}^{s_{j_k+1}}
      \Phi_P(u,t)\left[
        \alpha_{\bm s_{\bm j^-},u;u}
        \otimes\bigl\{
          z_{\bm s_{\bm j^-},u;u}\,du
          +g(u)\,dY_u^\epsilon
        \bigr\}
      \right]\Biggr\}.
    \end{split}
    \label{eq:cellwise-correction}
  \end{align}
  Here, the convergence of
  \eqref{eq:discrete-variation-of-constants} directly follows from the locally uniform convergence \eqref{eq:convergence-I1}--\eqref{eq:convergence-I4} (and the local square integrability of $g$ and the local boundedness of the other elements) except for the second term.  

   Consequently, using the induction hypothesis, the It\^o isometry, and
  localization when necessary, the first, third, and fourth terms of
  \eqref{eq:discrete-variation-of-constants} converge in $L^2$, uniformly with
  respect to $0\leq t\leq T$, to
  \begin{align}
    \begin{split}
      \int_0^t\Phi_P(u,t)\Biggl[&
        I_u^1\,du+\bigl(I_u^2\otimes g(u)\bigr)^\top dY_u^\epsilon\\
        &+\sum_{r=1}^M K_r(u)[I_u^{3,r}]\,du
        +\sum_{\ell=1}^N
          \bigl(L_\ell(u)[I_u^{4,\ell}]\bigr)^\top dY_u^\epsilon
      \Biggr].
    \end{split}
    \label{eq:limit-except-cellwise-correction}
  \end{align}
  Thus, it remains only to show that
  \begin{align}
    \mathcal R_t^\delta\longrightarrow
      \int_0^t\Phi_P(u,t)
        \langle I_u^5,g(u)\rangle_{R(u)}\,du
    \label{eq:remaining-cellwise-correction}
  \end{align}
  in $L^2$, uniformly with respect to $0\leq t\leq T$.

  To this end, let us decompose \(\mathcal R_t^\delta\).  Adding and subtracting the
  expression with \(\alpha_{\bm s_{\bm j};s_{j_k}}\) in each summand of
  \eqref{eq:cellwise-correction}, we obtain
  \begin{align}
    \begin{split}
      \mathcal R_t^\delta
      ={}&\sum_{\bm j\in D_\delta^k(t)}
      \Phi_P(s_{j_k+1},t)\Biggl[
        \{\alpha_{\bm s_{\bm j};s_{j_k+1}}
          -\alpha_{\bm s_{\bm j};s_{j_k}}\}
        \otimes
        \int_{s_{j_k}}^{s_{j_k+1}}
          \{z_{\bm s_{\bm j^-},u;s_{j_k+1}}\,du
            +g(u)\,dY_u^\epsilon\}
      \Biggr]\\
      &+\sum_{\bm j\in D_\delta^k(t)}
      \int_{s_{j_k}}^{s_{j_k+1}}\Biggl\{
        \Phi_P(s_{j_k+1},t)\left[
          \alpha_{\bm s_{\bm j};s_{j_k}}\otimes
          \{z_{\bm s_{\bm j^-},u;s_{j_k+1}}\,du
            +g(u)\,dY_u^\epsilon\}
        \right]\\
      &\hspace{45mm}
        -\Phi_P(u,t)\left[
          \alpha_{\bm s_{\bm j^-},u;u}\otimes
          \{z_{\bm s_{\bm j^-},u;u}\,du+g(u)\,dY_u^\epsilon\}
        \right]\Biggr\}.
    \end{split}
    \label{eq:cellwise-correction-first-decomposition}
  \end{align}

  Write
  \(\chi_{\bm s_{\bm j};u}
  =(\chi_{\bm s_{\bm j};u}^p)_{p=1}^{d_2}\), $Y^\epsilon=(Y^{\epsilon,p})_{p=1}^{d_2}$ and
  \(g(u)=(g^q(u))_{q=1}^{d_2}\).  Since
  \begin{align*}
    \alpha_{\bm s_{\bm j};s_{j_k+1}}
      -\alpha_{\bm s_{\bm j};s_{j_k}}
    ={}&\int_{s_{j_k}}^{s_{j_k+1}}
      A_{\bm s_{\bm j};u}\,du+\sum_{p=1}^{d_2}\int_{s_{j_k}}^{s_{j_k+1}}
      \chi_{\bm s_{\bm j};u}^p\,dY_u^{\epsilon,p},
  \end{align*}
  It\^o's product formula, applied to the two martingale increments, gives
  \begin{align}
    \begin{split}
      &\{\alpha_{\bm s_{\bm j};s_{j_k+1}}
          -\alpha_{\bm s_{\bm j};s_{j_k}}\}
      \otimes
      \int_{s_{j_k}}^{s_{j_k+1}}
        \{z_{\bm s_{\bm j^-},v;s_{j_k+1}}\,dv
          +g(v)\,dY_v^\epsilon\}\\
      ={}&
      \left\{\int_{s_{j_k}}^{s_{j_k+1}}
        A_{\bm s_{\bm j};u}\,du\right\}
      \otimes
      \int_{s_{j_k}}^{s_{j_k+1}}
        \{z_{\bm s_{\bm j^-},v;s_{j_k+1}}\,dv
          +g(v)\,dY_v^\epsilon\}\\
      &+\left\{\sum_{p=1}^{d_2}
        \int_{s_{j_k}}^{s_{j_k+1}}
          \chi_{\bm s_{\bm j};u}^p\,dY_u^{\epsilon,p}\right\}
      \otimes
      \int_{s_{j_k}}^{s_{j_k+1}}
        z_{\bm s_{\bm j^-},v;s_{j_k+1}}\,dv\\
      &+\sum_{p,q=1}^{d_2}
      \int_{u=s_{j_k}}^{s_{j_k+1}}
      \int_{v=s_{j_k}}^{u}
        \chi_{\bm s_{\bm j};v}^p\otimes g^q(u)\,
        dY_v^{\epsilon,p}\,dY_u^{\epsilon,q}\\
      &+\sum_{p,q=1}^{d_2}
      \int_{u=s_{j_k}}^{s_{j_k+1}}
      \int_{v=s_{j_k}}^{u}
        \chi_{\bm s_{\bm j};u}^p\otimes g^q(v)\,
        dY_v^{\epsilon,q}\,dY_u^{\epsilon,p}\\
      &+\int_{s_{j_k}}^{s_{j_k+1}}
        \left\langle
          \chi_{\bm s_{\bm j^-},u;u},g(u)
        \right\rangle_{R(u)}\,du\\
      &+\int_{s_{j_k}}^{s_{j_k+1}}
        \left\langle
          \chi_{\bm s_{\bm j};u}
            -\chi_{\bm s_{\bm j^-},u;u},g(u)
        \right\rangle_{R(u)}\,du.
    \end{split}
    \label{eq:product-of-cell-increments}
  \end{align}
  Substituting \eqref{eq:product-of-cell-increments} into
  \eqref{eq:cellwise-correction-first-decomposition} yields
  \begin{align}
    \begin{split}
      \mathcal R_t^\delta
      ={}&\sum_{\bm j\in D_\delta^k(t)}
      \Phi_P(s_{j_k+1},t)\Biggl[
        \left\{\int_{s_{j_k}}^{s_{j_k+1}}
          A_{\bm s_{\bm j};u}\,du\right\}
        \otimes
        \int_{s_{j_k}}^{s_{j_k+1}}
          \{z_{\bm s_{\bm j^-},v;s_{j_k+1}}\,dv
            +g(v)\,dY_v^\epsilon\}\\
      &\hspace{28mm}
        +\left\{\sum_{p=1}^{d_2}
          \int_{s_{j_k}}^{s_{j_k+1}}
            \chi_{\bm s_{\bm j};u}^p\,dY_u^{\epsilon,p}\right\}
        \otimes
        \int_{s_{j_k}}^{s_{j_k+1}}
          z_{\bm s_{\bm j^-},v;s_{j_k+1}}\,dv\\
      &\hspace{28mm}
        +\sum_{p,q=1}^{d_2}
        \int_{u=s_{j_k}}^{s_{j_k+1}}
        \int_{v=s_{j_k}}^{u}
          \chi_{\bm s_{\bm j};v}^p\otimes g^q(u)\,
          dY_v^{\epsilon,p}\,dY_u^{\epsilon,q}\\
      &\hspace{28mm}
        +\sum_{p,q=1}^{d_2}
        \int_{u=s_{j_k}}^{s_{j_k+1}}
        \int_{v=s_{j_k}}^{u}
          \chi_{\bm s_{\bm j};u}^p\otimes g^q(v)\,
          dY_v^{\epsilon,q}\,dY_u^{\epsilon,p}\\
      &\hspace{28mm}
        +\int_{s_{j_k}}^{s_{j_k+1}}
          \left\langle
            \chi_{\bm s_{\bm j^-},u;u},g(u)
          \right\rangle_{R(u)}\,du\\
      &\hspace{28mm}
        +\int_{s_{j_k}}^{s_{j_k+1}}
          \left\langle
            \chi_{\bm s_{\bm j};u}
              -\chi_{\bm s_{\bm j^-},u;u},g(u)
          \right\rangle_{R(u)}\,du
      \Biggr]\\
      &+\sum_{\bm j\in D_\delta^k(t)}
      \int_{s_{j_k}}^{s_{j_k+1}}\Biggl\{
        \Phi_P(s_{j_k+1},t)\left[
          \alpha_{\bm s_{\bm j};s_{j_k}}\otimes
          \{z_{\bm s_{\bm j^-},u;s_{j_k+1}}\,du
            +g(u)\,dY_u^\epsilon\}
        \right]\\
      &\hspace{45mm}
        -\Phi_P(u,t)\left[
          \alpha_{\bm s_{\bm j^-},u;u}\otimes
          \{z_{\bm s_{\bm j^-},u;u}\,du+g(u)\,dY_u^\epsilon\}
        \right]\Biggr\}.
    \end{split}
    \label{eq:expanded-cellwise-correction}
  \end{align}

  Lemma~\ref{lem:uniform-Gaussian-mesh-estimates}, together with the
  It\^o isometry, now applies to every term in
  \eqref{eq:expanded-cellwise-correction}.  More precisely,
  \eqref{eq:mesh-bound-alpha-z}--\eqref{eq:mesh-bound-chi} show that the
  terms containing a finite-variation integral and the two iterated
  stochastic integrals converge to zero in \(L^2\), uniformly for
  \(0\leq t\leq T\).  The last integral in the first sum converges to zero
  by \eqref{eq:mesh-continuity-chi}, whereas
  \eqref{eq:mesh-continuity-alpha-z} shows that the final sum in
  \eqref{eq:expanded-cellwise-correction} also converges to zero.  Here we
  also use the local square integrability of \(g\) and the uniform
  continuity on compact time intervals of the fundamental solution
  \(\Phi_P\).  Consequently, the only non-vanishing term is
  \begin{align}
    &\sum_{\bm j\in D_\delta^k(t)}
      \Phi_P(s_{j_k+1},t)\left[
        \int_{s_{j_k}}^{s_{j_k+1}}
        \left\langle
          \chi_{\bm s_{\bm j^-},u;u},g(u)
        \right\rangle_{R(u)}\,du
      \right]\notag\\
    &\quad=
      \int_0^{t_\delta^-}
      \Phi_P(s_{\lfloor u/\delta\rfloor+1},t)
      \left[
        \left\langle
          \sum_{\bm i\in D_\delta^{k-1}(u)}
          \chi_{\bm s_{\bm i},u;u},g(u)
        \right\rangle_{R(u)}
      \right]\,du,
      \label{eq:surviving-cellwise-covariation}
  \end{align}
  where the equality holds up to values at the grid points, which do not
  affect the integral.  By \eqref{eq:convergence-I5},
  \eqref{eq:mesh-bound-chi}, and the absolute continuity of \(\Phi_P\),
  dominated convergence and the Cauchy--Schwarz inequality show that
  \eqref{eq:surviving-cellwise-covariation} converges in \(L^2\), uniformly
  for \(0\leq t\leq T\), to
  \begin{align*}
    \int_0^t\Phi_P(u,t)
      \bigl[\langle I_u^5,g(u)\rangle_{R(u)}\bigr]\,du.
  \end{align*}
  Thus \eqref{eq:remaining-cellwise-correction} holds.

  Combining this with
  \eqref{eq:limit-except-cellwise-correction}, we conclude that
  \(I_t^\delta\) converges in \(L^2\), uniformly for \(0\leq t\leq T\), to
  \begin{align*}
    I_t=\int_0^t\Phi_P(u,t)\Biggl[&
      I_u^1\,du+\bigl(I_u^2\otimes g(u)\bigr)^\top dY_u^\epsilon
      +\sum_{r=1}^M K_r(u)[I_u^{3,r}]\,du\\
      &+\sum_{\ell=1}^N
        \bigl(L_\ell(u)[I_u^{4,\ell}]\bigr)^\top dY_u^\epsilon
      +\langle I_u^5,g(u)\rangle_{R(u)}\,du
    \Biggr].
  \end{align*}
  The variation-of-constants formula then gives
  \eqref{eq:generalized-Ito-formula}.  This completes the induction.
  Finally, finite sums and the deterministic contractions defining
  \(\mathcal J_{t,\delta}^k\) preserve the locally uniform
  \(L^2\)-convergence, and hence the result follows for every
  \(I_t^\delta\in\mathcal J_{t,\delta}^k\).
\end{proof}

Based on this theorem, we can define the non-adapted integral as the limit of the discrete sum.

\begin{definition}[Generalized iterated integral]
  \label{def:generalized-iterated-integral}
  Fix an admissible representation
  \begin{align*}
    I_t^\delta=\sum_{\bm j\in D_\delta^k(t)}
      F(s_{j_1},\ldots,s_{j_k};t)
      \in\mathcal I_{t,\delta}^k.
  \end{align*}
  The generalized iterated integral associated with this representation is
  the locally uniform \(L^2\)-limit \(I_t\) furnished by
  Theorem~\ref{theorem:convergence-of-sum}.  We write it symbolically as
  \begin{align}
    I_t=
    \int_0^t\int_0^{t_k}\cdots\int_0^{t_2}
      G(t_1,\ldots,t_k;t),
    \label{eq:generalized-iterated-integral}
  \end{align}
  where \(G(t_1,\ldots,t_k;t)\), including its differentials, is obtained
  from \(F(s_{j_1},\ldots,s_{j_k};t)\) by replacing \(s_{j_p}\) with \(t_p\)
  and making the following replacements:
  \begin{align*}
    &\tilde E_t[\Delta_{s_{j_p}}\bar X]
      \longmapsto \partial_{t_p}\mu_{t_p;t}\,dt_p,\\
    &{\rm Cov}_{\tilde E_t}
      (\bar X_{s_{j_p}},\Delta_{s_{j_q}}\bar X)
      \longmapsto \partial_{t_q}\Gamma(t_p,t_q;t)\,dt_q,\\
    &{\rm Cov}_{\tilde E_t}
      (\Delta_{s_{j_p}}\bar X,\Delta_{s_{j_q}}\bar X)
      \longmapsto
      \partial_{t_p}\partial_{t_q}\Gamma(t_p,t_q;t)\,dt_pdt_q,\\
    &\Delta_{s_{j_p}}\eta\longmapsto
    \begin{cases}
      dt_p,
        &\eta_u=u,\\
      R(t_p)^{-1}dt_p,
        &\displaystyle \eta_u=\int_0^uR(r)^{-1}\,dr,\\
      R(t_p)^{-1}dY_{t_p}^\epsilon,
        &\displaystyle \eta_u=\int_0^uR(r)^{-1}\,dY_r^\epsilon.
    \end{cases}
  \end{align*}
  For a fixed representation of
  \(J_t^\delta\in\mathcal J_{t,\delta}^k\) as a finite sum of deterministic
  contractions of such expressions, its generalized integral is defined
  term by term.  We retain the degree of this chosen discrete representation
  and denote the collection of all such limits by \(\mathcal J_t^k\).
\end{definition}

\begin{remark}
  The notation in \eqref{eq:generalized-iterated-integral} is understood
  componentwise.  After all tensor contractions are expanded into
  components, each scalar component is a finite sum of mixed iterated
  integrals whose differentials are uniquely determined by the fixed
  admissible representation and the replacement rules above.  These
  integrals are defined through their discrete \(L^2\)-limits; in particular,
  the notation does not assert that every intermediate integral is an
  ordinary adapted It\^o integral.
\end{remark}

\begin{corollary}[Justification of the Wick--Fubini expansion]
  \label{cor:Wick-Fubini-justification}
  Let \(L\) be an iterated integral of the form
  \eqref{eq:iterated-variation-term}, and let \(L^\delta\) be its
  discretization \eqref{eq:Ldelta}.  The expression obtained by applying
  Wick's formula to \(\tilde E_t[L^\delta]\), rearranging the resulting
  finite sums, and then replacing each sum by its generalized integral in
  the sense of Definition~\ref{def:generalized-iterated-integral} is equal
  to \(\tilde E_t[L]\) almost surely.  Consequently, the Wick--Fubini
  expansions used in Section~\ref{section:1dim-example} are rigorously
  represented by the corresponding generalized integrals.
\end{corollary}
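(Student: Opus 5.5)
The argument combines three facts already in hand: the subsequence convergence \eqref{eq:convergence-Et-Ldelta}, the exact discrete Wick decomposition of Proposition~\ref{prop:discrete-Wick-decomposition}, and the $L^2$-convergence of admissible sums from Theorem~\ref{theorem:convergence-of-sum}. Identifying the two almost-sure limits of one sequence then gives the equality.

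First, by Propositions~\ref{prop:convergence-Ldelta} and \ref{prop-convergence-under-tilde-E}, there is a sequence $\delta_i\to0$ such that $\tilde E_t[L^{\delta_i}]\to\tilde E_t[L]$ $Q^\epsilon$-a.s. Next, for each fixed $\delta$, the quantity $\tilde E_t[L^\delta]$ is the conditional expectation of a polynomial in finitely many Gaussian variables: the values $X^{[1]}_{s_j}$, $X^{[1]}_t$ and the increments $\Delta_{s_j}V$, all of which are coordinates of $\bar X$. The integrator increments $\Delta\eta$ involving $Y^\epsilon$ or $dt$ are $\mathcal Y_t^\epsilon$-measurable, so they factor out of $\tilde E_t$. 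This is a genuine finite sum, so Fubini's theorem (linearity over the finite index set $D_\delta^k(t)$) and Wick's formula hold exactly, using the Gaussianity in Theorem~\ref{theorem:linear-smoothing}. Proposition~\ref{prop:discrete-Wick-decomposition} therefore gives an exact identity
\[
\tilde E_t[L^\delta]=\sum_{m}S_m(t)\Bigl[\sum_{\bm j\in D_\delta^{k}(t)}F^m(s_{j_1},\ldots,s_{j_k};t)\Bigr]=:J_t^\delta,
\]
with each $F^m\in\mathcal A_{t,\delta}^k$. The representation, that is the choice of $F^m$, $S_m$ and their number, is independent of $\delta$, because the Wick pairings are determined only by the combinatorial pattern of the factors in \eqref{eq:Ldelta}. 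This uniformity is the point I would verify most carefully. In particular, the diagonal case of two labels sharing a cell must produce $\mathrm{Cov}(\Delta_{u_i}\bar X,\Delta_{u_i}\bar X)$, which the class already permits.

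Then Theorem~\ref{theorem:convergence-of-sum}, applied term by term, gives $J_t^\delta\to J_t$ in $L^2(Q^\epsilon)$, where $J_t$ is the generalized integral of Definition~\ref{def:generalized-iterated-integral} attached to this fixed representation. Passing to a further subsequence of $\delta_i$, we get $J_t^{\delta_i}\to J_t$ a.s. as well. Since $J_t^{\delta_i}=\tilde E_t[L^{\delta_i}]$ for every $i$, the two limits coincide a.s., so $\tilde E_t[L]=J_t$ a.s.

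For the final sentence of the corollary, I would check that every manipulation in Section~\ref{section:1dim-example} has this form. In the expressions $n_t^{[2]}$ and $f_t$, the formal symbols $\tilde E_t[dV_s]$, $\mathrm{Cov}(X^{[1]},dV_s)$ and $\mu_{s;t}^X\Gamma^{XX}(t,s;t)\,dY_s^\epsilon$ are exactly the images under the replacement rules of Definition~\ref{def:generalized-iterated-integral} of the discrete factors $\tilde E_t[\Delta\bar X]$, $\mathrm{Cov}(\bar X,\Delta\bar X)$ and $\Delta\eta$. The main obstacle I expect is bookkeeping rather than analysis. Each Wick term must lie in $\mathcal A_{t,\delta}^k$ with every difference label occurring exactly once; this needs the rewriting $X^{[p]}\mapsto$ polynomial functionals of $(X^{[1]},V)$ in \eqref{eq:iterated-variation-term}, so that each $\Delta_{u_i}$ sits in exactly one factor. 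The right-derivative conventions in the limit, such as $\partial_u\Gamma^{XV}(s,u;t)|_{u=s+}$, must match the It\^o (left-point) discretization, which holds because $\Delta_u\bar X=\bar X_{u+\delta}-\bar X_u$ is taken forward.
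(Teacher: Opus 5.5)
Your proposal is correct and follows the paper's proof: the exact discrete Wick decomposition of $\tilde E_t[L^\delta]$ via Proposition~\ref{prop:discrete-Wick-decomposition}, the $L^2$-limit from Theorem~\ref{theorem:convergence-of-sum}, and identification with the almost-sure limit along the subsequence in \eqref{eq:convergence-Et-Ldelta}. Extracting a further a.s.-convergent subsequence is just an explicit form of the paper's appeal to uniqueness of limits in probability, and your check that the Wick representation is independent of $\delta$ makes explicit a point the paper leaves implicit (your diagonal-cell worry is moot, since $D_\delta^k(t)$ has strictly increasing indices).
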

\begin{proof}
  For every \(\delta>0\), Wick's formula and all rearrangements of the
  finite sums are exact.  Proposition~\ref{prop:discrete-Wick-decomposition}
  shows that the resulting expression belongs to
  \(\mathcal J_{t,\delta}^k\), and
  Theorem~\ref{theorem:convergence-of-sum} gives its \(L^2\)-limit.
  Along the sequence in \eqref{eq:convergence-Et-Ldelta}, the same expression
  converges almost surely to \(\tilde E_t[L]\).  Uniqueness of the limit in
  probability therefore identifies the two limits.
\end{proof}

Equation \eqref{eq:generalized-Ito-formula} can now be written in integral
form as follows.
  \begin{theorem}[Generalized It\^o formula]\label{theorem:genralized-ito}
    Let \(I_t\in\mathcal J_t^k\) admit the representation
    \begin{align}
      I_t
      =\int_0^tJ^0_{s;t}\,ds
       +\int_0^t
        \bigl(J^Y_{s;t}\otimes R(s)^{-1}\bigr)^\top dY_s^\epsilon,
      \label{eq:generalized-integral-representation}
    \end{align}
    where \(J^0_{s;t}\) and \(J^Y_{s;t}\) are \((k-1)\)-fold admissible
    generalized integrands
    over \(0<t_1<\cdots<t_{k-1}<s\), and may depend on the terminal time
    \(t\).  A deterministic factor \(R(s)^{-1}\) multiplying \(ds\) may be
    absorbed into \(J^0_{s;t}\).  Suppose that, for
    \(\rho\in\{0,Y\}\) and \(0\leq s\leq t\),
    \begin{align}
      d_tJ^\rho_{s;t}
      =P(t)J^\rho_{s;t}\,dt+U^\rho_{s;t}\,dt
       +(B^\rho_{s;t})^{\top}dY_t^\epsilon,
      \label{eq:terminal-differential-generalized-integrand}
    \end{align}
    and that all the generalized integrals appearing below belong to the
    corresponding classes \(\mathcal J_t^\ell\).  Then \(I_t\) is an
    \(\{\mathcal Y_t^\epsilon\}\)-adapted semimartingale and
    \begin{align}
      \begin{split}
        dI_t={}&P(t)I_t\,dt+J^0_{t;t}\,dt
        +\bigl(J^Y_{t;t}\otimes R(t)^{-1}\bigr)^\top dY_t^\epsilon\\
        &+\left\{
          \int_0^t U^0_{s;t}\,ds
          +\int_0^t
            \bigl(U^Y_{s;t}\otimes R(s)^{-1}\bigr)^\top dY_s^\epsilon
        \right\}dt\\
        &+\left\{
          \int_0^t B^0_{s;t}\,ds
          +\int_0^t B^Y_{s;t}\otimes R(s)^{-1}dY_s^\epsilon
        \right\}^\top dY_t^\epsilon\\
        &+\langle B^Y_{t;t},R(t)^{-1}\rangle_{R(t)}\,dt.
      \end{split}
      \label{eq:generalized-Ito-integral-form}
    \end{align}
    Here, all tensor products and contractions in this formula are understood
    componentwise, as in Theorem~\ref{theorem:convergence-of-sum}.

    For the terminal differentials and boundary terms furnished by
    Proposition~\ref{prop:triangular-terminal-differentiation} and
    Lemma~\ref{lem:boundary-degree-reduction}, all newly occurring generalized
    integrals on the right-hand side of
    \eqref{eq:generalized-Ito-integral-form} have degree strictly smaller
    than \({\rm deg}I_t\).
  \end{theorem}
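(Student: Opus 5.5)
The plan is to read Theorem~\ref{theorem:genralized-ito} as the integral-form restatement of \eqref{eq:generalized-Ito-formula} and to reduce it to Theorem~\ref{theorem:convergence-of-sum}. By linearity and the term-by-term definition of \(\mathcal J_t^k\), it suffices to treat one admissible representation \(I_t^\delta\in\mathcal I_{t,\delta}^k\) with its limit \(I_t\). The factorization \(F=\alpha\otimes(w_{\cdot,u_k+\delta;t}-w_{\cdot,u_k;t})\) then identifies the two parts of \eqref{eq:generalized-integral-representation}. The part \(\alpha\otimes z\,du_k\) is \(J^0_{s;t}\,ds\). The part \(\alpha\otimes g(u_k)\,dY_{u_k}\) is \(J^Y_{s;t}\otimes R(s)^{-1}dY_s\), with \(g=R^{-1}\) in the \(dY\)-case and \(g=0\) otherwise. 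Here \(J^0_{s;t}\) and \(J^Y_{s;t}\) are the \((k-1)\)-fold generalized integrals, i.e.\ the limits of \(\sum_{D_\delta^{k-1}(s)}\alpha_{\cdot,s;t}\otimes z_{\cdot,s;t}\) and \(\sum\alpha_{\cdot,s;t}\). Adaptedness and the semimartingale property are then immediate from the explicit variation-of-constants expression for \(I_t\) obtained at the end of the proof of Theorem~\ref{theorem:convergence-of-sum}. That expression is a \(dt\)-integral plus an ordinary It\^o integral of adapted integrands \(I_u^1,\dots,I_u^5\), which are themselves adapted limits by the induction.

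The main step is matching the terms of \eqref{eq:generalized-Ito-formula} with those of \eqref{eq:generalized-Ito-integral-form}. The boundary terms \(I_t^1\,dt\) and \((I_t^2\otimes g(t))^\top dY_t\) are, by \eqref{eq:convergence-I1}--\eqref{eq:convergence-I2}, exactly \(J^0_{t;t}\,dt\) and \((J^Y_{t;t}\otimes R(t)^{-1})^\top dY_t\).

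For the interior terms I would apply Proposition~\ref{prop:triangular-terminal-differentiation} to \(F\) and regroup \(d_tF\) according to the factorization. Since \(w\) depends on \(t\) only through \(\mu_{\cdot;t}\) and \(\Gamma(\cdot,\cdot;t)\), the product rule gives \(d_tF=(d_t\alpha)\otimes\Delta w+\alpha\otimes d_t\Delta w+(\text{cross variation})\). Integrating in the last variable, the \(dt\) parts collect into \(P J^\rho+U^\rho\) and the \(dY_t\) parts into \(B^\rho\), which is precisely the hypothesis \eqref{eq:terminal-differential-generalized-integrand}. With this regrouping, \(\sum_r K_r[I^{3,r}]\) becomes \(\int U^0\,ds+\int (U^Y\otimes R^{-1})^\top dY_s\), and \(\sum_\ell L_\ell[I^{4,\ell}]\) becomes \(\int B^0\,ds+\int B^Y\otimes R^{-1}dY_s\). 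This is justified by uniqueness of the \(L^2\)-limits of the corresponding discrete sums. The correction \(\langle I^5_t,g(t)\rangle_{R(t)}\,dt\) has \(I^5\) defined from \(\chi\), the \(dY_t\)-coefficient of \(d_t\alpha\); evaluated at \(s=t\) this is \(B^Y_{t;t}\), and with \(g=R^{-1}\) it gives \(\langle B^Y_{t;t},R(t)^{-1}\rangle_{R(t)}\,dt\).

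I expect the main obstacle to be this identification. One has to show that \(B^Y_{t;t}\) coincides with the limit \(I_t^5\). One also has to show that terms where \(t\)-differentiation hits \(\Delta w\) itself (for instance \(w=\mu_{u_k;t}\)) contribute no extra covariation at the diagonal. For the latter I would use \eqref{eq:ds-mu} and \eqref{eq:d-Gamma}, which show those boundary derivatives are of finite variation (\(g=0\)), so only \(\chi\) from \(\alpha\) pairs with \(g\,dY\).

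The final degree assertion follows directly. The new integrals are built from \(G_r\), \(H_\ell\) (degree \(<\deg F\) by Proposition~\ref{prop:triangular-terminal-differentiation}), from \(\alpha\otimes z\) at \(u_k=t\) (by Lemma~\ref{lem:boundary-degree-reduction}), and from \(\alpha\) and \(\chi\) with one fewer difference label, which removes at least one from \(k\) in \(\deg=k+N\).
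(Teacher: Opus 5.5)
Your proposal is correct and follows the paper's route. The paper gives this theorem no separate proof: it presents it as the integral-form rewriting of \eqref{eq:generalized-Ito-formula} from Theorem~\ref{theorem:convergence-of-sum}, with $J^0_{t;t}=I^1_t$, $J^Y_{t;t}=I^2_t$, the interior $U^\rho$- and $B^\rho$-integrals given by the limits $I^{3,r}_t$ and $I^{4,\ell}_t$, the covariation term coming from $B^Y_{t;t}=I^5_t$, and the degree claim inherited from Proposition~\ref{prop:triangular-terminal-differentiation} and Lemma~\ref{lem:boundary-degree-reduction}. Your term-by-term matching supplies details the paper leaves implicit, in particular that only the $\chi$-part of $d_t\alpha$ pairs with $g\,dY$, and that $\alpha$ and $\chi$ lose one difference label.
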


  The degree reduction yields the following closure result.
  \begin{theorem}[Closure of coefficient evolution]
    \label{theorem:closure-coefficient-evolution}
    Fix an admissible representation of a process
    \(I_t\in\mathcal J_t^k\).  Then \(I_t\) can be embedded into a finite
    vector of admissible limit processes that satisfies a closed
    finite-dimensional stochastic differential equation driven by
    \(Y^\epsilon\).
  \end{theorem}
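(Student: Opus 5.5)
The proof will be by strong induction on the degree of the fixed admissible representation, with the closed system built as the union of finitely many ``descendant'' processes generated by repeated terminal-time differentiation.

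\textbf{Base case and generating family.} An element of \(\mathcal J_t^0\), and more generally any degree-zero expression, contains no smoothing fields. It is therefore a deterministic contraction of factors such as \(f_r\), \(\Phi_r\), \(\mu_t\), \(\Gamma(t)\) and \(\Gamma(t,\cdot\,;t)\) evaluated at the terminal time. Adjoin the Kalman--Bucy pair \((\mu_t,\Gamma(t))\) of Theorem~\ref{theorem:linear-smoothing} and the fundamental solutions \(\Phi_r\) of \eqref{eq:Phi-r}. Each of these satisfies a finite-dimensional SDE or ODE driven by \(Y^\epsilon\). This finite ``base vector'' is the starting point for every closure.

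\textbf{Induction step.} Given \(I_t\in\mathcal J_t^k\) with its representation, apply Theorem~\ref{theorem:genralized-ito}, equivalently \eqref{eq:generalized-Ito-formula} of Theorem~\ref{theorem:convergence-of-sum}. This yields
\[
dI_t=P(t)I_t\,dt+(\text{linear combinations of }I^1_t,I^2_t,I^{3,r}_t,I^{4,\ell}_t,I^5_t)\,dt\ \text{or}\ dY_t^\epsilon .
\]
The coefficients \(P,K_r,L_\ell,g,R\) are deterministic functions of \(t\), or polynomial in the base vector (\(\mu_t\), \(\Gamma(t)\)). By Lemma~\ref{lem:boundary-degree-reduction} and Proposition~\ref{prop:triangular-terminal-differentiation}, every new process \(I^1,\dots,I^5\) is the limit of an admissible expression whose degree is strictly less than \(\deg I_t\). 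It is also a finite sum, since \(M\) and \(N\) are finite. By the induction hypothesis, each of these finitely many lower-degree processes embeds into a finite vector solving a closed SDE. Take the union of these vectors together with \(I_t\) itself and the base vector. The union is finite, and its drift and diffusion coefficients involve only components already in the union. This gives the closed system, and its initial values are read off at \(t=0\): the empty-simplex sums vanish.

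\textbf{Main obstacle.} The delicate point is ensuring that the induction is well-founded on the chosen representations. The processes \(I^1,\dots,I^5\) must again come with admissible representations to which Theorem~\ref{theorem:genralized-ito} applies, and their degree must be computed in that representation. Proposition~\ref{prop:triangular-terminal-differentiation} and Lemma~\ref{lem:boundary-degree-reduction} guarantee exactly this, since the \(G_r\), \(H_\ell\), \(\alpha\otimes z\), \(\alpha\) and \(\chi\) are produced in \(\mathcal B^{k}_{t,\delta}\) or \(\mathcal B^{k-1}_{t,\delta}\) with strictly lower degree.

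I would also verify the following. Boundary evaluations at \(s=t\), such as \(\Gamma(t,t;t)=\Gamma(t)\) and \(\mu_{t;t}=\mu_t\), turn fields into terminal quantities that are not counted in \(N(F)\). Hence degree zero really is reached after at most \(\deg I_t\) differentiations along every branch. A secondary issue is that contractions by \(P(t)\) or by products with \(\mu_t,\Gamma(t)\) are nonlinear in the base vector. This is harmless for closure, because the resulting SDE is allowed to have coefficients polynomial in previously included components.

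Finally, since the degree is at most \(k+N(F)\) and each branching is finite, the total count of components is finite. A bound on this count could be given afterward, though the theorem only asserts finiteness.
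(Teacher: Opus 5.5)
Your proposal is correct and takes essentially the same route as the paper. The paper also applies Theorem~\ref{theorem:genralized-ito} to write $dI_t$ in terms of $I_t$ itself and finitely many admissible processes of strictly lower degree, iterates until the nonnegative integer degree reaches zero, and closes the degree-zero endpoint fields using Theorem~\ref{theorem:linear-smoothing} and the defining deterministic linear equations. Your strong-induction phrasing, which takes the union of the closed vectors for the lower-degree processes together with the base vector $(\mu_t,\Gamma(t),\Phi_r)$, is a more explicit bookkeeping of the paper's ``collect the finitely many processes encountered'' step.
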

  \begin{proof}
    Theorem~\ref{theorem:genralized-ito} expresses the differential of each
    process in terms of the process itself and finitely many admissible
    processes of strictly lower degree.  Repeating this construction must
    terminate because the degree is a nonnegative integer.  The degree-zero
    endpoint fields are closed under the equations in
    Theorem~\ref{theorem:linear-smoothing} and the defining deterministic
    linear equations.  Collecting the finitely many processes encountered in
    this recursion gives the asserted closed system.
  \end{proof}

  \begin{proposition}
    \label{prop:polynomial-dimension-fixed-degree}
    Fix a finite family of admissible representation templates of degree at
    most \(n\).  Suppose that the number and tensor orders of the factors in
    these templates, including the endpoint factors \(\mu_t\) and
    \(\Gamma(t,u;t)\), are fixed independently of \(d_1,d_2\), and \(m_1\).
    Then there exist constants \(C>0\) and \(r\in\mathbb N\), depending only
    on \(n\) and the fixed templates, such that the augmented state vector
    constructed in Theorem~\ref{theorem:closure-coefficient-evolution}
    contains at most
    \begin{align*}
      C(d_1+d_2+m_1)^r
    \end{align*}
    scalar components.  In particular, for fixed degree and fixed
    representation structure, the dimension of the closed system grows at
    most polynomially in the total system dimension.
  \end{proposition}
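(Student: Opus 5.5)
The count splits into two parts: the number of distinct admissible processes produced by the recursion of Theorem~\ref{theorem:closure-coefficient-evolution}, and the number of scalar components of each one. The number of distinct processes (as \emph{templates}, i.e.\ up to the choice of tensor indices) is bounded independently of the dimensions. The number of scalar components of each template is a product of factor dimensions, each of which is linear in $d_1+d_2+m_1$.

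\textbf{Step 1: the recursion is finite and dimension-free.} By Proposition~\ref{prop:triangular-terminal-differentiation}, Lemma~\ref{lem:boundary-degree-reduction} and Theorem~\ref{theorem:genralized-ito}, each step applies a finite set of symbolic rules to a template:
\begin{itemize}[label=$\cdot$]
  \item the product rule for \eqref{eq:Phi-r};
  \item the identities \eqref{eq:d-mu-s}--\eqref{eq:d-gamma-tut};
  \item the boundary formulas \eqref{eq:d-Gamma} and \eqref{eq:ds-mu}.
\end{itemize}
Each rule replaces one field of the template by a field of strictly lower degree, or by an endpoint factor $\mu_t$, $\Gamma(t,u;t)$, $\Gamma(t)$, possibly multiplied by a deterministic coefficient $\bar a$, $\bar b\bar b^\top$ or $\bar c^\top R^{-1}\bar c$. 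This rule set does not depend on $d_1,d_2,m_1$, because the dimensions enter only through the sizes of the matrices, never through the combinatorics.

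Since the degree is a nonnegative integer at most $n$ and strictly decreases, the derivation tree has depth at most $n$. I will also bound the branching: each factor of a template produces $O(1)$ successor templates, and the number of factors is fixed and does not grow (one field is replaced by one field, plus bounded deterministic coefficients that are absorbed into contraction maps). So the total number $N_0$ of distinct templates reached depends only on $n$ and the initial family. I would formalize this as an induction on degree, with hypothesis ``the set of templates of degree $\le m$ reachable from the family is finite, of size bounded in terms of $m$ and the family''. The degree-zero endpoint system from Theorem~\ref{theorem:linear-smoothing} has $\mu_t,\Gamma(t)$ of size $O(d_1+m_1)$ and $O((d_1+m_1)^2)$, and $\Phi_r$ have fixed tensor order.

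\textbf{Step 2: component count per template.} A template is a contraction $S(t)[F]$, where $F$ is a tensor product of at most $K$ factors. Here $K$ bounds the number of factors, and also the maximal tensor order $o$ of each factor; both are fixed by hypothesis and preserved by Step 1. Each factor has each index ranging over a set of size at most $D:=d_1+d_2+m_1$: indeed $\bar X$ has dimension $d_1+m_1$, $Y$ has dimension $d_2$, and the $f_r,\Phi_r$ have indices of these types. So the limit process $I_t$, and any intermediate uncontracted tensor we may need to store, has at most $D^{Ko}$ components. Storing only the contracted output gives even fewer. The auxiliary deterministic quantities $\Phi_P$, $\Gamma(t)$ and $X^{[0]}$ are polynomial in $D$ as well.

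\textbf{Step 3: conclusion.} The augmented state vector therefore has at most $N_0 D^{Ko}+C'D^2$ scalar components, which is at most $CD^r$ with $r=Ko$ (or $\max(Ko,2)$) and $C$ depending only on $n$ and the templates.

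The main obstacle is Step 1's claim that the number and tensor orders of factors stay bounded along the recursion. The coefficient $K_r(t)$ and the terms $L_\ell$ introduce factors such as $\bar c^\top R^{-1}\bar c\,\Gamma(t,u;t)$. I must check that these are absorbed either into a deterministic contraction (whose size is polynomial in $D$ and is not part of the state) or into a replacement of one field by one endpoint factor, so that tensor order never increases. I would verify this rule by rule against \eqref{eq:d-mu-s}--\eqref{eq:d-gamma-tut}.
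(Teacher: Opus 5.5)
Your architecture is the paper's: a recursion of depth at most $n$ with dimension-free branching gives a number of tensor-valued processes independent of $d=d_1+d_2+m_1$, and each of these has at most $d^r$ scalar components. The gap is the claim your Steps~1--2 rest on. You assert that each rule replaces one field by one field, so that the number of factors and the tensor orders are \emph{preserved} (giving $r=Ko$). This is false, and the rule-by-rule check you announce in your last paragraph would fail.

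\begin{itemize}
\item By \eqref{eq:d-gamma-sut}, $\partial_t\Gamma(s,u;t)=-\Gamma(t,s;t)^\top\bar c(t)^\top R(t)^{-1}\bar c(t)\Gamma(t,u;t)$. This replaces one field by two endpoint factors. Each depends on an integration variable, so neither can be absorbed into the $t$-only maps $K_r(t)$, $L_\ell(t)$.
\item By \eqref{eq:d-mu-s}, the vector $\mu_{s;t}$ is replaced by the matrix $\Gamma(t,s;t)^\top$. Its extra index must stay free, to be paired with the $t$-only factor $\bar c(t)^\top R(t)^{-1}\{dY_t^\epsilon-\bar c(t)\mu_t\,dt\}$ that is pulled out of the $s$-sum. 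The new state variable therefore has higher tensor order.
\end{itemize}

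This is exactly the chain $(\mu^X_{s;t})^2+\Gamma^{XX}(s,s;t)\to\mu^X_{s;t}\Gamma^{XX}(t,s;t)\to\Gamma^{XX}(t,s;t)^2$ of Remark~\ref{rem:finite-closure-mechanism}. In dimension $d_1$, the analogues of $m_t^{[2]}$, $m_t^{[2,1]}$, $m^{[2,2]}(t)$ have (unreduced) $d_1$, $d_1^2$, $d_1^3$ components. This is also why the leading power of $D_r(d_1)$ in Section~\ref{section:discussion} grows with $r$.

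The hypothesis only makes the initial factor counts and tensor orders independent of the dimensions; it does not make them invariant along the recursion. What you need instead, and what the paper's proof uses, is the following. Each rule application raises the number of factors and the tensor order by at most a fixed, dimension-free amount. The degree drops strictly from at most $n$, so at most $n$ applications occur along any branch. Hence factor counts and orders stay bounded in terms of $n$ and the initial templates, and the branching at every level, and so $N_0$, remains dimension-free. The exponent $r$ then depends on $n$ as well; it is not $Ko$ with $K,o$ read off from the initial family.
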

  \begin{proof}
    Put \(d=d_1+d_2+m_1\).  Terminal-time differentiation and boundary
    evaluation produce only finitely many new tensor-valued processes, and
    every newly occurring process has strictly lower degree.  Hence the
    recursion has depth at most \(n\), and the number of tensor-valued
    processes generated from the fixed templates is bounded by a constant
    independent of \(d\).  Moreover, because the initial tensor orders are
    fixed and only finitely many differentiation and contraction rules can
    be applied before the recursion terminates, the tensor order of every
    generated process is bounded by some \(r\) independent of \(d\).  Each
    such tensor has at most \(d^r\) scalar components.  Multiplying this bound
    by the dimension-independent number of generated tensor-valued processes
    proves the assertion.
  \end{proof}

  \section{Expansion of the conditional distribution}\label{section:expansion-of-distribution}
  So far, we have provided a recursive algorithm for computing the expansion
  of
  \(
    E[f(X_t^\epsilon)\mid\mathcal{Y}_t^\epsilon]
  \)
  for every polynomial \(f\).  In particular, the same algorithm gives the
  expansion of every conditional moment. Using this fact, we now show that, at any fixed order in \(\epsilon\), the conditional characteristic function can be calculated from a finite number of moments.

  \begin{proposition}\label{prop:Gaussian-polynomial-characteristic}
    Let 
    \begin{align*}
      \phi_t^\epsilon(u)
      :=E\left[e^{i\langle u,X_t^\epsilon\rangle}
        \middle|\mathcal Y_t^\epsilon\right]
    \end{align*}
    be the conditional characteristic function of $X_t^\epsilon$, and 
    \begin{align*}
      \psi_t^\epsilon(u):=\exp\left(
        i\langle u,X_t^{[0]}+\epsilon\mu_{t;t}^X\rangle
        -\frac{\epsilon^2}{2}\Gamma^{XX}(t)[u^{\otimes 2}]
      \right)
    \end{align*}
    be the characteristic function of $X_t^{[0]}+\epsilon X_t^{[1]}$
    under $\widetilde E_t^\epsilon$. Then, for every $n\in\mathbb N$ and
    each fixed $u\in\mathbb R^{d_1}$, we have
    \begin{align*}
      \phi_t^\epsilon(u)=\psi_t^\epsilon(u)\left( 1+\sum_{r=1}^n\epsilon^rP_{r,t}(u) \right)+O_P^T(\epsilon^{n+1}),
    \end{align*}
    where $P_{r,t}(u)$ is a polynomial in $u$ of degree at most $r$ with
    $\mathcal Y_t^\epsilon$-measurable coefficients.
  \end{proposition}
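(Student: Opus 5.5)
The plan is to redo the Kallianpur--Striebel expansion of Theorem~\ref{theorem:filter-expansion} with the polynomial $f$ replaced by the bounded smooth function $x\mapsto e^{i\langle u,x\rangle}$. Then we factor out the Gaussian part coming from $X_t^{[0]}+\epsilon X_t^{[1]}$. Concretely, write
\[
X_t^\epsilon=X_t^{[0]}+\epsilon X_t^{[1]}+\epsilon\rho_t^\epsilon,\qquad \rho_t^\epsilon=\sum_{k=2}^n\frac{\epsilon^{k-1}}{k!}X_t^{[k]}+O(\epsilon^{n}),
\]
using Proposition~\ref{prop-expansion-of-SDE}. Then
\[
e^{i\langle u,X_t^\epsilon\rangle}=e^{i\langle u,X_t^{[0]}+\epsilon X_t^{[1]}\rangle}\Bigl(\sum_{m=0}^{n}\frac{(i\epsilon\langle u,\rho_t^\epsilon\rangle)^m}{m!}+\text{remainder}\Bigr).
\]
The remainder is bounded by $|\epsilon\langle u,\rho_t^\epsilon\rangle|^{n+1}$, which is $O(\epsilon^{n+1})$ in every $L^p$ for fixed $u$, since $|e^{ix}|=1$. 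I would first check that the proof of Theorem~\ref{theorem:filter-expansion} in \citet{kurisaki2026ks} only uses polynomial growth of $f$ together with these $L^p$ expansions. If so, the same argument yields the stated expansion in which each coefficient has the form $\tilde E_t[e^{i\langle u,X_t^{[0]}+\epsilon X_t^{[1]}\rangle}\,\Pi\,\mathcal I_t^{[i],\epsilon}]$. Here $\Pi$ is a polynomial in $\langle u,X_t^{[k]}\rangle$ ($k\geq2$), with total $u$-degree $m$ contributing at order at least $\epsilon^{m}$. The denominator factors $\tilde E_t[\mathcal I_t^{[k],\epsilon}]$ do not involve $u$.

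The key step is to evaluate $\tilde E_t[e^{i\langle u,\epsilon X_t^{[1]}\rangle}G]$ for a polynomial functional $G$ of $\bar X$, which may include the iterated observation integrals. By Theorem~\ref{theorem:linear-smoothing}, $\bar X$ is Gaussian under $\tilde E_t$. I would use the Gaussian change-of-measure (Cameron--Martin) identity: tilting a Gaussian law by $e^{i\langle \epsilon u,X_t^{[1]}\rangle}$ gives
\[
\tilde E_t\bigl[e^{i\epsilon\langle u,X_t^{[1]}\rangle}G(\bar X)\bigr]=\tilde E_t\bigl[e^{i\epsilon\langle u,X_t^{[1]}\rangle}\bigr]\,\tilde E_t\bigl[G(\bar X_\cdot+i\epsilon\Gamma(\cdot,t;t)u)\bigr].
\]
This is valid on the discretized sums \eqref{eq:Ldelta}, where it is a finite-dimensional Gaussian identity, and passes to the limit by Proposition~\ref{prop:convergence-Ldelta} and Corollary~\ref{cor:Wick-Fubini-justification}. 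The first factor times $e^{i\langle u,X_t^{[0]}\rangle}$ is exactly $\psi_t^\epsilon(u)$. Expanding the shifted polynomial $G$ in the shift $i\epsilon\Gamma(\cdot,t;t)u$ yields a polynomial in $u$ in which each power of $u$ carries an extra factor of $\epsilon$. Its coefficients are generalized integrals that are $\mathcal Y_t^\epsilon$-measurable, and by Theorem~\ref{theorem:closure-coefficient-evolution} they are computable by the recursive system.

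Finally, I would collect powers of $\epsilon$. Every appearance of $u$ comes with at least one factor $\epsilon$, either from $\epsilon\langle u,\rho\rangle$ or from the shift $i\epsilon\Gamma u$. Hence the coefficient $P_{r,t}(u)$ of $\epsilon^r$ has $u$-degree at most $r$. Dividing by the $u$-free denominator expansion, as in Theorem~\ref{theorem:filter-expansion}, and truncating at order $n$ gives the claim. The quotient keeps the form $\psi_t^\epsilon(u)(1+\sum_r\epsilon^rP_{r,t}(u))$, with the constant term equal to $1$ because at $\epsilon^0$ everything reduces to $\psi_t^\epsilon$.

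The main obstacle is the first step: controlling the remainder in the $O_P^T$ sense uniformly in $t\le T$ for the non-polynomial $f$. I expect to handle it using $|e^{ix}|\le1$ together with the conditional H\"older bound on $\tilde Z_t$ from Proposition~\ref{prop-convergence-under-tilde-E}, mirroring Lemma~A.3 of \citet{kurisaki2026ks}. A secondary point is making sure the $\epsilon$-dependence hidden in $\bar c$, $\mu$ and $\Gamma$ is kept unexpanded, as the statement allows, so that the coefficients remain $\epsilon$-dependent but $\mathcal Y_t^\epsilon$-measurable.
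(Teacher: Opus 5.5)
Your proposal is correct, and its outer structure matches the paper's. You expand $e^{i\langle u,X_t^\epsilon\rangle}$ around $X_t^{[0]}+\epsilon X_t^{[1]}$ with the Gaussian exponential left unexpanded, feed this into the Kallianpur--Striebel expansion of \citet{kurisaki2026ks}, note that the denominator factors are $u$-free, and count at least one power of $\epsilon$ per power of $u$. The difference is in how you extract $\psi_t^\epsilon(u)$ from $\widetilde E_t^\epsilon[e^{i\langle u,X_t^{[0]}+\epsilon X_t^{[1]}\rangle}G]$.

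The paper writes the exponential as the full Hermite generating series $\psi_t^\epsilon(u)\sum_{p\geq0}\frac{(i\epsilon)^p}{p!}H_p(X_t^{[1]}-\mu_t^X;\Gamma^{XX}(t))[u^{\otimes p}]$ and asserts its convergence in $L^2(Q^\epsilon)$. It then uses Hermite (chaos) orthogonality against the discretized polynomial $P_{k,t}^\delta(u)$ to discard every $p$ exceeding its degree. You instead use the complex exponential tilt, which gives directly the finite Taylor expansion of $G(\bar X+i\epsilon\Gamma(\cdot,t;t)u)$ in the shift.

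The two are dual forms of one Gaussian identity. Gaussian integration by parts turns $\widetilde E_t^\epsilon[H_p[u^{\otimes p}]G]$ into the expectation of the $p$-th directional derivative of $G$ along $\Gamma(\cdot,t;t)u$, so the resulting $P_{r,t}(u)$ coincide term by term.

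Your version has several advantages. It avoids the infinite series and the orthogonality step, and it makes the degree bound transparent. It also identifies the coefficients explicitly as admissible generalized integrals in which some $\bar X_s$-factors are replaced by the deterministic endpoint covariance $\Gamma(s,t;t)u$, and some $dV_s$ by $\partial_s\Gamma^{VX}(s,t;t)u\,ds$. These are fields the recursion of Theorem~\ref{theorem:closure-coefficient-evolution} already handles.

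For the limit $\delta\to0$, note that Corollary~\ref{cor:Wick-Fubini-justification} only concerns the unshifted Wick expansion of $\widetilde E_t^\epsilon[L]$. What you actually need is twofold. The tilted left side converges along a subsequence by Proposition~\ref{prop-convergence-under-tilde-E}, since the exponential factor is bounded. Each of the finitely many shifted discrete sums lies in $\mathcal J_{t,\delta}$, so Theorem~\ref{theorem:convergence-of-sum} applies to them.

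The paper's Hermite formulation, for its part, is the one that leads naturally to the Gaussian--Hermite (Edgeworth-type) reconstruction of the conditional law that motivates the proposition.
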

  \begin{proof}
    Let us consider the conditional characteristic function
    \begin{align*}
      \phi_t^\epsilon(u)
      :=E\left[e^{i\langle u,X_t^\epsilon\rangle}
        \middle|\mathcal Y_t^\epsilon\right].
    \end{align*}
    If we write
    \begin{align}
      F_t^{[r]}(u)
      :={}&\sum_{m=0}^{\lfloor r/2\rfloor}\frac{i^m}{m!}
      \sum_{\substack{r_1,\ldots,r_m\geq2\\
                      r_1+\cdots+r_m=r}}
      \prod_{q=1}^m
      \frac{\langle u,X_t^{[r_q]}\rangle}{r_q!},
      \label{eq:characteristic-higher-order-factor}
    \end{align}
    where the term with \(m=0\) is one if \(r=0\) and zero otherwise, then we can write
    \begin{align}
      e^{i\langle u,X_t^\epsilon\rangle}
      ={}&e^{i\langle u,X_t^{[0]}+\epsilon X_t^{[1]}\rangle}
      \sum_{m=0}^{\lfloor n/2\rfloor}\frac{i^m}{m!}
      \left( \sum_{r=2}^n \frac{\epsilon^r}{r!}
      \langle u,X_t^{[r]}\rangle \right)^m
      +O(\epsilon^{n+1})\nonumber\\
      ={}&e^{i\langle u,X_t^{[0]}+\epsilon X_t^{[1]}\rangle}
      \sum_{r=0}^{n}\epsilon^rF_t^{[r]}(u)
      +O(\epsilon^{n+1}),
      \label{eq:characteristic-function-state-expansion}
    \end{align}
    in $L^2(C([0,T]))$. Therefore, \citet{kurisaki2026ks} yields
    \begin{align}
      \phi_t^\epsilon(u)
      ={}&
      \Biggl(\sum_{k=0}^{n}\sum_{r=0}^k
      \widetilde E_t^\epsilon\left[
        e^{i\langle u,X_t^{[0]}+\epsilon X_t^{[1]}\rangle}
        F_t^{[k-r]}(u)\mathcal I_t^{[r],\epsilon}
      \right]\epsilon^k\Biggr)
      \nonumber\\
      &\times\Biggl\{1+\sum_{j=1}^{n}(-1)^j
      \Biggl(\sum_{k=1}^{n}
        \widetilde E_t^\epsilon[\mathcal I_t^{[k],\epsilon}]
        \epsilon^k\Biggr)^j\Biggr\}
      +O_P^T(\epsilon^{n+1}).
      \label{eq:conditional-characteristic-expansion}
    \end{align}
    All products in \eqref{eq:conditional-characteristic-expansion} are
    truncated after total order \(n\).  The powers of \(\epsilon\) contained
    in \(dY_s^\epsilon-\epsilon H_s^{[1]}ds\) inside
    \(\mathcal I_t^{[r],\epsilon}\) are also expanded and included in the
    total order.

    Furthermore, let us write
    \begin{align*}
      \psi_t^\epsilon(u)=\exp\left(
        i\langle u,X_t^{[0]}+\epsilon\mu_{t}^X\rangle
        -\frac{\epsilon^2}{2}\Gamma^{XX}(t)[u^{\otimes 2}]
      \right).
    \end{align*}
    Then, we have
    \begin{align*}
      e^{i\langle u,X_t^{[0]}+\epsilon X_t^{[1]}\rangle}
      ={}&\psi_t^\epsilon(u)\exp\left(
        i\epsilon\langle u,X_t^{[1]}-\mu_{t}^X\rangle
        +\frac{\epsilon^2}{2}\Gamma^{XX}(t)[u^{\otimes 2}]
      \right)\\
      ={}&\psi_t^\epsilon(u)\sum_{p=0}^\infty\frac{(i\epsilon)^p}{p!}
      H_p(X_t^{[1]}-\mu_{t}^X;\Gamma^{XX}(t))[u^{\otimes p}]
    \end{align*}
    in the conditional $L^2$-sense associated with $\widetilde E_t^\epsilon$,
    where the symmetric Hermite tensor $H_p$ is defined by
    \begin{align*}
      \exp\left(
        \langle z,x\rangle
        -\frac{1}{2}\Gamma^{XX}(t)[z^{\otimes2}]
      \right)
      =\sum_{p=0}^\infty\frac{1}{p!}
        H_p(x;\Gamma^{XX}(t))[z^{\otimes p}].
    \end{align*}
    Combining this with the discussion following Proposition 4.5 in \citet{kurisaki2026ks}, we obtain
    \begin{align}
      &\widetilde E_t^\epsilon\left[
        e^{i\langle u,X_t^{[0]}+\epsilon X_t^{[1]}\rangle}
        F_t^{[k-r]}(u)\mathcal I_t^{[r],\epsilon}
      \right]\nonumber\\
      ={}&\psi_t^\epsilon(u)\sum_{p=0}^\infty\frac{(i\epsilon)^p}{p!}
      \widetilde E_t^\epsilon\left[
        H_p(X_t^{[1]}-\mu_{t}^X;\Gamma^{XX}(t))[u^{\otimes p}]
        F_t^{[k-r]}(u)\mathcal I_t^{[r],\epsilon}
      \right].\label{eq:Hermite-expansion}
    \end{align}
    The series on the right-hand side converges in $L^2(Q^\epsilon)$.
    
    After expanding all powers of $\epsilon$, including those contained in
    $\mathcal I_t^{[r],\epsilon}$, fix a term of total order $k$ before the
    Hermite expansion. Due to \eqref{eq:convergence-Et-Ldelta}, its random
    factor can be approximated by a discrete sum $P_{k,t}^\delta(u)$ of the
    form \eqref{eq:Ldelta}. Since $P_{k,t}^\delta(u)$ is a polynomial of
    degree at most $k$ in $(X^{[1]}_{s_j},V_{s_j})$, Hermite orthogonality gives
    \begin{align*}
      \widetilde E_t^\epsilon\left[
        H_p(X_t^{[1]}-\mu_{t}^X;\Gamma^{XX}(t))[u^{\otimes p}]
        P_{k,t}^\delta(u)
      \right]=0
    \end{align*}
    for $p>k$. Passing to the subsequence in
    \eqref{eq:convergence-Et-Ldelta} shows that the corresponding summand in
    \eqref{eq:Hermite-expansion} also vanishes for $p>k$. Hence only finitely
    many Hermite terms contribute to each fixed power of $\epsilon$.
    Moreover, at total order $r$, the number of factors of $u$ arising from
    the Hermite tensor and from \eqref{eq:characteristic-higher-order-factor}
    is at most $r$. The factors from the second bracket in
    \eqref{eq:conditional-characteristic-expansion} do not depend on $u$.
    Thus, we obtain the desired polynomials $P_{r,t}(u)$.
  \end{proof}

  \begin{remark}
    The finite polynomial correction can equivalently be reorganized in terms of conditional cumulants. Since such a reorganization requires additional control of cumulant orders and of the resulting exponential remainder, we leave its systematic treatment to subsequent work.
  \end{remark}

  \section{Numerical illustration}\label{section:numerical-illustration}

  We illustrate the coefficient equations derived above using a
  one-factor positive-rate model with nonlinear drift, state-dependent
  diffusion, and nonlinear observation.  Specifically, consider the
  A\"{\i}t--Sahalia-type diffusion \citep{aitsahalia1996}
  \begin{align}
    dX_t^\epsilon
    ={}&
    \left\{
      \frac{0.2}{X_t^\epsilon}-0.3+0.6X_t^\epsilon
      -0.5(X_t^\epsilon)^2
    \right\}dt
    +\epsilon(X_t^\epsilon)^{3/2}dV_t,
    \qquad X_0^\epsilon=1,
    \label{eq:ait-sahalia-signal}\\
    dY_t^\epsilon
    ={}&\exp(-0.5X_t^\epsilon)\,dt+0.2\,dW_t,
    \qquad Y_0^\epsilon=0,
    \label{eq:ait-sahalia-observation}
  \end{align}
  where \(V\) and \(W\) are independent Brownian motions.  The observation
  function may be viewed as a stylized exponential transformation from a
  positive yield to a fixed-maturity bond-price signal.
  \begin{remark}\label{rem:localization-of-examples}
    The coefficients of the A\"{\i}t--Sahalia-type model do not satisfy
    the global boundedness assumptions imposed in Section~2.  Fix \(T>0\),
    and let
    \(K_T=\{X_t^{[0]}:0\leq t\leq T\}\).  Since \(K_T\) is compact, these
    coefficients may be replaced by functions in \(C_b^\infty\) that agree
    with the original coefficients on a neighborhood of \(K_T\).  If
    necessary, the same construction may be applied to the observation
    coefficient.  The expansion coefficients through third order depend only
    on the corresponding derivatives along the reference trajectory;
    therefore, all formulas below are unchanged by this replacement.  Thus,
    the calculations can be understood rigorously for the smoothly truncated
    models.  Passing from them to the original untruncated models requires a
    separate localization and uniform-integrability argument.
  \end{remark}

  We fix \(\epsilon=0.5\), \(T=10\), and the observation step
  \(\Delta t=0.01\).  Both the data-generating signal and the transitions used
  by the particle filter (PF) are simulated by a positivity-preserving
  log-Euler scheme with five substeps per observation interval.  For each of
  \(1000\) independent observation paths, we calculate the first-, second-,
  and third-order approximations.  The PF, which serves as the numerical
  reference, uses \(5000\) particles and resamples when the effective sample
  size falls below one half of the particle count.

  Table~\ref{tab:ait-sahalia-epsilon05} reports the root mean squared error
  over the \(1001\) observation times, averaged over the \(1000\) paths.
  Parentheses contain Monte Carlo standard errors across paths.
  \begin{table}[H]
    \centering
    \caption{Filtering results for
      \(\epsilon=0.5\), \(T=10\), and \(\Delta t=0.01\).}
    \label{tab:ait-sahalia-epsilon05}
    \begin{tabular}{lcc}
      \hline
      Method
      & RMSE from PF mean
      & RMSE from signal\\
      \hline
      First order
      & \(0.07896\;(0.00045)\)
      & \(0.35478\;(0.00421)\)\\
      Second order
      & \(0.07287\;(0.00049)\)
      & \(0.34643\;(0.00477)\)\\
      Third order
      & \(0.04957\;(0.00080)\)
      & \(0.34529\;(0.00457)\)\\
      Particle filter
      & ---
      & \(0.34069\;(0.00460)\)\\
      \hline
    \end{tabular}
  \end{table}

  The RMSE from the particle-filter mean decreases from \(0.07896\) at first
  order to \(0.04957\) at third order, a reduction of approximately \(37\%\).
  The third-order RMSE from the latent signal, \(0.34529\), is also close to
  the PF value, \(0.34069\).  Since the purpose of this experiment is to assess
  the finite-order approximation of the filtering expansion, the RMSE from
  the PF mean is the more direct measure of approximation accuracy.

  Figure~\ref{fig:ait-sahalia-epsilon05} shows a simulated path generated with
  random seed \(1\).  The lower panel displays the signed error relative to the
  PF mean and makes the improvement of the third-order approximation visible
  along the path.
  \begin{figure}[!t]
    \centering
    \includegraphics[width=\textwidth]
      {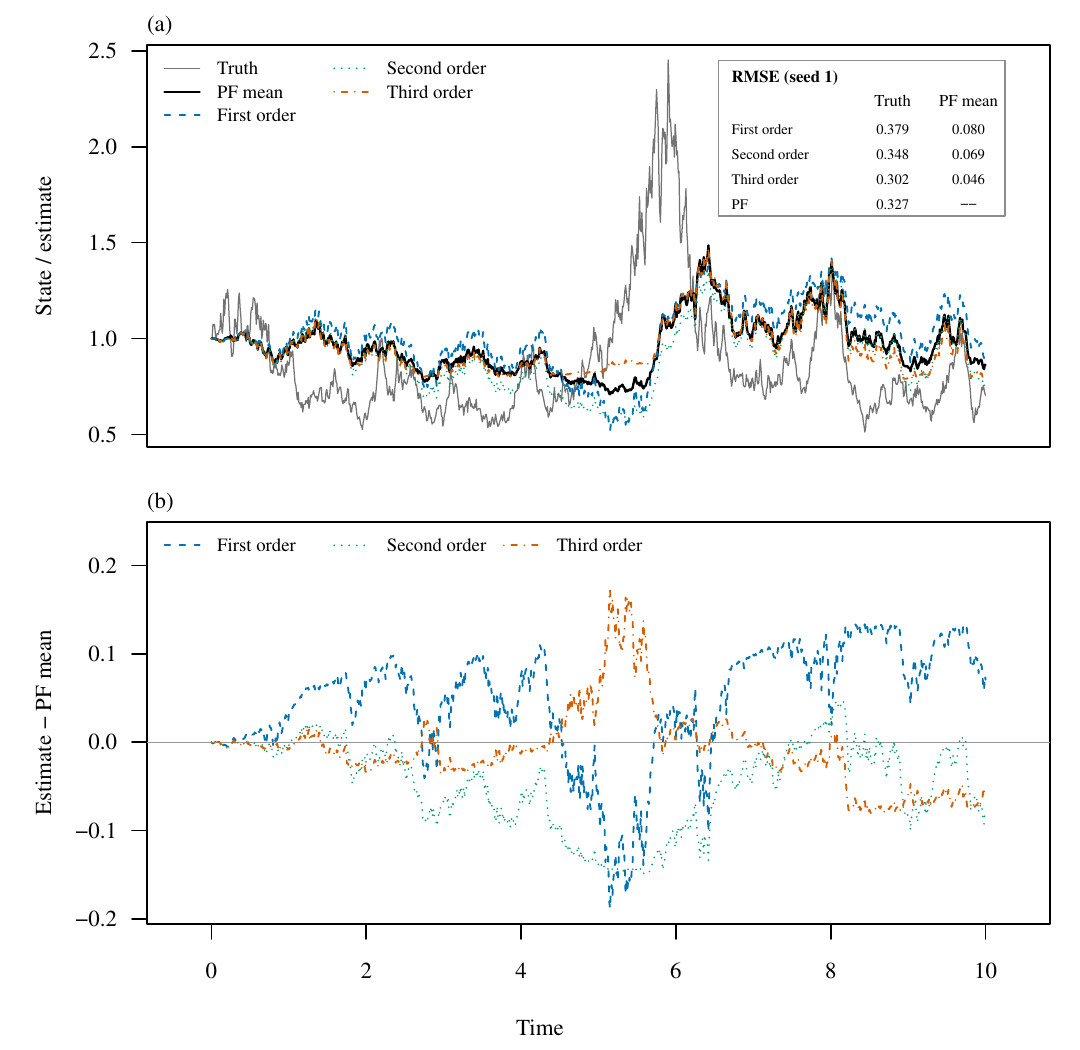}
    \caption{Filtering approximations for the A\"{\i}t--Sahalia-type model
      with \(\epsilon=0.5\) (random seed \(1\)).
      Panel (a) shows the signal, the particle-filter posterior mean, and the
      first-, second-, and third-order approximations; the inset reports the
      pathwise RMSE from
      the signal and from the particle-filter mean.  Panel (b) shows the
      signed difference between each approximation and the particle-filter
      mean.}
    \label{fig:ait-sahalia-epsilon05}
  \end{figure}

  \section{Discussion}\label{section:discussion}
  In this work, we provided a closure formula for the asymptotic expansion in nonlinear filtering, through the Wick expansion and the structure of the linear smoothing problem. The resulting calculation appears to have a very complicated tree structure with a large number of variables. However, the size of the system can be substantially reduced by choosing an appropriate change of variables, as shown in Section \ref{section:1dim-example}.

  In fact, the construction in Section \ref{section:1dim-example} extends
  componentwise to multidimensional systems.  Let \(D_r(d_1)\) denote the
  cumulative number of scalar unknown processes needed through order \(r\).
  By collecting terms of the same tensor type, one may take
  \begin{align*}
    D_1(d_1)={}&d_1+d_1^2,\\
    D_2(d_1)={}&2d_1+2d_1^2+d_1^3,\\
    D_3(d_1)={}&1+4d_1+4d_1^2+3d_1^3+d_1^4,\\
    D_4(d_1)={}&2+7d_1+8d_1^2+7d_1^3+5d_1^4+2d_1^5.
  \end{align*}
  Observation and driving-noise indices are contracted through the coefficient
  tensors; they affect the coefficients and the driving dimension, but not
  \(D_r(d_1)\).  The displayed values are unreduced upper bounds, and tensor
  symmetries can reduce them further. 

  Furthermore, the auxiliary variables used for the expansion of the
  conditional mean are sufficient to calculate the characteristic function
  in Section~\ref{section:expansion-of-distribution}.  In particular, all
  cumulants required at a fixed order can be recovered from the same
  variables.  The argument involving minimal recursion at general order for the conditional mean and cumulants is beyond the scope of
  this work and will be studied in future work.

\section*{Funding}
The author was supported by Japan Science and Technology Agency CREST
(Grant No.~JPMJCR2115), JSPS KAKENHI (Grant No.~JP24KJ0667), and the RIKEN
Special Postdoctoral Researcher Program.

\bibliographystyle{plainnat}
\bibliography{main}

@book{kunita2019stochastic,
  author    = {Kunita, Hiroshi},
  title     = {Stochastic Flows and Jump-Diffusions},
  publisher = {Springer},
  year      = {2019}
}

@book{bain_crisan2009filtering,
  author    = {Bain, Alan and Crisan, Dan},
  title     = {Fundamentals of Stochastic Filtering},
  volume    = {3},
  publisher = {Springer},
  year      = {2009}
}

@article{kurisaki2026ks,
  author        = {Kurisaki, Masahiro},
  title         = {Asymptotic Expansion of the Kallianpur-Striebel Formula under Perturbations of Linear Models},
  journal       = {arXiv preprint arXiv:2608.05842},
  year          = {2026},
  archivePrefix = {arXiv},
  eprint        = {2608.05842},
  primaryClass  = {math.PR},
  doi           = {10.48550/arXiv.2608.05842},
  url           = {https://arxiv.org/abs/2608.05842}
}

@article{KURISAKI2026104997,
  title    = {Pathwise representation of the smoothing distribution in continuous-time linear Gaussian models},
  journal  = {Stochastic Processes and their Applications},
  volume   = {199},
  pages    = {104997},
  year     = {2026},
  issn     = {0304-4149},
  doi      = {https://doi.org/10.1016/j.spa.2026.104997},
  url      = {https://www.sciencedirect.com/science/article/pii/S0304414926001298},
  author   = {Masahiro Kurisaki}
}

@article{aitsahalia1996,
  author  = {A{\"{\i}}t-Sahalia, Yacine},
  title   = {Testing Continuous-Time Models of the Spot Interest Rate},
  journal = {The Review of Financial Studies},
  volume  = {9},
  number  = {2},
  pages   = {385--426},
  year    = {1996},
  doi     = {10.1093/rfs/9.2.385}
}

@article{kalman1960,
  author  = {Kalman, Rudolf E.},
  title   = {A New Approach to Linear Filtering and Prediction Problems},
  journal = {Journal of Basic Engineering},
  volume  = {82},
  number  = {1},
  pages   = {35--45},
  year    = {1960},
  doi     = {10.1115/1.3662552}
}

@article{kalman1961,
  author  = {Kalman, Rudolf E. and Bucy, Richard S.},
  title   = {New Results in Linear Filtering and Prediction Theory},
  journal = {Journal of Basic Engineering},
  volume  = {83},
  number  = {1},
  pages   = {95--108},
  year    = {1961},
  doi     = {10.1115/1.3658902}
}

@article{stratonovich1960,
  author  = {Stratonovich, Ruslan L.},
  title   = {Conditional Markov Processes},
  journal = {Theory of Probability \& Its Applications},
  volume  = {5},
  number  = {2},
  pages   = {156--178},
  year    = {1960},
  doi     = {10.1137/1105015}
}

@article{kushner1964,
  author  = {Kushner, Harold J.},
  title   = {On the Differential Equations Satisfied by Conditional Probabilities of Markov Processes, with Applications},
  journal = {Journal of the Society for Industrial and Applied Mathematics, Series A: Control},
  volume  = {2},
  number  = {1},
  pages   = {106--119},
  year    = {1964},
  doi     = {10.1137/0302009}
}

@article{Zakai1969,
  author  = {Zakai, Moshe},
  title   = {On the Optimal Filtering of Diffusion Processes},
  journal = {Zeitschrift f{\"u}r Wahrscheinlichkeitstheorie und Verwandte Gebiete},
  volume  = {11},
  pages   = {230--243},
  year    = {1969},
  doi     = {10.1007/BF00536382}
}

@article{HAZEWINKEL1983331,
  author  = {Hazewinkel, M. and Marcus, S. I. and Sussmann, H. J.},
  title   = {Nonexistence of Finite-Dimensional Filters for Conditional Statistics of the Cubic Sensor Problem},
  journal = {Systems \& Control Letters},
  volume  = {3},
  number  = {6},
  pages   = {331--340},
  year    = {1983},
  doi     = {10.1016/0167-6911(83)90074-9}
}

@article{picard1986,
  author  = {Picard, Jean},
  title   = {Nonlinear Filtering of One-Dimensional Diffusions in the Case of a High Signal-to-Noise Ratio},
  journal = {SIAM Journal on Applied Mathematics},
  volume  = {46},
  number  = {6},
  pages   = {1098--1125},
  year    = {1986},
  doi     = {10.1137/0146065}
}

@article{picard1991,
  author  = {Picard, Jean},
  title   = {Efficiency of the Extended Kalman Filter for Nonlinear Systems with Small Noise},
  journal = {SIAM Journal on Applied Mathematics},
  volume  = {51},
  number  = {3},
  pages   = {843--885},
  year    = {1991},
  doi     = {10.1137/0151042}
}

@article{2003OcDyn..53..343E,
  author  = {Evensen, Geir},
  title   = {The Ensemble Kalman Filter: Theoretical Formulation and Practical Implementation},
  journal = {Ocean Dynamics},
  volume  = {53},
  number  = {4},
  pages   = {343--367},
  year    = {2003},
  doi     = {10.1007/s10236-003-0036-9}
}

@inproceedings{882463,
  author    = {Wan, Eric A. and Van Der Merwe, Rudolph},
  title     = {The Unscented Kalman Filter for Nonlinear Estimation},
  booktitle = {Proceedings of the IEEE 2000 Adaptive Systems for Signal Processing, Communications, and Control Symposium},
  pages     = {153--158},
  year      = {2000},
  doi       = {10.1109/ASSPCC.2000.882463}
}

@article{847749,
  author  = {Kushner, Harold J. and Budhiraja, Amarjit S.},
  title   = {A Nonlinear Filtering Algorithm Based on an Approximation of the Conditional Distribution},
  journal = {IEEE Transactions on Automatic Control},
  volume  = {45},
  number  = {3},
  pages   = {580--585},
  year    = {2000},
  doi     = {10.1109/9.847749}
}

@inproceedings{Brigo1995,
  author    = {Brigo, Damiano and Hanzon, Bernard and Le Gland, Fran{\c{c}}ois},
  title     = {A Differential Geometric Approach to Nonlinear Filtering: The Projection Filter},
  booktitle = {Proceedings of the 34th IEEE Conference on Decision and Control},
  volume    = {4},
  pages     = {4006--4011},
  year      = {1995},
  doi       = {10.1109/CDC.1995.479190}
}

@article{armstrong2019optimal,
  author  = {Armstrong, John and Brigo, Damiano and Rossi Ferrucci, Emilio},
  title   = {Optimal Approximation of {SDEs} on Submanifolds: The It{\^o}-Vector and It{\^o}-Jet Projections},
  journal = {Proceedings of the London Mathematical Society},
  volume  = {119},
  number  = {1},
  pages   = {176--213},
  year    = {2019},
  doi     = {10.1112/plms.12229}
}

@article{Gyongy2003,
  author  = {Gy{\"o}ngy, Istv{\'a}n and Krylov, Nicolai},
  title   = {On the Splitting-Up Method and Stochastic Partial Differential Equations},
  journal = {The Annals of Probability},
  volume  = {31},
  number  = {2},
  pages   = {564--591},
  year    = {2003},
  doi     = {10.1214/aop/1048516537}
}

@article{Germani01021988,
  author  = {Germani, Alfredo and Piccioni, Mauro},
  title   = {Semi-Discretization of Stochastic Partial Differential Equations on \(\mathbb{R}^d\) by a Finite-Element Technique},
  journal = {Stochastics},
  volume  = {23},
  number  = {2},
  pages   = {131--148},
  year    = {1988},
  doi     = {10.1080/17442508808833486}
}

@incollection{lototsky2011chaos,
  author    = {Lototsky, Sergey V.},
  title     = {Chaos Approach to Nonlinear Filtering},
  booktitle = {The Oxford Handbook of Nonlinear Filtering},
  pages     = {231--264},
  publisher = {Oxford University Press},
  year      = {2011}
}

@article{gordon1993,
  author  = {Gordon, Neil J. and Salmond, David J. and Smith, Adrian F. M.},
  title   = {Novel Approach to Nonlinear/Non-Gaussian Bayesian State Estimation},
  journal = {IEE Proceedings F: Radar and Signal Processing},
  volume  = {140},
  number  = {2},
  pages   = {107--113},
  year    = {1993},
  doi     = {10.1049/ip-f-2.1993.0015}
}

@inproceedings{4378823,
  author    = {Crisan, Dan},
  title     = {Particle Filters in a Continuous Time Framework},
  booktitle = {2006 IEEE Nonlinear Statistical Signal Processing Workshop},
  pages     = {73--78},
  year      = {2006},
  doi       = {10.1109/NSSPW.2006.4378823}
}

@article{6530707,
  author  = {Yang, Tao and Mehta, Prashant G. and Meyn, Sean P.},
  title   = {Feedback Particle Filter},
  journal = {IEEE Transactions on Automatic Control},
  volume  = {58},
  number  = {10},
  pages   = {2465--2480},
  year    = {2013},
  doi     = {10.1109/TAC.2013.2258825}
}

@article{snyder2008obstacles,
  author  = {Snyder, Chris and Bengtsson, Thomas and Bickel, Peter and Anderson, Jeff},
  title   = {Obstacles to High-Dimensional Particle Filtering},
  journal = {Monthly Weather Review},
  volume  = {136},
  number  = {12},
  pages   = {4629--4640},
  year    = {2008},
  doi     = {10.1175/2008MWR2529.1}
}

\clearpage
\newgeometry{margin=27mm}
\begingroup
\setcounter{section}{0}
\setcounter{equation}{0}
\renewcommand{\thesection}{S\arabic{section}}
\renewcommand{\theequation}{\thesection.\arabic{equation}}
\makeatletter
\renewcommand*{\theHsection}{supp.\arabic{section}}
\renewcommand*{\theHequation}{supp.\arabic{section}.\arabic{equation}}
\makeatother

\pdfbookmark[0]{Supplementary Material}{supplementary-material}
\begin{center}
  {\LARGE Supplement to ``Finite-Dimensional Recursions for\par
  Small-Noise Expansions in Nonlinear Filtering''\par}
  \vspace{1em}
  {\large Masahiro Kurisaki\par}
\end{center}
\vspace{2em}
\section{Terms treated in the main text}

In the one-dimensional setting of Section~3 of the main paper, the
third-order coefficient is decomposed as
\begin{align*}
  \frac16m_t^{[3]}+r_t^{[3]},
\end{align*}
where
\begin{align}
  m_t^{[3]}
  :={}&\widetilde E_t[X_t^{[3]}],\notag\\
  r_t^{[3]}
  :={}&\frac12\Cov_{\widetilde E_t}\left(
    X_t^{[1]},
    \int_0^t\frac{h''(X_s^{[0]})}{\sigma(s)^2}
      (X_s^{[1]})^2
      \left\{
        dY_s^\epsilon
        -\epsilon h'(X_s^{[0]})X_s^{[1]}\,ds
      \right\}
  \right)\notag\\
  &+\frac12\Cov_{\widetilde E_t}\left(
    X_t^{[1]},
    \int_0^t\frac{h'(X_s^{[0]})}{\sigma(s)^2}
      X_s^{[2]}
      \left\{
        dY_s^\epsilon
        -\epsilon h'(X_s^{[0]})X_s^{[1]}\,ds
      \right\}
  \right).
  \label{eq:r3-definition}
\end{align}
Section~3 derives the complete second-order system, treats the first
covariance in \eqref{eq:r3-definition}, and obtains a closed subsystem for
its \(dY^\epsilon\)-part.
The following calculations are left open there:
\begin{enumerate}
  \item the Wick expansion and closed system for \(m_t^{[3]}\);
  \item the remaining finite-variation contribution of the first
    covariance in \eqref{eq:r3-definition};
  \item the Wick expansion of the second covariance and the resulting
    closed system for \(r_t^{[3]}\).
\end{enumerate}
We derive these terms below by the same sequence used in Section~3:
Fubini's theorem, Wick expansion, terminal-time differentiation, and It\^o's
formula.

\section{Second-order quantities used below}

We use the notation of the main paper.  In particular,
\begin{align*}
  \mu_{s;t}^X&=\widetilde E_t[X_s^{[1]}],
  &\mu_{s;t}^V&=\widetilde E_t[V_s],\\
  \Gamma^{XX}(s,u;t)
  &=\Cov_{\widetilde E_t}(X_s^{[1]},X_u^{[1]}),
  &\Gamma^{XV}(s,u;t)
  &=\Cov_{\widetilde E_t}(X_s^{[1]},V_u).
\end{align*}
The fundamental solution is
\begin{align*}
  \Phi(s,t)
  =\exp\left(\int_s^t\alpha'(X_r^{[0]})\,dr\right).
\end{align*}

For \(0\leq s<t\), we use and extend the notation of Section~3, following
the same convention: random quantities carry their time arguments as
subscripts, whereas deterministic quantities carry them in parentheses.
Thus, write
\begin{align*}
  M_{s;t}^{[2]}
  :={}&(\mu_{s;t}^X)^2+\Gamma^{XX}(s,s;t),\notag\\
  M_{s;t}^{[2,1]}
  :={}&\mu_{s;t}^X\Gamma^{XX}(t,s;t),\notag\\
  M^{[2,2]}(s;t)
  :={}&\Gamma^{XX}(t,s;t)^2,\\
  N_{s;t}^{[2]}
  :={}&\mu_{s;t}^X\partial_s\mu_{s;t}^V
    +\left.\partial_u\Gamma^{XV}(s,u;t)\right|_{u=s+},\notag\\
  N_{s;t}^{[2,1]}
  :={}&\Gamma^{XX}(t,s;t)\partial_s\mu_{s;t}^V
    +\mu_{s;t}^X\partial_s\Gamma^{XV}(t,s;t),\notag\\
  N^{[2,2]}(s;t)
  :={}&\Gamma^{XX}(t,s;t)
    \partial_s\Gamma^{XV}(t,s;t).
\end{align*}

Thus the second-order quantities used below are precisely
\begin{alignat*}{2}
  m_t^{[2]}
  ={}&\int_0^t\Phi(s,t)\alpha''(X_s^{[0]})M_{s;t}^{[2]}\,ds,
  &\;
  n_t^{[2]}
  ={}&\int_0^t\Phi(s,t)\beta'(X_s^{[0]})N_{s;t}^{[2]}\,ds,
  \notag\\
  m_t^{[2,1]}
  ={}&\int_0^t\Phi(s,t)\alpha''(X_s^{[0]})M_{s;t}^{[2,1]}\,ds,
  &\;
  n_t^{[2,1]}
  ={}&\int_0^t\Phi(s,t)\beta'(X_s^{[0]})N_{s;t}^{[2,1]}\,ds,
  \notag\\
  m^{[2,2]}(t)
  ={}&\int_0^t\Phi(s,t)\alpha''(X_s^{[0]})M^{[2,2]}(s;t)\,ds,
  &\;
  n^{[2,2]}(t)
  ={}&\int_0^t\Phi(s,t)\beta'(X_s^{[0]})N^{[2,2]}(s;t)\,ds.
\end{alignat*}
The endpoint values needed below are
\begin{align*}
  M_{t;t}^{[2]}
  ={}&(\mu_t^X)^2+\Gamma^{XX}(t),
  &M_{t;t}^{[2,1]}
  ={}&\mu_t^X\Gamma^{XX}(t),
  &M^{[2,2]}(t;t)
  ={}&\Gamma^{XX}(t)^2,\notag\\
  N_{t;t}^{[2]}
  ={}&0,
  &N_{t;t}^{[2,1]}
  ={}&\beta(X_t^{[0]})\mu_t^X,
  &N^{[2,2]}(t;t)
  ={}&\beta(X_t^{[0]})\Gamma^{XX}(t).
\end{align*}

\section{Derivation of the third-variation mean}

The variation-of-constants formula for the third variation is
\begin{equation}
\begin{split}
  X_t^{[3]}={}&
  \int_0^t\Phi(s,t)
    \left\{
      3\alpha''(X_s^{[0]})X_s^{[1]}X_s^{[2]}
      +\alpha'''(X_s^{[0]})(X_s^{[1]})^3
    \right\}ds\\
  &+3\int_0^t\Phi(s,t)
    \left\{
      \beta'(X_s^{[0]})X_s^{[2]}
      +\beta''(X_s^{[0]})(X_s^{[1]})^2
    \right\}dV_s.
\end{split}
\label{eq:X3-explicit}
\end{equation}
where
\begin{equation}
\begin{split}
  X_s^{[2]}
  ={}&\int_0^s\Phi(u,s)\alpha''(X_u^{[0]})
      (X_u^{[1]})^2\,du\\
  &+2\int_0^s\Phi(u,s)\beta'(X_u^{[0]})
      X_u^{[1]}\,dV_u.
\end{split}
\label{eq:X2-explicit}
\end{equation}

\subsection{Fubini and Wick expansion}

For \(0\leq u<s<t\), Wick's formula gives the following six complete
brackets:
\begin{equation}
\begin{split}
  P_{1;s,u;t}^{(0)}
  :={}&\mu_{s;t}^XM_{u;t}^{[2]}
    +2\mu_{u;t}^X\Gamma^{XX}(s,u;t).
\end{split}
\label{eq:P10}
\end{equation}
\begin{equation}
\begin{split}
  P_{2;s,u;t}^{(0)}
  :={}&
    \{\mu_{s;t}^X\mu_{u;t}^X+\Gamma^{XX}(s,u;t)\}
      \partial_u\mu_{u;t}^V\\
  &+\mu_{u;t}^X\partial_u\Gamma^{XV}(s,u;t)
    +\mu_{s;t}^X
      \left.\partial_v\Gamma^{XV}(u,v;t)\right|_{v=u+}.
\end{split}
\label{eq:P20}
\end{equation}
\begin{equation}
\begin{split}
  P_{3;s;t}^{(0)}
  :={}&(\mu_{s;t}^X)^3
    +3\mu_{s;t}^X\Gamma^{XX}(s,s;t).
\end{split}
\label{eq:P30}
\end{equation}
\begin{equation}
\begin{split}
  P_{4;s,u;t}^{(0)}
  :={}&M_{u;t}^{[2]}\partial_s\mu_{s;t}^V
    +2\mu_{u;t}^X\partial_s\Gamma^{XV}(u,s;t).
\end{split}
\label{eq:P40}
\end{equation}
\begin{equation}
\begin{split}
  P_{5;s,u;t}^{(0)}
  :={}&
    \mu_{u;t}^X\partial_u\mu_{u;t}^V
      \partial_s\mu_{s;t}^V
    +\mu_{u;t}^X\partial_u\partial_s
      \Gamma^{VV}(u,s;t)\\
  &+\partial_u\mu_{u;t}^V
      \partial_s\Gamma^{XV}(u,s;t)
    +\partial_s\mu_{s;t}^V
      \left.\partial_v\Gamma^{XV}(u,v;t)\right|_{v=u+}.
\end{split}
\label{eq:P50}
\end{equation}
\begin{equation}
\begin{split}
  P_{6;s;t}^{(0)}
  :={}&M_{s;t}^{[2]}\partial_s\mu_{s;t}^V
    +2\mu_{s;t}^X
      \left.\partial_u\Gamma^{XV}(s,u;t)\right|_{u=s+}.
\end{split}
\label{eq:P60}
\end{equation}
Indeed, these expressions are respectively the Wick expansions of
\begin{align*}
  &\widetilde E_t[X_s^{[1]}(X_u^{[1]})^2],
  &&\widetilde E_t[X_s^{[1]}X_u^{[1]}dV_u]/du,\\
  &\widetilde E_t[(X_s^{[1]})^3],
  &&\widetilde E_t[(X_u^{[1]})^2dV_s]/ds,\\
  &\widetilde E_t[X_u^{[1]}dV_u dV_s]/(du\,ds),
  &&\widetilde E_t[(X_s^{[1]})^2dV_s]/ds.
\end{align*}
Substitution of \eqref{eq:X2-explicit} into \eqref{eq:X3-explicit}, followed
by Fubini's theorem, therefore gives
\begin{equation}
\begin{split}
  m_t^{[3]}
  ={}&3\int_0^t\int_0^s
    \Phi(s,t)\Phi(u,s)
    \alpha''(X_s^{[0]})\alpha''(X_u^{[0]})
    P_{1;s,u;t}^{(0)}\,du\,ds\\
  &+6\int_0^t\int_0^s
    \Phi(s,t)\Phi(u,s)
    \alpha''(X_s^{[0]})\beta'(X_u^{[0]})
    P_{2;s,u;t}^{(0)}\,du\,ds\\
  &+\int_0^t\Phi(s,t)\alpha'''(X_s^{[0]})
    P_{3;s;t}^{(0)}\,ds\\
  &+3\int_0^t\int_0^s
    \Phi(s,t)\Phi(u,s)
    \beta'(X_s^{[0]})\alpha''(X_u^{[0]})
    P_{4;s,u;t}^{(0)}\,du\,ds\\
  &+6\int_0^t\int_0^s
    \Phi(s,t)\Phi(u,s)
    \beta'(X_s^{[0]})\beta'(X_u^{[0]})
    P_{5;s,u;t}^{(0)}\,du\,ds\\
  &+3\int_0^t\Phi(s,t)\beta''(X_s^{[0]})
    P_{6;s;t}^{(0)}\,ds.
\end{split}
\label{eq:m30-Wick}
\end{equation}

\subsection{Successive terminal-time differentiations}

Differentiating \eqref{eq:P10}--\eqref{eq:P60}, and denoting each new
observation coefficient in the same way as in Section~3, gives
\begin{align*}
  P_{1;s,u;t}^{(1)}
  :={}&\Gamma^{XX}(t,s;t)M_{u;t}^{[2]}\notag\\
  &+2\Gamma^{XX}(t,u;t)
    \{\mu_{s;t}^X\mu_{u;t}^X+\Gamma^{XX}(s,u;t)\},\\
  P_{2;s,u;t}^{(1)}
  :={}&
    \{\mu_{s;t}^X\Gamma^{XX}(t,u;t)
      +\mu_{u;t}^X\Gamma^{XX}(t,s;t)\}
      \partial_u\mu_{u;t}^V\notag\\
  &+\{\mu_{s;t}^X\mu_{u;t}^X+\Gamma^{XX}(s,u;t)\}
      \partial_u\Gamma^{XV}(t,u;t)\notag\\
  &+\Gamma^{XX}(t,u;t)\partial_u\Gamma^{XV}(s,u;t)
    +\Gamma^{XX}(t,s;t)
      \left.\partial_v\Gamma^{XV}(u,v;t)\right|_{v=u+},\\
  P_{3;s;t}^{(1)}
  :={}&3\Gamma^{XX}(t,s;t)M_{s;t}^{[2]},\\
  P_{4;s,u;t}^{(1)}
  :={}&2\mu_{u;t}^X\Gamma^{XX}(t,u;t)
      \partial_s\mu_{s;t}^V\notag\\
  &+M_{u;t}^{[2]}\partial_s\Gamma^{XV}(t,s;t)
    +2\Gamma^{XX}(t,u;t)\partial_s\Gamma^{XV}(u,s;t),\\
  P_{5;s,u;t}^{(1)}
  :={}&\Gamma^{XX}(t,u;t)
    \{\partial_u\mu_{u;t}^V\partial_s\mu_{s;t}^V
      +\partial_u\partial_s\Gamma^{VV}(u,s;t)\}\notag\\
  &+\partial_u\Gamma^{XV}(t,u;t)
    \{\mu_{u;t}^X\partial_s\mu_{s;t}^V
      +\partial_s\Gamma^{XV}(u,s;t)\}\notag\\
  &+\partial_s\Gamma^{XV}(t,s;t)
    \left\{
      \mu_{u;t}^X\partial_u\mu_{u;t}^V
       +\left.\partial_v\Gamma^{XV}(u,v;t)\right|_{v=u+}
    \right\},\\
  P_{6;s;t}^{(1)}
  :={}&2\mu_{s;t}^X\Gamma^{XX}(t,s;t)
      \partial_s\mu_{s;t}^V\notag\\
  &+M_{s;t}^{[2]}\partial_s\Gamma^{XV}(t,s;t)
    +2\Gamma^{XX}(t,s;t)
      \left.\partial_u\Gamma^{XV}(s,u;t)\right|_{u=s+}.
\end{align*}

The next observation coefficients are
\begin{align*}
  P_{1;s,u;t}^{(2)}
  :={}&4\mu_{u;t}^X\Gamma^{XX}(t,s;t)
      \Gamma^{XX}(t,u;t)
    +2\mu_{s;t}^X\Gamma^{XX}(t,u;t)^2,\\
  P_{2;s,u;t}^{(2)}
  :={}&2\Bigl[
    \Gamma^{XX}(t,s;t)\Gamma^{XX}(t,u;t)
      \partial_u\mu_{u;t}^V\notag\\
  &\hspace{14mm}
    +\mu_{u;t}^X\Gamma^{XX}(t,s;t)
      \partial_u\Gamma^{XV}(t,u;t)\notag\\
  &\hspace{14mm}
    +\mu_{s;t}^X\Gamma^{XX}(t,u;t)
      \partial_u\Gamma^{XV}(t,u;t)
  \Bigr],\\
  P_{3;s;t}^{(2)}
  :={}&6\mu_{s;t}^X\Gamma^{XX}(t,s;t)^2,\\
  P_{4;s,u;t}^{(2)}
  :={}&2\Bigl[
    \Gamma^{XX}(t,u;t)^2\partial_s\mu_{s;t}^V
    +2\mu_{u;t}^X\Gamma^{XX}(t,u;t)
      \partial_s\Gamma^{XV}(t,s;t)
  \Bigr],\\
  P_{5;s,u;t}^{(2)}
  :={}&2\Bigl[
    \Gamma^{XX}(t,u;t)\partial_u\Gamma^{XV}(t,u;t)
      \partial_s\mu_{s;t}^V\notag\\
  &\hspace{14mm}
    +\Gamma^{XX}(t,u;t)\partial_s\Gamma^{XV}(t,s;t)
      \partial_u\mu_{u;t}^V\notag\\
  &\hspace{14mm}
    +\mu_{u;t}^X\partial_u\Gamma^{XV}(t,u;t)
      \partial_s\Gamma^{XV}(t,s;t)
  \Bigr],\\
  P_{6;s;t}^{(2)}
  :={}&2\Bigl[
    \Gamma^{XX}(t,s;t)^2\partial_s\mu_{s;t}^V
    +2\mu_{s;t}^X\Gamma^{XX}(t,s;t)
      \partial_s\Gamma^{XV}(t,s;t)
  \Bigr].
\end{align*}

Finally,
\begin{align*}
  P_1^{(3)}(s,u;t)
  :={}&6\Gamma^{XX}(t,s;t)\Gamma^{XX}(t,u;t)^2,\notag\\
  P_2^{(3)}(s,u;t)
  :={}&6\Gamma^{XX}(t,s;t)\Gamma^{XX}(t,u;t)
      \partial_u\Gamma^{XV}(t,u;t),\notag\\
  P_3^{(3)}(s;t)
  :={}&6\Gamma^{XX}(t,s;t)^3,\notag\\
  P_4^{(3)}(s,u;t)
  :={}&6\Gamma^{XX}(t,u;t)^2
      \partial_s\Gamma^{XV}(t,s;t),\notag\\
  P_5^{(3)}(s,u;t)
  :={}&6\Gamma^{XX}(t,u;t)
      \partial_u\Gamma^{XV}(t,u;t)
      \partial_s\Gamma^{XV}(t,s;t),\notag\\
  P_6^{(3)}(s;t)
  :={}&6\Gamma^{XX}(t,s;t)^2
      \partial_s\Gamma^{XV}(t,s;t).
\end{align*}

For \(j=1,\ldots,6\), direct application of the smoothing-mean and
smoothing-covariance formulas gives the following equations, where the dots
stand for the unchanged lower time arguments:
\begin{align*}
  d_tP_{j;\cdot;t}^{(0)}
  ={}&P_{j;\cdot;t}^{(1)}
    \frac{\epsilon h'(X_t^{[0]})}{\sigma(t)^2}
    \left\{
      dY_t^\epsilon
      -\epsilon h'(X_t^{[0]})\mu_t^X\,dt
    \right\},\\
  d_tP_{j;\cdot;t}^{(1)}
  ={}&\left\{\alpha'(X_t^{[0]})
      -\frac{\epsilon^2h'(X_t^{[0]})^2}{\sigma(t)^2}
        \Gamma^{XX}(t)\right\}P_{j;\cdot;t}^{(1)}\,dt\notag\\
  &+P_{j;\cdot;t}^{(2)}
    \frac{\epsilon h'(X_t^{[0]})}{\sigma(t)^2}
    \left\{
      dY_t^\epsilon
      -\epsilon h'(X_t^{[0]})\mu_t^X\,dt
    \right\},\\
  d_tP_{j;\cdot;t}^{(2)}
  ={}&2\left\{\alpha'(X_t^{[0]})
      -\frac{\epsilon^2h'(X_t^{[0]})^2}{\sigma(t)^2}
        \Gamma^{XX}(t)\right\}P_{j;\cdot;t}^{(2)}\,dt\notag\\
  &+P_j^{(3)}(\cdot;t)
    \frac{\epsilon h'(X_t^{[0]})}{\sigma(t)^2}
    \left\{
      dY_t^\epsilon
      -\epsilon h'(X_t^{[0]})\mu_t^X\,dt
    \right\},\\
  \frac{\partial P_j^{(3)}(\cdot;t)}{\partial t}
  ={}&3\left\{\alpha'(X_t^{[0]})
      -\frac{\epsilon^2h'(X_t^{[0]})^2}{\sigma(t)^2}
        \Gamma^{XX}(t)\right\}P_j^{(3)}(\cdot;t).
\end{align*}
For \(r=1,2\), let \(m_t^{[3,r]}\) be the random quantity obtained by
replacing every \(P_{j;\cdot;t}^{(0)}\) in \eqref{eq:m30-Wick} by
\(P_{j;\cdot;t}^{(r)}\).  Explicitly,
\begin{equation}
\begin{split}
  m_t^{[3,r]}
  :={}&3\int_0^t\int_0^s
    \Phi(s,t)\Phi(u,s)
    \alpha''(X_s^{[0]})\alpha''(X_u^{[0]})
    P_{1;s,u;t}^{(r)}\,du\,ds\\
  &+6\int_0^t\int_0^s
    \Phi(s,t)\Phi(u,s)
    \alpha''(X_s^{[0]})\beta'(X_u^{[0]})
    P_{2;s,u;t}^{(r)}\,du\,ds\\
  &+\int_0^t\Phi(s,t)\alpha'''(X_s^{[0]})
    P_{3;s;t}^{(r)}\,ds\\
  &+3\int_0^t\int_0^s
    \Phi(s,t)\Phi(u,s)
    \beta'(X_s^{[0]})\alpha''(X_u^{[0]})
    P_{4;s,u;t}^{(r)}\,du\,ds\\
  &+6\int_0^t\int_0^s
    \Phi(s,t)\Phi(u,s)
    \beta'(X_s^{[0]})\beta'(X_u^{[0]})
    P_{5;s,u;t}^{(r)}\,du\,ds\\
  &+3\int_0^t\Phi(s,t)\beta''(X_s^{[0]})
    P_{6;s;t}^{(r)}\,ds,
  \qquad r=1,2.
\end{split}
\label{eq:m3r-Wick}
\end{equation}
The deterministic quantity \(m^{[3,3]}(t)\) is defined by the same
right-hand side with every \(P_{j;\cdot;t}^{(r)}\) replaced by
\(P_j^{(3)}(\cdot;t)\).

Using Wick's formula for the endpoint cubic moments, It\^o's formula
applied to \eqref{eq:m30-Wick} and
\eqref{eq:m3r-Wick} gives the closed equations
\begin{align*}
  dm_t^{[3]}
  ={}&\Bigl[
    \alpha'(X_t^{[0]})m_t^{[3]}
    +3\alpha''(X_t^{[0]})
      \left\{
        \mu_t^X(m_t^{[2]}+2n_t^{[2]})
        +2m_t^{[2,1]}+2n_t^{[2,1]}
      \right\}\notag\\
  &\quad
    +\alpha'''(X_t^{[0]})
      \{(\mu_t^X)^3+3\mu_t^X\Gamma^{XX}(t)\}
  \Bigr]dt\notag\\
  &+m_t^{[3,1]}
    \frac{\epsilon h'(X_t^{[0]})}{\sigma(t)^2}
    \left\{
      dY_t^\epsilon
      -\epsilon h'(X_t^{[0]})\mu_t^X\,dt
    \right\},\\
  dm_t^{[3,1]}
  ={}&\Biggl[
    \left\{2\alpha'(X_t^{[0]})
      -\frac{\epsilon^2h'(X_t^{[0]})^2}{\sigma(t)^2}
        \Gamma^{XX}(t)\right\}m_t^{[3,1]}\notag\\
  &\quad
    +3\alpha''(X_t^{[0]})
      \Bigl\{
        2\mu_t^X(m_t^{[2,1]}+n_t^{[2,1]})
        +\Gamma^{XX}(t)(m_t^{[2]}+2n_t^{[2]})\notag\\
  &\hspace{39mm}
        +2m^{[2,2]}(t)+4n^{[2,2]}(t)
      \Bigr\}\notag\\
  &\quad
    +3\alpha'''(X_t^{[0]})\Gamma^{XX}(t)
      \{(\mu_t^X)^2+\Gamma^{XX}(t)\}\notag\\
  &\quad
    +3\beta(X_t^{[0]})
      \left\{
        \beta'(X_t^{[0]})(m_t^{[2]}+2n_t^{[2]})
        +\beta''(X_t^{[0]})M_{t;t}^{[2]}
      \right\}
  \Biggr]dt\notag\\
  &+m_t^{[3,2]}
    \frac{\epsilon h'(X_t^{[0]})}{\sigma(t)^2}
    \left\{
      dY_t^\epsilon
      -\epsilon h'(X_t^{[0]})\mu_t^X\,dt
    \right\},\\
  dm_t^{[3,2]}
  ={}&\Biggl[
    \left\{3\alpha'(X_t^{[0]})
      -\frac{2\epsilon^2h'(X_t^{[0]})^2}{\sigma(t)^2}
        \Gamma^{XX}(t)\right\}m_t^{[3,2]}\notag\\
  &\quad
    +3\alpha''(X_t^{[0]})
      \left\{
        4\Gamma^{XX}(t)(m_t^{[2,1]}+n_t^{[2,1]})
        +2\mu_t^X\{m^{[2,2]}(t)+2n^{[2,2]}(t)\}
      \right\}\notag\\
  &\quad
    +6\alpha'''(X_t^{[0]})\mu_t^X\Gamma^{XX}(t)^2\notag\\
  &\quad
    +6\beta(X_t^{[0]})
      \left\{
        2\beta'(X_t^{[0]})(m_t^{[2,1]}+n_t^{[2,1]})\right.\notag\\
  &\hspace{38mm}\left.
        +2\beta''(X_t^{[0]})M_{t;t}^{[2,1]}
      \right\}
  \Biggr]dt\notag\\
  &+m^{[3,3]}(t)
    \frac{\epsilon h'(X_t^{[0]})}{\sigma(t)^2}
    \left\{
      dY_t^\epsilon
      -\epsilon h'(X_t^{[0]})\mu_t^X\,dt
    \right\},\\
  \frac{dm^{[3,3]}(t)}{dt}
  ={}&
    \left\{4\alpha'(X_t^{[0]})
      -\frac{3\epsilon^2h'(X_t^{[0]})^2}{\sigma(t)^2}
        \Gamma^{XX}(t)\right\}m^{[3,3]}(t)\notag\\
  &\quad
    +18\alpha''(X_t^{[0]})\Gamma^{XX}(t)
      \{m^{[2,2]}(t)+2n^{[2,2]}(t)\}
    +6\alpha'''(X_t^{[0]})\Gamma^{XX}(t)^3\notag\\
  &\quad
    +18\beta(X_t^{[0]})\beta'(X_t^{[0]})
      \{m^{[2,2]}(t)+2n^{[2,2]}(t)\}\notag\\
  &\quad
    +18\beta(X_t^{[0]})\beta''(X_t^{[0]})M^{[2,2]}(t;t).
\end{align*}
\medskip
All four quantities start from zero.  Thus, \(m_t^{[3]}\) is obtained
from this finite closed system.

\section{The remaining likelihood-correction terms}

We next compute the two likelihood-correction terms.  Put
\begin{align*}
  M_{2,s;t}^{(1)}
  :={}&2\int_0^s\Phi(u,s)
  \left\{
    \alpha''(X_u^{[0]})M_{u;t}^{[2,1]}
    +\beta'(X_u^{[0]})N_{u;t}^{[2,1]}
  \right\}du,\\
  M_2^{(2)}(s;t)
  :={}&2\int_0^s\Phi(u,s)
  \left\{
    \alpha''(X_u^{[0]})M^{[2,2]}(u;t)
    +2\beta'(X_u^{[0]})N^{[2,2]}(u;t)
  \right\}du,\\
  M_2^{(3)}(s;t)
  :={}&0.
\end{align*}

For \(r=1,2\), define the random quantities
\begin{align*}
  M_{12,s;t}^{(r)}
  :={}&\int_0^s\Phi(u,s)
  \left\{
    \alpha''(X_u^{[0]})P_{1;s,u;t}^{(r)}
    +2\beta'(X_u^{[0]})P_{2;s,u;t}^{(r)}
  \right\}du,
  \qquad r=1,2.
\end{align*}
The corresponding deterministic quantity is
\begin{align*}
  M_{12}^{(3)}(s;t)
  :={}&\int_0^s\Phi(u,s)
  \left\{
    \alpha''(X_u^{[0]})P_1^{(3)}(s,u;t)
    +2\beta'(X_u^{[0]})P_2^{(3)}(s,u;t)
  \right\}du.
\end{align*}

Using
\begin{align*}
  P_{3;s;t}^{(1)}
  =3\Gamma^{XX}(t,s;t)M_{s;t}^{[2]},
\end{align*}
the Wick expansions of the two covariances in
\eqref{eq:r3-definition} give
\begin{equation}
\begin{split}
  r_t^{[3]}
  ={}&\frac12\int_0^t
    \frac{h'(X_s^{[0]})}{\sigma(s)^2}
    M_{2,s;t}^{(1)}\,dY_s^\epsilon
  +\int_0^t
    \frac{h''(X_s^{[0]})}{\sigma(s)^2}
    M_{s;t}^{[2,1]}\,dY_s^\epsilon\\
  &-\frac{\epsilon}{2}\int_0^t
    \frac{h'(X_s^{[0]})^2}{\sigma(s)^2}
    M_{12,s;t}^{(1)}\,ds\\
  &-\frac{\epsilon}{2}\int_0^t
    \frac{h'(X_s^{[0]})h''(X_s^{[0]})}{\sigma(s)^2}
    P_{3;s;t}^{(1)}\,ds.
\end{split}
\label{eq:r3-expanded}
\end{equation}
To differentiate this expression without separating the four integrals
again, define
\begin{equation}
\begin{split}
  r_t^{[3,1]}
  :={}&\frac12\int_0^t
    \frac{h'(X_s^{[0]})}{\sigma(s)^2}
    M_2^{(2)}(s;t)\,dY_s^\epsilon
  +\int_0^t
    \frac{h''(X_s^{[0]})}{\sigma(s)^2}
    M^{[2,2]}(s;t)\,dY_s^\epsilon\\
  &-\frac{\epsilon}{2}\int_0^t
    \frac{h'(X_s^{[0]})^2}{\sigma(s)^2}
    M_{12,s;t}^{(2)}\,ds\\
  &-\frac{\epsilon}{2}\int_0^t
    \frac{h'(X_s^{[0]})h''(X_s^{[0]})}{\sigma(s)^2}
    P_{3;s;t}^{(2)}\,ds.
\end{split}
\label{eq:r31-expanded}
\end{equation}
\begin{equation}
\begin{split}
  r^{[3,2]}(t)
  :={}&-\frac{\epsilon}{2}\int_0^t
    \frac{h'(X_s^{[0]})^2}{\sigma(s)^2}
    M_{12}^{(3)}(s;t)\,ds\\
  &-\frac{\epsilon}{2}\int_0^t
    \frac{h'(X_s^{[0]})h''(X_s^{[0]})}{\sigma(s)^2}
    P_3^{(3)}(s;t)\,ds.
\end{split}
\label{eq:r32-expanded}
\end{equation}

The smoothing formulas give
\begin{align*}
  d_tM_{2,s;t}^{(1)}
  ={}&\left\{\alpha'(X_t^{[0]})
      -\frac{\epsilon^2h'(X_t^{[0]})^2}{\sigma(t)^2}
        \Gamma^{XX}(t)\right\}M_{2,s;t}^{(1)}\,dt\notag\\
  &+M_2^{(2)}(s;t)
    \frac{\epsilon h'(X_t^{[0]})}{\sigma(t)^2}
    \left\{
      dY_t^\epsilon
      -\epsilon h'(X_t^{[0]})\mu_t^X\,dt
    \right\},\notag\\
  \frac{\partial M_2^{(2)}(s;t)}{\partial t}
  ={}&2\left\{\alpha'(X_t^{[0]})
      -\frac{\epsilon^2h'(X_t^{[0]})^2}{\sigma(t)^2}
        \Gamma^{XX}(t)\right\}M_2^{(2)}(s;t),\\
  d_tM_{12,s;t}^{(1)}
  ={}&\left\{\alpha'(X_t^{[0]})
      -\frac{\epsilon^2h'(X_t^{[0]})^2}{\sigma(t)^2}
        \Gamma^{XX}(t)\right\}M_{12,s;t}^{(1)}\,dt\notag\\
  &+M_{12,s;t}^{(2)}
    \frac{\epsilon h'(X_t^{[0]})}{\sigma(t)^2}
    \left\{
      dY_t^\epsilon
      -\epsilon h'(X_t^{[0]})\mu_t^X\,dt
    \right\},\notag\\
  d_tM_{12,s;t}^{(2)}
  ={}&2\left\{\alpha'(X_t^{[0]})
      -\frac{\epsilon^2h'(X_t^{[0]})^2}{\sigma(t)^2}
        \Gamma^{XX}(t)\right\}M_{12,s;t}^{(2)}\,dt\notag\\
  &+M_{12}^{(3)}(s;t)
    \frac{\epsilon h'(X_t^{[0]})}{\sigma(t)^2}
    \left\{
      dY_t^\epsilon
      -\epsilon h'(X_t^{[0]})\mu_t^X\,dt
    \right\},\notag\\
  \frac{\partial M_{12}^{(3)}(s;t)}{\partial t}
  ={}&3\left\{\alpha'(X_t^{[0]})
      -\frac{\epsilon^2h'(X_t^{[0]})^2}{\sigma(t)^2}
        \Gamma^{XX}(t)\right\}M_{12}^{(3)}(s;t).
\end{align*}
Applying It\^o's formula to
\eqref{eq:r3-expanded}--\eqref{eq:r32-expanded} now yields
\begin{equation}
\begin{split}
  dr_t^{[3]}
  ={}&\Biggl[
    \left\{\alpha'(X_t^{[0]})
      -\frac{\epsilon^2h'(X_t^{[0]})^2}{\sigma(t)^2}
        \Gamma^{XX}(t)\right\}r_t^{[3]}\\
  &\quad
    -\frac{\epsilon h'(X_t^{[0]})\Gamma^{XX}(t)}{2\sigma(t)^2}
      \left\{
        h'(X_t^{[0]})(m_t^{[2]}+2n_t^{[2]})
        +h''(X_t^{[0]})M_{t;t}^{[2]}
      \right\}
  \Biggr]dt\\
  &+\frac{1}{\sigma(t)^2}
    \left\{
      h'(X_t^{[0]})(m_t^{[2,1]}+n_t^{[2,1]})
      +h''(X_t^{[0]})M_{t;t}^{[2,1]}
    \right\}\\
  &\qquad\times
    \left\{
      dY_t^\epsilon
      -\epsilon h'(X_t^{[0]})\mu_t^X\,dt
    \right\}\\
  &+r_t^{[3,1]}
    \frac{\epsilon h'(X_t^{[0]})}{\sigma(t)^2}
    \left\{
      dY_t^\epsilon
      -\epsilon h'(X_t^{[0]})\mu_t^X\,dt
    \right\}.
\end{split}
\label{eq:r3-evolution}
\end{equation}
\begin{equation}
\begin{split}
  dr_t^{[3,1]}
  ={}&\Biggl[
    2\left\{\alpha'(X_t^{[0]})
      -\frac{\epsilon^2h'(X_t^{[0]})^2}{\sigma(t)^2}
        \Gamma^{XX}(t)\right\}r_t^{[3,1]}\\
  &\quad
    -\frac{2\epsilon h'(X_t^{[0]})\Gamma^{XX}(t)}{\sigma(t)^2}
      \left\{
        h'(X_t^{[0]})(m_t^{[2,1]}+n_t^{[2,1]})
        +h''(X_t^{[0]})M_{t;t}^{[2,1]}
      \right\}
  \Biggr]dt\\
  &+\frac{1}{\sigma(t)^2}
    \left\{
      h'(X_t^{[0]})\{m^{[2,2]}(t)+2n^{[2,2]}(t)\}
      +h''(X_t^{[0]})M^{[2,2]}(t;t)
    \right\}\\
  &\qquad\times
    \left\{
      dY_t^\epsilon
      -\epsilon h'(X_t^{[0]})\mu_t^X\,dt
    \right\}\\
  &+r^{[3,2]}(t)
    \frac{\epsilon h'(X_t^{[0]})}{\sigma(t)^2}
    \left\{
      dY_t^\epsilon
      -\epsilon h'(X_t^{[0]})\mu_t^X\,dt
    \right\}.
\end{split}
\label{eq:r31-evolution}
\end{equation}
\begin{equation}
\begin{split}
  \frac{dr^{[3,2]}(t)}{dt}
  ={}&
    3\left\{\alpha'(X_t^{[0]})
      -\frac{\epsilon^2h'(X_t^{[0]})^2}{\sigma(t)^2}
      \Gamma^{XX}(t)\right\}r^{[3,2]}(t)\\
  &\quad
    -\frac{3\epsilon h'(X_t^{[0]})\Gamma^{XX}(t)}{\sigma(t)^2}\\
  &\qquad\times
      \left\{
        h'(X_t^{[0]})\{m^{[2,2]}(t)+2n^{[2,2]}(t)\}
        +h''(X_t^{[0]})M^{[2,2]}(t;t)
      \right\}.
\end{split}
\label{eq:r32-evolution}
\end{equation}
\medskip
All three quantities start from zero.  Together with the second-order system
from Section~3, equations \eqref{eq:r3-evolution}--\eqref{eq:r32-evolution}
form a closed system for \(r_t^{[3]}\).

\newpage
\section{Collected equations for the third-order approximation}

For reference, the complete third-order approximation and the equations
needed to compute its coefficients are collected below.  Each group is
displayed separately to make the recursive structure explicit.

\begin{enumerate}
\item \textbf{Third-order approximation.}

\begin{center}
\fbox{\begin{minipage}{0.92\linewidth}
\begin{align*}
  E[X_t^\epsilon\mid\mathcal Y_t^\epsilon]
  ={}&X_t^{[0]}+\epsilon\mu_t^X
  +\epsilon^2\left\{\frac12m_t^{[2]}+n_t^{[2]}\right\}\notag\\
  &+\epsilon^3\left\{\frac16m_t^{[3]}+r_t^{[3]}\right\}
  +O_P^T(\epsilon^4).
\end{align*}
\end{minipage}}
\end{center}

\item \textbf{Equations for $\mu_t^X$.}

\begin{center}
\fbox{\begin{minipage}{0.92\linewidth}
\begin{align*}
  d\mu_t^X
  ={}&\alpha'(X_t^{[0]})\mu_t^X\,dt
  +\epsilon\Gamma^{XX}(t)
    \frac{h'(X_t^{[0]})}{\sigma(t)^2}
    \left\{dY_t^\epsilon
      -\epsilon h'(X_t^{[0]})\mu_t^X\,dt\right\},\\
  \frac{d\Gamma^{XX}(t)}{dt}
  ={}&2\alpha'(X_t^{[0]})\Gamma^{XX}(t)
    +\beta(X_t^{[0]})^2
    -\frac{\epsilon^2h'(X_t^{[0]})^2}{\sigma(t)^2}
      \Gamma^{XX}(t)^2.
\end{align*}
The initial values are $\mu_0^X=0$ and $\Gamma^{XX}(0)=0$.
\end{minipage}}
\end{center}

\item \textbf{Equations for $m_t^{[2]}$.}

\begin{center}
\fbox{\begin{minipage}{0.92\linewidth}
\begin{align*}
  dm_t^{[2]}
  ={}&\left\{\alpha'(X_t^{[0]})m_t^{[2]}
    +\alpha''(X_t^{[0]})M_{t;t}^{[2]}\right\}dt\\
  &+2m_t^{[2,1]}
    \frac{\epsilon h'(X_t^{[0]})}{\sigma(t)^2}
    \left\{dY_t^\epsilon
      -\epsilon h'(X_t^{[0]})\mu_t^X\,dt\right\}.
\end{align*}
\end{minipage}}
\end{center}

\begin{center}
\fbox{\begin{minipage}{0.92\linewidth}
\begin{align*}
  dm_t^{[2,1]}
  ={}&\left[\alpha''(X_t^{[0]})\mu_t^X\Gamma^{XX}(t)
    +\left\{\alpha'(X_t^{[0]})
      -\frac{\epsilon^2h'(X_t^{[0]})^2}{\sigma(t)^2}
       \Gamma^{XX}(t)\right\}m_t^{[2,1]}\right]dt\\
  &+m^{[2,2]}(t)
    \frac{\epsilon h'(X_t^{[0]})}{\sigma(t)^2}
    \left\{dY_t^\epsilon
      -\epsilon h'(X_t^{[0]})\mu_t^X\,dt\right\}.
\end{align*}
\end{minipage}}
\end{center}

\begin{center}
\fbox{\begin{minipage}{0.92\linewidth}
\begin{align*}
  \frac{dm^{[2,2]}(t)}{dt}
  ={}&\left\{3\alpha'(X_t^{[0]})
    -\frac{2\epsilon^2h'(X_t^{[0]})^2}{\sigma(t)^2}
      \Gamma^{XX}(t)\right\}m^{[2,2]}(t)\\
  &+\alpha''(X_t^{[0]})\Gamma^{XX}(t)^2.
\end{align*}
Here $M_{t;t}^{[2]}=(\mu_t^X)^2+\Gamma^{XX}(t)$, and all three
quantities start from zero.
\end{minipage}}
\end{center}

\item \textbf{Equations for $n_t^{[2]}$.}

\begin{center}
\fbox{\begin{minipage}{0.92\linewidth}
\begin{align*}
  dn_t^{[2]}
  ={}&\alpha'(X_t^{[0]})n_t^{[2]}\,dt
    +n_t^{[2,1]}
      \frac{\epsilon h'(X_t^{[0]})}{\sigma(t)^2}
      \left\{dY_t^\epsilon
        -\epsilon h'(X_t^{[0]})\mu_t^X\,dt\right\}.
\end{align*}
\end{minipage}}
\end{center}

\begin{center}
\fbox{\begin{minipage}{0.92\linewidth}
\begin{align*}
  dn_t^{[2,1]}
  ={}&\left[\left\{2\alpha'(X_t^{[0]})
      -\frac{\epsilon^2h'(X_t^{[0]})^2}{\sigma(t)^2}
        \Gamma^{XX}(t)\right\}n_t^{[2,1]}
      +\beta'(X_t^{[0]})\beta(X_t^{[0]})\mu_t^X\right]dt\\
  &+2n^{[2,2]}(t)
    \frac{\epsilon h'(X_t^{[0]})}{\sigma(t)^2}
    \left\{dY_t^\epsilon
      -\epsilon h'(X_t^{[0]})\mu_t^X\,dt\right\}.
\end{align*}
\end{minipage}}
\end{center}

\begin{center}
\fbox{\begin{minipage}{0.92\linewidth}
\begin{align*}
  \frac{dn^{[2,2]}(t)}{dt}
  ={}&\left\{3\alpha'(X_t^{[0]})
      -\frac{2\epsilon^2h'(X_t^{[0]})^2}{\sigma(t)^2}
        \Gamma^{XX}(t)\right\}n^{[2,2]}(t)\\
  &+\beta'(X_t^{[0]})\beta(X_t^{[0]})\Gamma^{XX}(t).
\end{align*}
All three quantities start from zero.
\end{minipage}}
\end{center}

\item \textbf{Equations for $m_t^{[3]}$.}

\begin{center}
\fbox{\begin{minipage}{0.92\linewidth}
\begin{align*}
  dm_t^{[3]}
  ={}&\Biggl[\alpha'(X_t^{[0]})m_t^{[3]}
    +3\alpha''(X_t^{[0]})
      \{\mu_t^X(m_t^{[2]}+2n_t^{[2]})
        +2m_t^{[2,1]}+2n_t^{[2,1]}\}\\
  &\quad+\alpha'''(X_t^{[0]})
      \{(\mu_t^X)^3+3\mu_t^X\Gamma^{XX}(t)\}\Biggr]dt\\
  &+m_t^{[3,1]}
    \frac{\epsilon h'(X_t^{[0]})}{\sigma(t)^2}
    \left\{dY_t^\epsilon
      -\epsilon h'(X_t^{[0]})\mu_t^X\,dt\right\}.
\end{align*}
\end{minipage}}
\end{center}

\begin{center}
\fbox{\begin{minipage}{0.92\linewidth}
\begin{align*}
  dm_t^{[3,1]}
  ={}&\Biggl[\left\{2\alpha'(X_t^{[0]})
      -\frac{\epsilon^2h'(X_t^{[0]})^2}{\sigma(t)^2}
        \Gamma^{XX}(t)\right\}m_t^{[3,1]}\\
  &\quad+3\alpha''(X_t^{[0]})
    \Bigl\{2\mu_t^X(m_t^{[2,1]}+n_t^{[2,1]})
      +\Gamma^{XX}(t)(m_t^{[2]}+2n_t^{[2]})\\
  &\hspace{38mm}
      +2m^{[2,2]}(t)+4n^{[2,2]}(t)\Bigr\}\\
  &\quad+3\alpha'''(X_t^{[0]})\Gamma^{XX}(t)
      \{(\mu_t^X)^2+\Gamma^{XX}(t)\}\\
  &\quad+3\beta(X_t^{[0]})
      \{\beta'(X_t^{[0]})(m_t^{[2]}+2n_t^{[2]})
        +\beta''(X_t^{[0]})M_{t;t}^{[2]}\}\Biggr]dt\\
  &+m_t^{[3,2]}
    \frac{\epsilon h'(X_t^{[0]})}{\sigma(t)^2}
    \left\{dY_t^\epsilon
      -\epsilon h'(X_t^{[0]})\mu_t^X\,dt\right\}.
\end{align*}
\end{minipage}}
\end{center}

\begin{center}
\fbox{\begin{minipage}{0.92\linewidth}
\begin{align*}
  dm_t^{[3,2]}
  ={}&\Biggl[\left\{3\alpha'(X_t^{[0]})
      -\frac{2\epsilon^2h'(X_t^{[0]})^2}{\sigma(t)^2}
        \Gamma^{XX}(t)\right\}m_t^{[3,2]}\\
  &\quad+3\alpha''(X_t^{[0]})
    \Bigl\{4\Gamma^{XX}(t)(m_t^{[2,1]}+n_t^{[2,1]})\\
  &\hspace{38mm}
      +2\mu_t^X(m^{[2,2]}(t)+2n^{[2,2]}(t))\Bigr\}\\
  &\quad+6\alpha'''(X_t^{[0]})\mu_t^X\Gamma^{XX}(t)^2\\
  &\quad+6\beta(X_t^{[0]})
      \{2\beta'(X_t^{[0]})(m_t^{[2,1]}+n_t^{[2,1]})
        +2\beta''(X_t^{[0]})M_{t;t}^{[2,1]}\}\Biggr]dt\\
  &+m^{[3,3]}(t)
    \frac{\epsilon h'(X_t^{[0]})}{\sigma(t)^2}
    \left\{dY_t^\epsilon
      -\epsilon h'(X_t^{[0]})\mu_t^X\,dt\right\}.
\end{align*}
\end{minipage}}
\end{center}

\begin{center}
\fbox{\begin{minipage}{0.92\linewidth}
\begin{align*}
  \frac{dm^{[3,3]}(t)}{dt}
  ={}&\left\{4\alpha'(X_t^{[0]})
      -\frac{3\epsilon^2h'(X_t^{[0]})^2}{\sigma(t)^2}
        \Gamma^{XX}(t)\right\}m^{[3,3]}(t)\\
  &+18\alpha''(X_t^{[0]})\Gamma^{XX}(t)
      \{m^{[2,2]}(t)+2n^{[2,2]}(t)\}
    +6\alpha'''(X_t^{[0]})\Gamma^{XX}(t)^3\\
  &+18\beta(X_t^{[0]})\beta'(X_t^{[0]})
      \{m^{[2,2]}(t)+2n^{[2,2]}(t)\}\\
  &+18\beta(X_t^{[0]})\beta''(X_t^{[0]})M^{[2,2]}(t;t).
\end{align*}
All four quantities start from zero.
\end{minipage}}
\end{center}

\item \textbf{Equations for $r_t^{[3]}$.}

\begin{center}
\fbox{\begin{minipage}{0.92\linewidth}
\begin{align*}
  dr_t^{[3]}
  ={}&\Biggl[\left\{\alpha'(X_t^{[0]})
      -\frac{\epsilon^2h'(X_t^{[0]})^2}{\sigma(t)^2}
        \Gamma^{XX}(t)\right\}r_t^{[3]}\\
  &\quad-\frac{\epsilon h'(X_t^{[0]})\Gamma^{XX}(t)}{2\sigma(t)^2}
    \Bigl\{h'(X_t^{[0]})(m_t^{[2]}+2n_t^{[2]})\\
  &\hspace{43mm}
      +h''(X_t^{[0]})M_{t;t}^{[2]}\Bigr\}\Biggr]dt\\
  &+\frac{1}{\sigma(t)^2}
    \Bigl\{h'(X_t^{[0]})(m_t^{[2,1]}+n_t^{[2,1]})
      +h''(X_t^{[0]})M_{t;t}^{[2,1]}\Bigr\}\\
  &\qquad\times
    \left\{dY_t^\epsilon
      -\epsilon h'(X_t^{[0]})\mu_t^X\,dt\right\}\\
  &+r_t^{[3,1]}
    \frac{\epsilon h'(X_t^{[0]})}{\sigma(t)^2}
    \left\{dY_t^\epsilon
      -\epsilon h'(X_t^{[0]})\mu_t^X\,dt\right\}.
\end{align*}
\end{minipage}}
\end{center}

\begin{center}
\fbox{\begin{minipage}{0.92\linewidth}
\begin{align*}
  dr_t^{[3,1]}
  ={}&\Biggl[2\left\{\alpha'(X_t^{[0]})
      -\frac{\epsilon^2h'(X_t^{[0]})^2}{\sigma(t)^2}
        \Gamma^{XX}(t)\right\}r_t^{[3,1]}\\
  &\quad-\frac{2\epsilon h'(X_t^{[0]})\Gamma^{XX}(t)}{\sigma(t)^2}
    \Bigl\{h'(X_t^{[0]})(m_t^{[2,1]}+n_t^{[2,1]})\\
  &\hspace{43mm}
      +h''(X_t^{[0]})M_{t;t}^{[2,1]}\Bigr\}\Biggr]dt\\
  &+\frac{1}{\sigma(t)^2}
    \Bigl\{h'(X_t^{[0]})(m^{[2,2]}(t)+2n^{[2,2]}(t))
      +h''(X_t^{[0]})M^{[2,2]}(t;t)\Bigr\}\\
  &\qquad\times
    \left\{dY_t^\epsilon
      -\epsilon h'(X_t^{[0]})\mu_t^X\,dt\right\}\\
  &+r^{[3,2]}(t)
    \frac{\epsilon h'(X_t^{[0]})}{\sigma(t)^2}
    \left\{dY_t^\epsilon
      -\epsilon h'(X_t^{[0]})\mu_t^X\,dt\right\}.
\end{align*}
\end{minipage}}
\end{center}

\begin{center}
\fbox{\begin{minipage}{0.92\linewidth}
\begin{align*}
  \frac{dr^{[3,2]}(t)}{dt}
  ={}&3\left\{\alpha'(X_t^{[0]})
      -\frac{\epsilon^2h'(X_t^{[0]})^2}{\sigma(t)^2}
        \Gamma^{XX}(t)\right\}r^{[3,2]}(t)\\
  &-\frac{3\epsilon h'(X_t^{[0]})\Gamma^{XX}(t)}{\sigma(t)^2}
    \Bigl\{h'(X_t^{[0]})(m^{[2,2]}(t)+2n^{[2,2]}(t))\\
  &\hspace{43mm}
      +h''(X_t^{[0]})M^{[2,2]}(t;t)\Bigr\}.
\end{align*}
All three quantities start from zero.
\end{minipage}}
\end{center}
\end{enumerate}
\endgroup

\end{document}